\title[Rokhlin property for compact quantum groups]{ The spatial Rokhlin property for actions of compact quantum groups }
\author{ Sel\c{c}uk Barlak }
\address{Department of Mathematics and Computer Science \\
         University of Southern Denmark \\
         Campusvej 55 \\
         DK-5230 Odense M\\
         Denmark 
}
\email{barlak@imada.sdu.dk}
\author{ G\'{a}bor Szab\'{o} }
\address{Fraser Noble Building \\
         Institute of Mathematics \\ 
         University of Aberdeen \\ 
         Aberdeen AB24 3UE \\
         United Kingdom   
}
\email{ gabor.szabo@abdn.ac.uk }
\author{Christian Voigt}
\address{School of Mathematics and Statistics \\
         University of Glasgow \\
         15 University Gardens \\
         Glasgow G12 8QW \\
         United Kingdom 
}
\email{christian.voigt@glasgow.ac.uk}
\subjclass[2010]{Primary 46L55, 20G42; Secondary 46L35}
\numberwithin{equation}{section}
\begin{document}

\renewcommand\matrix[1]{\left(\begin{array}{*{10}{c}} #1 \end{array}\right)}  
\newcommand\set[1]{\left\{#1\right\}}  
\newcommand\mset[1]{\left\{\!\!\left\{#1\right\}\!\!\right\}}

\newcommand{\IA}[0]{\mathbb{A}} \newcommand{\IB}[0]{\mathbb{B}}
\newcommand{\IC}[0]{\mathbb{C}} \newcommand{\ID}[0]{\mathbb{D}}
\newcommand{\IE}[0]{\mathbb{E}} \newcommand{\IF}[0]{\mathbb{F}}
\newcommand{\IG}[0]{\mathbb{G}} \newcommand{\IH}[0]{\mathbb{H}}
\newcommand{\II}[0]{\mathbb{I}} \renewcommand{\IJ}[0]{\mathbb{J}}
\newcommand{\IK}[0]{\mathbb{K}} \newcommand{\IL}[0]{\mathbb{L}}
\newcommand{\IM}[0]{\mathbb{M}} \newcommand{\IN}[0]{\mathbb{N}}
\newcommand{\IO}[0]{\mathbb{O}} \newcommand{\IP}[0]{\mathbb{P}}
\newcommand{\IQ}[0]{\mathbb{Q}} \newcommand{\IR}[0]{\mathbb{R}}
\newcommand{\IS}[0]{\mathbb{S}} \newcommand{\IT}[0]{\mathbb{T}}
\newcommand{\IU}[0]{\mathbb{U}} \newcommand{\IV}[0]{\mathbb{V}}
\newcommand{\IW}[0]{\mathbb{W}} \newcommand{\IX}[0]{\mathbb{X}}
\newcommand{\IY}[0]{\mathbb{Y}} \newcommand{\IZ}[0]{\mathbb{Z}}

\newcommand{\CA}[0]{\mathcal{A}} \newcommand{\CB}[0]{\mathcal{B}}
\newcommand{\CC}[0]{\mathcal{C}} \newcommand{\CD}[0]{\mathcal{D}}
\newcommand{\CE}[0]{\mathcal{E}} \newcommand{\CF}[0]{\mathcal{F}}
\newcommand{\CG}[0]{\mathcal{G}} \newcommand{\CH}[0]{\mathcal{H}}
\newcommand{\CI}[0]{\mathcal{I}} \newcommand{\CJ}[0]{\mathcal{J}}
\newcommand{\CK}[0]{\mathcal{K}} \newcommand{\CL}[0]{\mathcal{L}}
\newcommand{\CM}[0]{\mathcal{M}} \newcommand{\CN}[0]{\mathcal{N}}
\newcommand{\CO}[0]{\mathcal{O}} \newcommand{\CP}[0]{\mathcal{P}}
\newcommand{\CQ}[0]{\mathcal{Q}} \newcommand{\CR}[0]{\mathcal{R}}
\newcommand{\CS}[0]{\mathcal{S}} \newcommand{\CT}[0]{\mathcal{T}}
\newcommand{\CU}[0]{\mathcal{U}} \newcommand{\CV}[0]{\mathcal{V}}
\newcommand{\CW}[0]{\mathcal{W}} \newcommand{\CX}[0]{\mathcal{X}}
\newcommand{\CY}[0]{\mathcal{Y}} \newcommand{\CZ}[0]{\mathcal{Z}}

\newcommand{\FA}[0]{\mathfrak{A}} \newcommand{\FB}[0]{\mathfrak{B}}
\newcommand{\FC}[0]{\mathfrak{C}} \newcommand{\FD}[0]{\mathfrak{D}}
\newcommand{\FE}[0]{\mathfrak{E}} \newcommand{\FF}[0]{\mathfrak{F}}
\newcommand{\FG}[0]{\mathfrak{G}} \newcommand{\FH}[0]{\mathfrak{H}}
\newcommand{\FI}[0]{\mathfrak{I}} \newcommand{\FJ}[0]{\mathfrak{J}}
\newcommand{\FK}[0]{\mathfrak{K}} \newcommand{\FL}[0]{\mathfrak{L}}
\newcommand{\FM}[0]{\mathfrak{M}} \newcommand{\FN}[0]{\mathfrak{N}}
\newcommand{\FO}[0]{\mathfrak{O}} \newcommand{\FP}[0]{\mathfrak{P}}
\newcommand{\FQ}[0]{\mathfrak{Q}} \newcommand{\FR}[0]{\mathfrak{R}}
\newcommand{\FS}[0]{\mathfrak{S}} \newcommand{\FT}[0]{\mathfrak{T}}
\newcommand{\FU}[0]{\mathfrak{U}} \newcommand{\FV}[0]{\mathfrak{V}}
\newcommand{\FW}[0]{\mathfrak{W}} \newcommand{\FX}[0]{\mathfrak{X}}
\newcommand{\FY}[0]{\mathfrak{Y}} \newcommand{\FZ}[0]{\mathfrak{Z}}

\newcommand{\Fa}[0]{\mathfrak{a}} \newcommand{\Fb}[0]{\mathfrak{b}}
\newcommand{\Fc}[0]{\mathfrak{c}} \newcommand{\Fd}[0]{\mathfrak{d}}
\newcommand{\Fe}[0]{\mathfrak{e}} \newcommand{\Ff}[0]{\mathfrak{f}}
\newcommand{\Fg}[0]{\mathfrak{g}} \newcommand{\Fh}[0]{\mathfrak{h}}
\newcommand{\Fi}[0]{\mathfrak{i}} \newcommand{\Fj}[0]{\mathfrak{j}}
\newcommand{\Fk}[0]{\mathfrak{k}} \newcommand{\Fl}[0]{\mathfrak{l}}
\newcommand{\Fm}[0]{\mathfrak{m}} \newcommand{\Fn}[0]{\mathfrak{n}}
\newcommand{\Fo}[0]{\mathfrak{o}} \newcommand{\Fp}[0]{\mathfrak{p}}
\newcommand{\Fq}[0]{\mathfrak{q}} \newcommand{\Fr}[0]{\mathfrak{r}}
\newcommand{\Fs}[0]{\mathfrak{s}} \newcommand{\Ft}[0]{\mathfrak{t}}
\newcommand{\Fu}[0]{\mathfrak{u}} \newcommand{\Fv}[0]{\mathfrak{v}}
\newcommand{\Fw}[0]{\mathfrak{w}} \newcommand{\Fx}[0]{\mathfrak{x}}
\newcommand{\Fy}[0]{\mathfrak{y}} \newcommand{\Fz}[0]{\mathfrak{z}}

\newcommand{\Ra}[0]{\Rightarrow}
\newcommand{\La}[0]{\Leftarrow}
\newcommand{\LRa}[0]{\Leftrightarrow}

\renewcommand{\phi}[0]{\varphi}
\newcommand{\eps}[0]{\varepsilon}

\newcommand{\quer}[0]{\overline}
\newcommand{\uber}[0]{\choose}
\newcommand{\ord}[0]{\operatorname{ord}}		
\newcommand{\GL}[0]{\operatorname{GL}}
\newcommand{\supp}[0]{\operatorname{supp}}	
\newcommand{\id}[0]{\operatorname{id}}		
\newcommand{\Sp}[0]{\operatorname{Sp}}		
\newcommand{\eins}[0]{\mathbf{1}}			
\newcommand{\diag}[0]{\operatorname{diag}}
\newcommand{\ind}[0]{\operatorname{ind}}
\newcommand{\auf}[1]{\quad\stackrel{#1}{\longrightarrow}\quad}
\newcommand{\hull}[0]{\operatorname{hull}}
\newcommand{\prim}[0]{\operatorname{Prim}}
\newcommand{\ad}[0]{\operatorname{Ad}}
\newcommand{\quot}[0]{\operatorname{Quot}}
\newcommand{\ext}[0]{\operatorname{Ext}}
\newcommand{\ev}[0]{\operatorname{ev}}
\newcommand{\fin}[0]{{\subset\!\!\!\subset}}
\newcommand{\diam}[0]{\operatorname{diam}}
\newcommand{\Hom}[0]{\operatorname{Hom}}
\newcommand{\Aut}[0]{\operatorname{Aut}}
\newcommand{\del}[0]{\partial}
\newcommand{\dimeins}[0]{\dim^{\!+1}}
\newcommand{\dimnuc}[0]{\dim_{\mathrm{nuc}}}
\newcommand{\dimnuceins}[0]{\dimnuc^{\!+1}}
\newcommand{\dr}[0]{\operatorname{dr}}
\newcommand{\dimrok}[0]{\dim_{\mathrm{Rok}}}
\newcommand{\dimrokeins}[0]{\dimrok^{\!+1}}
\newcommand{\dreins}[0]{\dr^{\!+1}}
\newcommand*\onto{\ensuremath{\joinrel\relbar\joinrel\twoheadrightarrow}} 
\newcommand*\into{\ensuremath{\lhook\joinrel\relbar\joinrel\rightarrow}}  
\newcommand{\im}[0]{\operatorname{im}}
\newcommand{\dst}[0]{\displaystyle}
\newcommand{\cstar}[0]{$\mathrm{C}^*$}
\newcommand{\ann}[0]{\operatorname{Ann}}
\newcommand{\dist}[0]{\operatorname{dist}}
\newcommand{\asdim}[0]{\operatorname{asdim}}
\newcommand{\asdimeins}[0]{\operatorname{asdim}^{\!+1}}
\newcommand{\amdim}[0]{\dim_{\mathrm{am}}}
\newcommand{\amdimeins}[0]{\amdim^{\!+1}}
\newcommand{\dimrokc}[0]{\dim_{\mathrm{Rok,c}}}
\newcommand{\dimrokceins}[0]{\dimrokc^{\!+1}}
\newcommand{\act}[0]{\operatorname{Act}}
\newcommand{\idlat}[0]{\operatorname{IdLat}}
\newcommand{\Cu}[0]{\operatorname{Cu}}
\newcommand{\Ost}[0]{\CO_\infty^{\mathrm{st}}}
\newcommand{\ue}[1]{{~\approx_{\mathrm{u},#1}}~}
\newcommand{\ueo}[0]{\approx_{\mathrm{u}}}

\newcommand*{\ket}{\rangle}
\newcommand*{\bra}{\langle}
\newcommand*{\red}{\mathsf{r}}
\renewcommand*{\max}{\mathsf{f}}
\newcommand*{\opp}{\mathsf{opp}}
\newcommand*{\cop}{\mathsf{cop}}
\newcommand*{\f}{\mathsf{f}}
\newcommand*{\yd}{\mathsf{YD}}
\newcommand*{\DD}{\mathsf{D}}
\newcommand*{\Rep}{\mathsf{Rep}}
\newcommand*{\Corep}{\mathsf{Corep}}
\newcommand*{\Irr}{\mathsf{Irr}}

\newtheorem{satz}{Satz}[section]		
\newtheorem{cor}[satz]{Corollary}
\newtheorem{lemma}[satz]{Lemma}
\newtheorem{prop}[satz]{Proposition}
\newtheorem{theorem}[satz]{Theorem}
\newtheorem*{theoreme}{Theorem}

\theoremstyle{definition}
\newtheorem{defi}[satz]{Definition}
\newtheorem*{defie}{Definition}
\newtheorem{defprop}[satz]{Definition \& Proposition}
\newtheorem{nota}[satz]{Notation}
\newtheorem*{notae}{Notation}
\newtheorem{rem}[satz]{Remark}
\newtheorem*{reme}{Remark}
\newtheorem{example}[satz]{Example}
\newtheorem{defnot}[satz]{Definition \& Notation}
\newtheorem{question}[satz]{Question}
\newtheorem*{questione}{Question}
\newtheorem{contruction}[satz]{Contruction}

\newenvironment{bew}{\begin{proof}[Proof]}{\end{proof}}

\begin{abstract}
We introduce the spatial Rokhlin property for actions of coexact compact quantum groups on \cstar-algebras, generalizing the Rokhlin 
property for both actions of classical compact groups and finite quantum groups. Two key ingredients in our approach are 
the concept of sequentially split $*$-homomorphisms, and the use of braided tensor products instead of ordinary tensor products. 

We show that various structure results carry over from the classical theory to this more general setting. 
In particular, we show that a number of \cstar-algebraic properties relevant to the classification program pass from the 
underlying \cstar-algebra of a Rokhlin action to both the crossed product and the fixed point algebra. 
Towards establishing a classification theory, we show that Rokhlin actions exhibit a rigidity property with respect to approximate 
unitary equivalence. 
Regarding duality theory, we introduce the notion of spatial approximate representability for actions of discrete quantum groups. 
The spatial Rokhlin property for actions of a coexact compact quantum group is shown to be dual to spatial approximate representability 
for actions of its dual discrete quantum group, and vice versa.
\end{abstract}

\maketitle


\tableofcontents

\setcounter{section}{-1}

\section{Introduction}

\noindent
The Rokhlin property for finite group actions on unital \cstar-algebras was introduced and studied by Izumi in \cite{Izumi04, Izumi04II}, building on earlier work of Herman-Jones \cite{HermanJones82} and Herman-Ocneanu \cite{HermanOcneanu84}. 
Since the very beginning it has proven to be a useful tool in the theory of finite group actions. The Rokhlin property was subsequently generalized by Hirshberg-Winter \cite{HirshbergWinter07} to the case of compact groups, and studied further by Gardella \cite{Gardella14}; see also \cite{GardellaSantiago15, GardellaSantiago16}. For finite quantum group actions, Kodaka-Teruya  introduced and studied the Rokhlin property and approximate representability in \cite{KodakaTeruya15}.

The established theory, which we shall now briefly summarize in the initial setting of finite group actions, has three particularly 
remarkable features:
The first is a multitude of permanence properties; it is known that many \cstar-algebraic properties pass from the 
coefficient \cstar-algebra to the crossed product and fixed point algebra. This was in part addressed by Izumi in \cite{Izumi04}, and 
studied more in depth by Osaka-Phillips \cite{OsakaPhillips12} and Santiago \cite{Santiago15}.
The second feature is rigidity with respect to approximate unitary equivalence; a result of Izumi \cite{Izumi04} asserts that two Rokhlin actions of a finite group on a separable, unital \cstar-algebra are conjugate via an approximately inner automorphism if and only if the two actions are pointwise approximately unitarily equivalent; see \cite{GardellaSantiago15} for the non-unital case and \cite{GardellaSantiago16} for the case of compact groups. As demonstrated by Izumi in \cite{Izumi04}, 
this rigidity property is useful for classifying Rokhlin actions on classifiable \cstar-algebras via $K$-theoretic invariants. 
The third feature is duality theory; a result of Izumi \cite{Izumi04} shows that an action of a finite abelian group on a separable, 
unital \cstar-algebra has the Rokhlin property if and only if the dual action is approximately representable, and vice versa. This
has been generalized to the non-unital case by Nawata \cite{Nawata16}, and to actions of compact abelian groups by the first two 
authors \cite{BarlakSzabo15}; see also \cite{GardellaPHD}.

In the present paper, we introduce and study the spatial Rokhlin property for actions of coexact compact quantum groups, generalising 
and unifying the work mentioned above. In particular, we carry over various structure results from the classical to the general 
case. Firstly, this allows us to remove all commutativity assumptions in the study of duality properties for Rokhlin actions. This 
is relevant even for classical group actions. Indeed, the Pontrjagin dual of a nonabelian group is no longer a group, but can be viewed 
as a quantum group. Accordingly, a natural way to fully incorporate nonabelian groups into the picture is to work in the setting of quantum groups from the very beginning. Secondly, it turns out that some results can be given quite short and transparent proofs in this more 
abstract setup, simpler than in previous accounts. Finally, our results are also of interest from 
the point of view of quantum group theory. Indeed, they provide examples of quantum group actions that either allow for classification, or 
the systematical analysis of structural properties of crossed product \cstar-algebras, in particular whether they fall within the scope of 
the Elliott program. 

Let us highlight two comparably new ingredients in our approach. The first is the notion of (equivariantly) sequentially 
split $*$-homomorphisms introduced by the first two authors in \cite{BarlakSzabo15}. It has already been demonstrated in \cite{BarlakSzabo15} that many structural results related to the Rokhlin property can be recast and conceptually proved in the language of sequentially split $*$-homomorphisms, and some new ones could be proved as well. The second ingredient is a purely quantum feature, namely the braided tensor product construction. This provides the correct substitute for tensor product actions in the classical theory, and it also gives a conceptual 
explanation of the fact that the central sequence algebra is no longer the right tool in the quantum setting. 
Being widely known in the algebraic theory of quantum groups, braided tensor products in the operator algebraic framework were 
first introduced and studied in \cite{NestVoigt2010}.

As already indicated above, for most of the paper we will assume that our quantum groups satisfy exactness/coexactness assumptions. 
This may appear surprising at first sight; it is essentially due to the fact that we have chosen to work in a reduced setting, that is, with 
reduced crossed products and minimal (braided) tensor products. 
We shall indicate at several points in the main text where precisely exactness enters. Our setup 
yields the strongest versions of conceivable definitions of Rokhlin actions and approximately 
representable actions, however, at the same time our examples are restricted to the amenable/coamenable case. 
On the other hand, the reduced setting matches best with the existing literature on quantum group actions, see for instance 
\cite{BaajSkandalis93}, \cite{BSV2003}, \cite{Timmermann08}, \cite{NestVoigt2010} and references therein.  
The necessary modifications to set up a full version of our theory are mainly of technical nature; we have refrained from 
carrying this out here.

Let us now explain how the paper is organized. In Section 1, we gather some preliminaries and background on quantum groups, 
including a review of Takesaki-Takai duality and braided tensor products. Section 2 deals with induced actions of discrete and compact 
quantum groups on sequence algebras. Already for classical compact groups
these actions typically fail to be continuous, and in the quantum setting this leads to a number of subtle issues. 
In Section 3, we define and study equivariantly sequentially split $*$-homomorphisms. We show that, as in the case of group actions, this notion behaves well with respect to crossed products and fixed point algebras. We also establish a general duality result for equivariantly sequentially split $*$-homomorphisms. In Section 4, we introduce the spatial Rokhlin property for actions of coexact compact quantum groups, 
and spatial approximate representability for actions of exact discrete quantum groups. We verify that various \cstar-algebraic properties pass 
to crossed products and fixed point algebras. Moreover, we show that the spatial Rokhlin property and spatial approximate representability 
are dual to each other. In Section 5, we present some steps towards a classification theory for actions with the spatial Rokhlin property. Among other things, we prove that two such $G$-actions on a \cstar-algebra $A$ are conjugate via an approximately inner automorphism if and only if the actions are approximately unitarily equivalent as $*$-homomorphisms from $A$ to $C^\red(G)\otimes A$. This generalizes a number
of previous such classification results, in particular those of Izumi \cite{Izumi04}, Gardella-Santiago \cite{GardellaSantiago16} and Kodaka-Teruya \cite{KodakaTeruya15}. 
In this section, we also generalize a $K$-theory formula for the fixed algebra of a Rokhlin action, first proved for certain finite group cases by Izumi \cite{Izumi04} and for compact group actions by the first two authors in \cite{BarlakSzabo15}. 
Finally, in Section 6 we present some examples of Rokhlin actions. In particular, we show that any coamenable compact quantum group
admits an essentially unique action with the spatial Rokhlin property on $ \CO_2 $.

The work presented here was initiated while the first two authors participated in the conference CSTAR at the University of Glasgow in September 2014. Substantial parts of this work were carried out during research visits of the authors to Oberwolfach in June 2015, of the first two authors at the Mittag-Leffler Institute from January to March 2016, and of the third author to the University of Southern Denmark in April 2016. The authors are grateful to all these institutions for their hospitality and support. 

The first author was supported by GIF Grant 1137-30.6/2011, ERC AdG 267079, SFB 878 `Groups, Geometry, and Actions' and the Villum Fonden project grant `Local and global structures of groups and their algebras' (2014-2018). 
The second author was supported by SFB 878 `Groups, Geometry, and Actions' and the Engineering and Physical Sciences Research Council Grant EP/N00874X/1 `Regularity and dimension for \cstar-algebras'. 
The third author was supported by the Engineering and Physical Sciences Research Council Grant
EP/L013916/1 and the Polish National Science Centre grant no.\ 2012/06/M/ST1/00169.

The authors would like to thank the referee for a thorough reading of the manuscript and for helpful suggestions.


\section{Preliminaries} \label{secqg}

In this preliminary section we collect some definitions and results 
from the theory of quantum groups and 
fix our notation. We will mainly follow the conventions in \cite{NestVoigt2010} as far as general quantum group theory is concerned. 
For more detailed information and background we refer to \cite{Woronowicz98}, \cite{MaesvanDaele98}, 
\cite{KustermansVaes2000}, \cite{KVVDW01}. 

Let us make some general remarks on the notation used throughout the paper.  We write $ \IL(\CE) $ for the space of adjointable
operators on a Hilbert $ A $-module, and $ \IK(\CE) $ denotes the space of compact operators. 
The closed linear span of a subset $ X $ of a Banach space is denoted by $ [X] $. 
If $ x,y $ are elements of a Banach space and $ \eps > 0 $ we write $ x =_\eps y $ if $ \|x - y \| < \eps $. 

Depending on the context, the symbol
$ \otimes $ denotes either the tensor product of Hilbert spaces, the minimal tensor product of \cstar-algebras, or the tensor product
of von Neumann algebras. We write $ \odot $ for algebraic tensor products. 

If $ A $ and $ B $ are \cstar-algebras then the flip map $ A \otimes B \rightarrow B \otimes A $ is denoted by $ \sigma $. 
That is, we have $ \sigma(a \otimes b) = b \otimes a $ for $ a \in A, b \in B $. 

If $ \CH $ is a Hilbert space we write $ \Sigma \in \IL(\CH \otimes \CH) $ for the flip map $ \Sigma (\xi \otimes \eta) = \eta \otimes \xi $. 
For operators on multiple tensor products we use the leg numbering notation. For instance, if $ W \in \IL(\CH \otimes \CH) $ 
is an operator on $ \CH \otimes \CH $, then $ W_{12} = W \otimes \id \in \IL(\CH \otimes \CH \otimes \CH) $ and 
$ W_{23} = \id \otimes W $. Moreover, $ W_{13} = \Sigma_{12} W_{23} \Sigma_{12} $. 

If $B$ is a \cstar-algebra we write $ \tilde{B} $ for the smallest unitarization of $B$.

\subsection{Quantum groups} 

Although we will only be interested in compact and discrete quantum groups, let us first recall a few 
defininitions and facts regarding general locally compact quantum groups. 

Let $ \phi $ be a normal, semifinite and faithful weight on a von Neumann algebra $ M $. We use the standard notation
\[
\mathcal{M}^+_\phi = \{ x \in M_+ \mid \phi(x) < \infty \}, \qquad \mathcal{N}_\phi = \{ x \in M \mid \phi(x^* x) < \infty \}
\]
and write $ M_*^+ $ for the space of positive normal linear functionals on $ M $. Assume that
$ \Delta: M \rightarrow M \otimes M $ is a normal unital $ * $-homomorphism. The weight $ \phi $ is called left invariant
with respect to $ \Delta $ if
\[
\phi((\omega \otimes \id)\Delta(x)) = \phi(x) \omega(\eins)
\]
for all $ x \in \mathcal{M}_\phi^+ $ and $ \omega \in M_*^+ $. Similarly one defines the notion of a right invariant weight.
\begin{defi} \label{defqg}
A locally compact quantum group $ G $ is given by a von Neumann algebra $ L^\infty(G) $ together with a normal unital $ * $-homomorphism
$ \Delta: L^\infty(G) \rightarrow L^\infty(G) \otimes L^\infty(G) $ satisfying the coassociativity relation
\[
(\Delta \otimes \id)\Delta = (\id \otimes \Delta)\Delta
\]
and normal semifinite faithful weights $ \phi $ and $ \psi $ on $ L^\infty(G) $ which are left and right invariant, respectively.
The weights $ \phi $ and $ \psi $ will also be referred to as Haar weights of $ G $. 
\end{defi}

\begin{rem} 
Our notation should help to make clear how ordinary locally compact groups can be viewed as quantum groups.
Indeed, if $ G $ is a locally compact group, then the algebra $ L^\infty(G) $ of essentially bounded measurable functions on $ G $ together with the comultiplication
$ \Delta: L^\infty(G) \rightarrow L^\infty(G) \otimes L^\infty(G) $ given by
\[
\Delta(f)(s,t) = f(st)
\]
defines a locally compact quantum group. The weights $ \phi $ and $ \psi $ are given in this case by left and right Haar measures, respectively. Of course, for a general locally compact quantum group $ G $ the notation $ L^\infty(G) $ is purely formal.
Similar remarks apply to the \cstar-algebras $ C^*_\max(G), C^*_\red(G) $ and $ C^\max_0(G), C^\red_0(G) $ associated to $ G $ that we discuss below. It is convenient to view all of them as different appearances of the quantum group $ G $. 
\end{rem} 

\begin{rem}[cf.\ \cite{KustermansVaes2000}] \label{Wremark}
Let $ G $ be a locally compact quantum group and let $ \Lambda: \mathcal{N}_\phi \rightarrow L^2(G) $ be the GNS-construction for the 
Haar weight $ \phi $. Throughout the paper we will only consider second countable quantum groups, that is, 
quantum groups for which $ L^2(G) $ is a separable Hilbert space. 

One obtains a unitary $ W_G = W $ on $ L^2(G) \otimes L^2(G) $ by
\[
W^*(\Lambda(x) \otimes \Lambda(y)) = (\Lambda \otimes \Lambda)(\Delta(y)(x \otimes \eins))
\]
for all $ x, y \in \mathcal{N}_\phi $. This unitary is multiplicative, which means that $ W $ satisfies the pentagonal equation
\[
W_{12} W_{13} W_{23} = W_{23} W_{12}.
\]
From $ W $ one can recover the von Neumann algebra $ L^\infty(G) $ as the strong closure of the algebra
$ (\id \otimes \IL(L^2(G))_*)(W) $, where $ \IL(L^2(G))_* $ denotes the space of normal linear functionals on $ \IL(L^2(G)) $. Moreover
one has
\[
\Delta(x) = W^*(\eins \otimes x) W
\]
for all $ x \in M $. The algebra $ L^\infty(G) $ has an antipode, which
is an unbounded, $ \sigma $-strong* closed linear map $ S $ given by $ S(\id \otimes \omega)(W) = (\id \otimes \omega)(W^*) $
for $ \omega \in \IL(L^2(G))_* $. Moreover, there is a polar decomposition $ S = R \tau_{-i/2} $ where $ R $ is an
antiautomorphism of $ L^\infty(G) $ called the unitary antipode and $ (\tau_t) $ is a strongly continuous one-parameter group
of automorphisms of $ L^\infty(G) $ called the scaling group. The unitary antipode satisfies $ \sigma \circ (R \otimes R) \circ \Delta 
= \Delta \circ R $. 

The group-von Neumann algebra $ \mathcal{L}(G) $ of the quantum group $ G $ is the strong
closure of the algebra $ (\IL(L^2(G))_* \otimes \id)(W) $ with the comultiplication $ \hat{\Delta}: \mathcal{L}(G) \rightarrow \mathcal{L}(G) \otimes \mathcal{L}(G) $
given by
\[
\hat{\Delta}(y) = \hat{W}^* (\eins \otimes y) \hat{W}
\]
where $ \hat{W} = \Sigma W^* \Sigma $ and $ \Sigma \in \IL(L^2(G) \otimes L^2(G)) $ is the flip map. It defines
a locally compact quantum group $ \hat{G} $ which is called the dual of $ G $. The left invariant weight
$ \hat{\phi} $ for the dual quantum group has a GNS-construction $ \hat{\Lambda}: \mathcal{N}_{\hat{\phi}} \rightarrow L^2(G) $,
and according to our conventions we have $ \mathcal{L}(G) = L^\infty(\hat{G}) $. 
\end{rem}

\begin{rem} 
Since we are following the conventions of Kustermans and Vaes \cite{KustermansVaes2000}, there is a flip map built into the definition 
of $ \hat{\Delta} $. As we will see below, this is a natural choice when working with Yetter-Drinfeld actions; however, it is slightly inconvenient when it comes to Takesaki-Takai duality. We will write $ \check{G} $ for the quantum group 
corresponding to $ \mathcal{L}(G)^\cop $. That is, $ L^\infty(\check{G}) $ is the von Neumann algebra $ \mathcal{L}(G) $ equipped 
with the opposite comultiplication $ \check{\Delta} = \hat{\Delta}^\cop = \sigma \circ \hat{\Delta} $, where $ \sigma $ denotes the flip map. 
By slight abuse of terminology, we shall refer to both $ \check{G} $ and $ \hat{G} $ as the dual of $ G $. According to Pontrjagin duality, 
the double dual of $ G $ in either of these conventions is canonically isomorphic to $ G $.
\end{rem}

\begin{rem}[cf.\ \cite{KustermansVaes2000}] \label{right reg repr}
The modular conjugations of the left Haar weights $ \phi $ and $ \hat{\phi} $ are denoted by $ J $ and $ \hat{J} $, respectively.
These operators implement the unitary antipodes in the sense that
\[
R(x) = \hat{J} x^* \hat{J}, \qquad \hat{R}(y) = J y^* J
\]
for $ x \in L^\infty(G) $ and $ y \in \mathcal{L}(G) $. Note that $ L^\infty(G)' = JL^\infty(G) J $ and
$ \mathcal{L}(G)' = \hat{J} \mathcal{L}(G) \hat{J} $ for the commutants of $ L^\infty(G) $ and $ \mathcal{L}(G) $. Using $ J $ and $ \hat{J} $ one obtains multiplicative unitaries
\[
V = (\hat{J} \otimes \hat{J})\hat{W}(\hat{J} \otimes \hat{J}), \qquad \hat{V} = (J \otimes J) W (J \otimes J)
\]
which satisfy $ V \in \mathcal{L}(G)' \otimes L^\infty(G) $ and $ \hat{V} \in L^\infty(G)' \otimes \mathcal{L}(G) $, respectively. 
We have 
\[
\Delta(f) = V(f \otimes \eins) V^*, \qquad \hat{\Delta}(x) = \hat{V}(x \otimes \eins) \hat{V}^*
\]
for $ f \in L^\infty(G) $ and $ x \in \mathcal{L}(G) $. We also record the formula 
\[
(\hat{J} \otimes J)W(\hat{J} \otimes J) = W^*,
\]
which is equivalent to saying $ (R \otimes \hat{R})(W) = W $.
\end{rem} 

We will mainly work with the \cstar-algebras associated to the locally compact quantum group $ G $. The
algebra $ [(\id \otimes \IL(L^2(G))_*)(W)] $ is a strongly dense \cstar-subalgebra of $ L^\infty(G) $
which we denote by $ C^\red_0(G) $. Dually, the algebra
$ [(\IL(L^2(G))_* \otimes \id)(W)] $ is a strongly dense \cstar-subalgebra of $ \mathcal{L}(G) $
which we denote by $ C^*_\red(G) $.
These algebras are the reduced algebra of continuous functions vanishing at infinity
on $ G $ and the reduced group \cstar-algebra of $ G $, respectively. One has
$ W \in M(C^\red_0(G) \otimes C^*_\red(G)) $. 

Restriction of the comultiplications on $ L^\infty(G) $ and $ \mathcal{L}(G) $
turns $ C^\red_0(G) $ and $ C^*_\red(G) $ into Hopf-\cstar-algebras in the following sense.
\begin{defi} \label{defhopfcstar}
A Hopf \cstar-algebra is a \cstar-algebra $ H $ together with an injective nondegenerate $ * $-homomorphism
$ \Delta: H \rightarrow M(H \otimes H) $ such that the diagram
\[
\xymatrix{
H \ar@{->}[r]^{\Delta} \ar@{->}[d]^{\Delta} & M(H \otimes H) \ar@{->}[d]^{\id \otimes \Delta} \\
M(H \otimes H) \ar@{->}[r]^{\!\!\!\!\!\!\!\!\! \Delta \otimes \id} & M(H \otimes H \otimes H)
     }
\]
is commutative and $ [\Delta(H)(\eins \otimes H)] = H \otimes H = [(H \otimes \eins)\Delta(H)] $.
\end{defi}

\begin{defi}
A compact quantum group is a locally compact quantum group $ G $ such that $ C^\red_0(G) $ is unital. Similarly, a 
discrete quantum group is a locally compact quantum group $ G $ such that $ C^*_\red(G) $ is unital. 
\end{defi}
We will write $ C^\red(G) $ instead of $ C^\red_0(G) $ if $ G $ is a compact quantum group. 
A finite quantum group is a compact quantum group $ G $ such that $ C^\red(G) $ is finite dimensional. 
This is the case if and only if $ C^*_\red(G) $ is finite dimensional.

\begin{defi}
A unitary corepresentation of a Hopf \cstar-algebra $ H $ on a Hilbert $ B $-module $\CE$ is a unitary 
$ U \in M(H \otimes \IK(\CE)) = \IL(H \otimes \CE) $ such that 
\[
(\Delta \otimes \id)(U) = U_{13} U_{23}. 
\]
A unitary representation of a locally compact quantum group $ G $ is a unitary corepresentation of $ C^\red_0(G) $. 
\end{defi}

\begin{rem} 
This terminology is compatible with the classical case, more precisely, for a classical locally compact group $ G $ 
there is a bijective correspondence between unitary corepresentations of $ C_0(G) $ on Hilbert spaces 
and strongly continuous unitary representations 
of $ G $ in the usual sense, see \cite[Section 5]{MaesvanDaele98} and \cite[5.2.5]{Timmermann08}.
\end{rem} 

\begin{example}
The (left) regular representation of a locally compact quantum group $ G $ is the representation of $ G $ on the Hilbert 
space $ L^2(G) $ given by the 
multiplicative unitary $ W \in M(C^\red_0(G) \otimes \IK(L^2(G))) $, see Remark \ref{Wremark}. In fact, the 
relation $ (\Delta \otimes \id)(W) = W_{13} W_{23} $ is equivalent to the pentagon equation for $ W $. 
\end{example}

\begin{rem} 
Let $ G $ be a locally compact quantum group. The full group \cstar-algebra of $ G $ is a Hopf \cstar-algebra $ C^*_\max(G) $ 
together with a unitary representation $ \mathcal{W} \in M(C^\red_0(G) \otimes C^*_\max(G)) $ satisfying the following universal property: 
for every unitary representation $ U \in M(C_0^\red(G) \otimes \IK(\CE)) $ of $ G $ there exists a unique nondegenerate $ * $-homomorphism 
$ \pi: C^*_\max(G) \rightarrow \IL(\CE) $ such that $ (\id \otimes \pi)(\mathcal{W}) = U $. 

Similarly, one obtains the full \cstar-algebra $ C^\max_0(G) $ of functions on $ G $. 
\end{rem}

\begin{defi}
Let $ G $ be a locally compact quantum group. We say that $ G $ is amenable if the canonical quotient 
map $ C^*_\max(G) \rightarrow C^*_\red(G) $ is an isomorphism. Similarly, $G$ is called coamenable if the dual $\hat{G}$ is amenable. 
\end{defi}

If $G$ is coamenable we will simply write $ C_0(G) $ for $C^\red_0(G)$. By slight abuse of notation, we will also write $ C_0(G) $ if a statement holds for both $ C^\max_0(G) $ and $ C^\red_0(G) $. In particular, if $ G $ is compact and coamenable we will 
simply write $ C(G) $ instead $ C^\red(G) $. 
Similarly, if $G$ is amenable we will write $ C^*(G) $ for $C^*_\red(G)$. We remark that every compact quantum group is amenable, 
and equivalently every discrete quantum group is coamenable.

\begin{rem}[cf.\ \cite{Woronowicz98}, \cite{MaesvanDaele98}] \label{Peter-Weyl}
Let $ G $ be a compact quantum group. In analogy with the theory for compact groups, every unitary representation of $ G $ is completely reducible, and all irreducible representations are finite dimensional. We write $ \Irr(G) $ for the set of equivalence classes of 
irreducible unitary representations of $ G $. Our general second countability assumption amounts to saying that 
the set $ \Irr(G) $ is countable. 

A unitary representation of $ G $ on a finite dimensional Hilbert space $ \CH_\lambda $ is given by a unitary
$ u^\lambda \in C^\red(G) \otimes \IK(\CH_\lambda) $, so it can be viewed as an 
element $ u^\lambda = (u^\lambda_{ij}) \in M_n(C^\red(G)) $ if $ \dim(\CH_\lambda) = n $. 
Moreover, the corepresentation identity translates into the formula 
\[
\Delta(u^\lambda_{ij}) = \sum_{k = 1}^n u^\lambda_{ik} \otimes u^\lambda_{kj}
\]
for the comultiplication of the matrix coefficients $ u^\lambda_{ij} $. 

By Peter-Weyl theory, the linear span of all matrix coefficients $ u^\lambda_{ij} $ for $ \lambda \in \Irr(G) $ forms a 
dense $ * $-subalgebra $ \CO(G) \subset C^\red(G) $. In fact, together with the counit $ \epsilon: \CO(G) \to \mathbb{C} $ given by 
\[
\epsilon(u^\lambda_{ij}) = \delta_{ij} 
\]
and the antipode $ S: \CO(G) \to \CO(G) $ given by 
\[
S(u^\lambda_{ij}) = (u^\lambda_{ij})^* 
\]
the algebra $ \CO(G) $ becomes a Hopf $ * $-algebra. 

We will use the Sweedler notation $ \Delta(f) = f_{(1)} \otimes f_{(2)} $ for the comultiplication of general elements of $ \CO(G) $. 
This is useful for bookkeeping of coproducts, let us emphasize however that this notation is not meant to say that $ \Delta(f) $ is a simple tensor. For higher coproducts one introduces further indices, for instance $ f_{(1)} \otimes f_{(2)} \otimes f_{(3)} $ is an abbreviation 
for $ (\Delta \otimes \id) \Delta(f) = (\id \otimes \Delta) \Delta(f) $.

Again by Peter-Weyl theory, one obtains a vector space basis of $ \CO(G) $ consisting of the matrix coefficients $ u^\lambda_{ij} $ 
where $ \lambda $ ranges over $ \Irr(G) $ and $ 1 \leq i,j \leq \dim(\lambda) $. Here we abbreviate $ \dim(\lambda) = \dim(\CH_\lambda) $. 
If we write $ \CO(G)_\lambda $ for the linear span of the elements $ u^\lambda_{ij} $ for $ 1 \leq i,j \leq \dim(\lambda) $, 
we have a direct sum decomposition 
\[
\CO(G) = \text{alg-}\bigoplus_{\lambda \in \Irr(G)} \CO(G)_\lambda. 
\]
Note that the coproduct of $ \CO(G) $ takes a particularly simple form in this picture; from the above formula for 
$ \Delta(u^\lambda_{ij}) $ we see that it looks like the transpose of matrix multiplication. 

Let $ u^\lambda $ be an irreducible unitary representation of $ G $, and let $ u^\lambda_{ij} $ be the corresponding 
matrix elements in some fixed basis. The contragredient representation $ u^{\lambda^c} $ is given by the 
matrix $ (u^{\lambda^c})_{ij} = S(u^\lambda_{ji}) $ where $ S $ is the antipode 
of $ \CO(G) $. In general $ u^{\lambda^c} $ is not unitary, but as any finite dimensional representation of $ G $, it is 
unitarizable. The representations $ u^\lambda $ and $ u^{\lambda^{cc}} $ are equivalent, and there exists a unique positive invertible 
intertwiner $ F_\lambda: \CH_\lambda \to \CH_{\lambda^{cc}} $ satisfying $ tr(F_\lambda) = tr(F_\lambda^{-1}) $. 
The trace of $ F_\lambda $ is called the quantum dimension of $ \CH_\lambda $ and denoted by $ \dim_q(\lambda) $. 

With this notation, the Schur orthogonality relations are
\[
\phi(u^\lambda_{ij} (u^\eta_{kl})^*) = \delta_{\lambda\eta} \delta_{ik} \, \frac{1}{\dim_q(\lambda)}\, (F_\lambda)_{lj}
\]
where $ \lambda, \eta \in \Irr(G) $ and $ \phi: C^\red(G) \rightarrow \mathbb{C} $ is the Haar state of $ G $. 
In the sequel we shall fix bases such that $ F_\lambda $ is a diagonal operator for all $ \lambda \in \Irr(G) $. 
\end{rem}

\begin{rem} \label{Peter-Weyl2}
Let again $ G $ be a compact quantum group. While the comultiplication for the \cstar-algebra $ C^\red(G) $ looks particularly simple in 
terms of matrix coefficients, dually the multiplication in the \cstar-algebra $ C^*(G) $ is easy to describe. 
More precisely, we have
\[
C^*(G) \cong \bigoplus_{\lambda \in \Irr(G)} \IK(\CH_\lambda), 
\]
where the right hand side denotes the $ c_0 $-direct sum of the matrix algebras $ \IK(\CH_\lambda) $.  
If $ u^\lambda_{ij} $ are the matrix coefficients in $ \CO(G) $, then the dual basis vectors $ \omega^\lambda_{ij} $, 
that is, the linear functionals on $ \CO(G) $ given by 
\[
\omega^\lambda_{ij}(u^\eta_{kl}) = \delta_{\lambda \eta} \delta_{ik} \delta_{jl} 
\]
form naturally a vector space basis of matrix units for the algebraic direct sum 
\[
\CD(G) = \text{alg-}\bigoplus_{\lambda \in \Irr(G)} \IK(\CH_\lambda)
\]
inside $ C^*_\red(G) $.

Let us note that $\CD(G)$ should not be confused with a quantum double, but there will be no conflicting notation in this regard appearing in this paper.

Let us also note that according to the Schur orthogonality relations, see Remark \ref{Peter-Weyl}, 
the functionals $ \omega^\lambda_{ij} $ extend continuously to bounded linear functionals on $ C^\red(G) $. 
\end{rem}

\subsection{Actions, crossed products and Takesaki-Takai duality}

Let us now consider actions on \cstar-algebras.

\begin{defi} \label{defcoaction}
A (left) coaction of a Hopf \cstar-algebra $ H $ on a \cstar-algebra $ A $ is an injective nondegenerate $ * $-homomorphism
$ \alpha: A \rightarrow M(H \otimes A) $ such that the diagram
\[
\xymatrix{
A \ar@{->}[r]^{\alpha} \ar@{->}[d]^\alpha & M(H \otimes A) \ar@{->}[d]^{\Delta \otimes \id} \\
M(H \otimes A) \ar@{->}[r]^{\!\!\!\!\!\!\!\!\!\! \id \otimes \alpha} & M(H \otimes H \otimes A)
     }
\]
is commutative and $ [\alpha(A)(H \otimes \eins)] = H \otimes A $.

If $ (A, \alpha) $ and $ (B, \beta) $ are \cstar-algebras equipped with coactions of $ H $, then a $ * $-homomorphism
$ \phi: A \rightarrow M(B) $ is called $ H $-colinear if $ \beta \phi = (\id \otimes \phi)\alpha $.

An action of a locally compact quantum group $ G $ on a \cstar-algebra $ A $ is a 
coaction $ \alpha: A \rightarrow M(C^\red_0(G) \otimes A) $ of $ C^\red_0(G) $ on $ A $. We will also say
that $ (A, \alpha) $ is a $ G $-\cstar-algebra in this case. A $*$-homomorphism $\phi:A \to M(B)$ between $G$-\cstar-algebras is 
called $G$-equivariant if it is $C_0^\red(G)$-colinear.
\end{defi}

If $ H $ is a Hopf \cstar-algebra and $\alpha: A \rightarrow M(H \otimes A)$ a coaction, then 
the density condition $[\alpha(A)(H \otimes \eins)] = H \otimes A $ implies in particular that the image of  
$\alpha$ is contained in the $H$-relative multiplier algebra of $H \otimes A$, defined by 
\[
M_H(H \otimes A) = \set{m \in M(H \otimes A) \mid m(H\otimes \eins), (H \otimes \eins) m \subset H \otimes A}.
\]

In contrast to the situation for ordinary multiplier algebras, the tensor product map $\id \otimes \phi: H \otimes A \to H \otimes B $
of a (possibly degenerate) $*$-homomorphism $\phi:A \to B$ admits a unique extension 
$ M_H(H \otimes A) \to M_H(H \otimes B)$ to the relative multiplier algebra, which will again be denoted by $ \id \otimes \phi $. 
If $\phi$ is injective, then this also holds for $\id \otimes \phi: M_H(H \otimes A) \to M_H(H \otimes B)$. 
We refer to \cite[Appendix A]{EKQR2006} for further details on relative multiplier algebras.

\begin{rem} \label{induct limit coactions}
For the construction of examples of Rokhlin actions we shall consider inductive limit actions of Hopf \cstar-algebras. 
Assume $ H $ is a Hopf \cstar-algebra and that $ A_1 \rightarrow A_2 \rightarrow \cdots $ is an inductive system of $ H $-\cstar-algebras with coactions $ \alpha_j: A_j \rightarrow M_H(H \otimes A_j) $ and injective equivariant connecting maps. Then the direct limit $ A = \varinjlim A_j $ becomes an $ H $-\cstar-algebra in a canonical way. 
Firstly, we have $ \varinjlim M_H(H \otimes A_j) \subset M_H(H \otimes A) $ naturally since 
our system $ (A_j)_{j \in \mathbb{N}} $ 
has injective connecting maps. The maps $ \alpha_j $ define a compatible family of $ * $-homomorphisms 
$ \alpha_j: A_j \rightarrow M_H(H \otimes A_j) \rightarrow M_H(H \otimes A) $, and induce 
a $ * $-homomorphism $ \alpha: A \rightarrow M_H(H \otimes A) $. Coassociativity of $ \alpha $ and the density 
condition $ [(H \otimes \eins)\alpha(A)] = H \otimes A $ follow from the corresponding properties of the coactions $ \alpha_j $. Injectivity of 
the map $ \alpha: A \rightarrow M_H(H \otimes A) $ follows from injectivity of the maps $ \alpha_j $ and of the connecting maps in the 
system $ H \otimes A_j $.
\end{rem}

\begin{rem} [cf.\ \cite{Podles95}] \label{spectralsub} 
Let $ G $ be a compact quantum group. Further below we will use the fact that any $ G $-\cstar-algebra $A$ 
admits a spectral decomposition. In order to discuss this we review some further definitions 
and results. 

Let $ G $ be a compact quantum group and let $ (A, \alpha) $ be a $ G $-\cstar-algebra.  
Since $ G $ is compact, the coaction is an injective $ * $-homomorphism $ \alpha: A \rightarrow C^\red(G) \otimes A $ 
satisfying the coassociativity identity $ (\Delta \otimes \id) \circ \alpha = (\id \otimes \alpha) \circ \alpha $ 
and the density condition $ [(C^\red(G) \otimes \eins)\alpha(A)] = C^\red(G) \otimes A $. For $ \lambda \in \Irr(G) $ we let 
\[
A_\lambda = \{a \in A \mid \alpha(a) \in \CO(G)_\lambda \odot A \}
\]
be the $ \lambda $-spectral subspace of $ A $. Here we recall that $ \CO(G)_\lambda \subset \CO(G) $ denotes the 
span of the matrix coefficients for $ \CH_\lambda $, see Remark \ref{Peter-Weyl}. 

The subspace $ A_\lambda $ is closed in $ A $, and there is a projection operator $ p_\lambda: A \rightarrow A_\lambda $ defined by 
\[
p_\lambda(a) = (\theta_\lambda \otimes \id) \alpha(a)
\]
where 
\[ 
\theta_\lambda(x) = \dim_q(\lambda) \sum_{j = 1}^{\dim(\lambda)} (F_\lambda)^{-1}_{jj} \phi(x (u_{jj}^\pi)^*). 
\]
By definition, the spectral subalgebra $ \CS(A) \subset A $ is the $ * $-subalgebra 
\[
\CS(A) = \CS_G(A) = \text{alg-}\bigoplus_{\lambda \in \Irr(G)} A_\lambda. 
\]
>From the Schur orthogonality relations and $ [(C^\red(G) \otimes \eins)\alpha(A)] = C^\red(G) \otimes A $ 
it is easy to check that $ \CS(A) $ is dense in $ A $. 

For $ \omega \in \CD(G) $ and $ a \in A $ let us define 
\[
a \cdot \omega = (\omega \otimes \id)\circ \alpha(a).  
\]
Then the Schur orthogonality relations imply that $ A \cdot \CD(G) $, the linear span of all elements 
$ a \cdot \omega $ as above, is equal to $ \CS(A) $. Moreover, from the coaction property of $ \alpha $ it follows that 
$ A $ becomes a right $ \CD(G) $-module in this way. 
\end{rem}

\begin{defi} \label{crossed product}
Let $ G $ be a locally compact quantum group and let $ A $ be a $ G $-\cstar algebra with 
coaction $ \alpha: A \rightarrow M(C^\red_0(G) \otimes A) $. 
The reduced crossed product $ G \ltimes_{\alpha, \red} A $ is the \cstar-algebra 
\[
G \ltimes_{\alpha, \red} A = [(C_0^\red(\check{G}) \otimes \eins)\alpha(A)] \subset M(\IK(L^2(G)) \otimes A).
\]
\end{defi}

Recall here that $ C_0^\red(\check{G}) = C^*_\red(G) $ as a \cstar-algebra, but equipped with the opposite comultiplication. 

The reduced crossed product is equipped with a canonical dual action of $ \check{G} $, which turns it into a $ \check{G} $-\cstar-algebra. More precisely, the dual action is given by comultiplication on the copy of $ C_0^\red(\check{G}) $ and the trivial action on the copy 
of $ A $ inside $ M(G \ltimes_{\alpha, \red} A) $. By our second countability assumption, the crossed product $ G \ltimes_{\alpha, \red} A $ 
is separable provided $ A $ is separable.

For the purpose of reduced duality, we have to restrict ourselves to regular locally compact quantum groups \cite{BaajSkandalis93}. 
All compact and discrete quantum groups are regular, so this is not an obstacle for the constructions we are interested in further below. 

\begin{rem} \label{conjug action compacts}
If $ G $ is a regular locally compact quantum group then the regular representation of $ L^2(G) $ induces an action of $ G $ on the 
algebra $ \IK(L^2(G)) $ of compact operators by conjugation. More generally, if $ A $ is any $ G $-\cstar-algebra we can turn the tensor product $ \IK(L^2(G)) \otimes A $ into a $ G $-\cstar-algebra by equipping $ \IK(L^2(G)) \otimes A \cong \IK(L^2(G) \otimes A) $ with the conjugation action arising from the tensor product representation of $ G $ on the Hilbert $ A $-module $ \CE = L^2(G) \otimes A $. Explicitly, following the notation in \cite{NestVoigt2010}, we consider 
\[
\lambda(\xi \otimes a) = X^*_{12} \Sigma_{12} (\id \otimes \alpha)(\xi \otimes a) 
\] 
where $ X = \Sigma V \Sigma $ and $V$ is as in Remark \ref{right reg repr}. This determines a 
coaction $ \lambda: \CE \rightarrow M(C^\red_0(G) \otimes \CE) $, which in turn corresponds to a unitary corepresentation 
\[
V_\lambda^* \in \IL(C^\red_0(G) \otimes \CE) \cong M(C^\red_0(G) \otimes \IK(\CE)).
\]
The conjugation action $ \ad_\lambda: \IK(\CE) \rightarrow M(C^\red_0(G) \otimes \IK(\CE)) \cong \IL(C^\red_0(G) \otimes \CE) $ is then defined by 
\[
\ad_\lambda(T) = V_\lambda(\eins \otimes T) V_\lambda^*,
\]
and $ \IK(\CE) $ becomes a $ G $-\cstar-algebra in this way. Under the isomorphism $ \IK(L^2(G)) \otimes A \cong \IK(L^2(G) \otimes A) $, this $G$-action is given by
\[
\alpha_\IK(T \otimes a) = \ad_\lambda(T \otimes a) 
= X^*_{12}(\eins \otimes T \otimes \eins)\alpha(a)_{13} X_{12}, \quad T\in \IK(L^2(G)), a \in A.
\]
If $ G $ is a classical group then the resulting action is nothing but the diagonal action on the tensor product $ \IK(L^2(G)) \otimes A $. For further details we refer to \cite{BaajSkandalis89}, \cite{NestVoigt2010}.
\end{rem}

Let us now state the Takesaki-Takai duality theorem for regular locally compact quantum groups \cite{BaajSkandalis93}, 
see \cite[chapter 9]{Timmermann08} for a detailed exposition. 

\begin{theorem} \label{TTduality}
Let $ G $ be a regular locally compact quantum group and let $ A $ be a $ G $-\cstar-algebra. Then there is a natural isomorphism
\[
(\check{G} \ltimes_{\check{\alpha}, \red} G \ltimes_{\alpha, \red} A, \check{\check{\alpha}}) \cong (\IK(L^2(G)) \otimes A, \alpha_\IK)
\]
of $ G $-\cstar-algebras.
\end{theorem}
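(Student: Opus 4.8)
The statement is the Baaj--Skandalis duality theorem \cite{BaajSkandalis93}, so the plan is to follow their argument, adapted to the conventions of Remarks \ref{Wremark} and \ref{right reg repr}. Write $B := G \ltimes_{\alpha,\red} A$. By Definition \ref{crossed product}, $B$ is the closed linear span of $(C^*_\red(G) \otimes \eins)\alpha(A)$ inside $M(\IK(L^2(G)) \otimes A)$, using $C^\red_0(\check{G}) = C^*_\red(G)$. Applying Definition \ref{crossed product} again, and noting $L^2(\check{G}) = L^2(G)$, we realize $C := \check{G} \ltimes_{\check{\alpha},\red} B$ as the closed linear span of $(C^\red_0(\check{\check{G}}) \otimes \eins)\check{\alpha}(B)$ inside $M(\IK(L^2(G)) \otimes \IK(L^2(G)) \otimes A)$; here Pontrjagin duality identifies $C^\red_0(\check{\check{G}}) \cong C^\red_0(G)$, and the bidual action $\check{\check{\alpha}}$ acts by the comultiplication on this leg and trivially on the rest. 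The goal is to exhibit a unitary conjugation carrying $(C, \check{\check{\alpha}})$ onto $(\IK(L^2(G)) \otimes A, \alpha_\IK)$.

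The heart of the matter is to rewrite $C$ in terms of elementary generators and then collapse them. Using $\Delta(f) = V(f \otimes \eins)V^*$ and $\hat{\Delta}(x) = \hat{V}(x \otimes \eins)\hat{V}^*$ from Remark \ref{right reg repr}, the pentagon equation for $W$, and the leg-numbering identities relating $W$, $V$, $\hat{V}$ and $X = \Sigma V \Sigma$, one spells out $\check{\alpha}(B)$ and finds that $C$ is the closed linear span of products of a copy of $C^*_\red(G)$ in the first leg, a suitably placed copy of $C^\red_0(G)$, and $(\id \otimes \alpha)(A)$ in the last two legs. Conjugating by $X_{12}$ --- the unitary entering the definition of $\alpha_\IK$ in Remark \ref{conjug action compacts} --- and invoking the pentagon equation moves $(\id \otimes \alpha)(A)$ onto $\eins \otimes \eins \otimes A$, while the first two legs become $[C^\red_0(G) \cdot C^*_\red(G)]$ tensored with matrix units. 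Regularity of $G$ now enters decisively: the defining condition $[(\id \otimes \omega)(\Sigma W) : \omega \in \IL(L^2(G))_*] = \IK(L^2(G))$ is equivalent to $[C^\red_0(G) \cdot C^*_\red(G)] = \IK(L^2(G))$, so $C$ becomes exactly $\IK(L^2(G)) \otimes A$. Every step is functorial in $A$, which yields naturality. (Alternatively one could first treat $A = \IC$ with trivial action, where the claim reduces to $\check{G} \ltimes_{\check{\Delta},\red} C^*_\red(G) \cong \IK(L^2(G))$, and then bootstrap, but the computation above handles the general case uniformly.)

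It then remains to track the bidual action through these identifications. Since $\check{\check{\alpha}}$ acts by the comultiplication on the $C^\red_0(\check{\check{G}})$-leg and trivially elsewhere, pushing this leg through the conjugation by $X_{12}$ and the identification $[C^\red_0(G) \cdot C^*_\red(G)] = \IK(L^2(G))$ turns it into the conjugation action $\ad_\lambda$ on $\IK(L^2(G)) \otimes A$, which is precisely $\alpha_\IK$ in the notation of Remark \ref{conjug action compacts}. Matching the two formulas uses $\hat{V} \in L^\infty(G)' \otimes \mathcal{L}(G)$ and the relation $(R \otimes \hat{R})(W) = W$, both recorded in Remark \ref{right reg repr}.

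The step I expect to be the main obstacle is the bookkeeping in the second paragraph: identifying the correct conjugating unitary and checking, through the pentagon equation and the commutation relations among $W$, $V$ and $\hat{V}$, that the iterated crossed product has exactly the claimed three families of generators and that the resulting $G$-action is literally $\alpha_\IK$ and not a cocycle-perturbed version. Getting the conventions for $\hat{G}$ versus $\check{G}$ --- and in particular the flip built into $\hat{\Delta}$ --- to line up so that the regularity identity applies on the nose is the delicate point; for full details I would follow \cite[Chapter 9]{Timmermann08}, see also \cite{BaajSkandalis89, NestVoigt2010}.
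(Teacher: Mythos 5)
Your overall strategy --- realize the iterated reduced crossed product concretely on $L^2(G) \otimes L^2(G) \otimes A$, conjugate by a unitary supported on the two $L^2(G)$-legs, and let regularity supply $[C_0^\red(G)\, C^*_\red(G)] = \IK(L^2(G))$ --- is the same as the paper's, and you correctly locate where regularity and the bidual-action bookkeeping enter. But the central step as you state it would fail: you claim that conjugating by $X_{12}$ (with $X = \Sigma V \Sigma$) "moves $(\id\otimes\alpha)(A)$ onto $\eins\otimes\eins\otimes A$". No unitary acting only on the first two legs can do this. Writing $\alpha(a)$ informally as $\sum_i f_i \otimes a_i$, the conjugate $X_{12}^*\,\alpha(a)_{23}\,X_{12} = \sum_i \bigl(X^*(\eins\otimes f_i)X\bigr)_{12}(a_i)_3$ has exactly the same $A$-components in the untouched third leg, so it cannot collapse to $\eins\otimes\eins\otimes a$ unless $\alpha$ were trivial.

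The paper's mechanism is different. One conjugates by $W_{12}^*$, which (i) untwists $\hat{\Delta}^\cop(C^*_\red(G))_{12}$ into $C^*_\red(G)\otimes\eins\otimes\eins$ since $\hat{\Delta}^\cop(x) = W(x\otimes\eins)W^*$; (ii) commutes with the concrete copy of $C_0^\red(\check{\check{G}})$, which on $L^2(G)$ is $U C_0^\red(G) U = J C_0^\red(G) J \subset L^\infty(G)'$ rather than a plain first-leg copy of $C_0^\red(G)$ --- a point your appeal to "Pontrjagin duality identifies $C_0^\red(\check{\check{G}}) \cong C_0^\red(G)$" glosses over, and which is exactly what makes the commutation with $W_{12}$ work; and (iii) sends $\alpha(A)_{23}$ to $(\id\otimes\alpha)\alpha(A)$, which is then identified with $\alpha(A)$ using injectivity of $\id\otimes\alpha$. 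The copy of $A$ is never conjugated onto $\eins\otimes A$; the final collapse $[(\IK(L^2(G))\otimes\eins)\alpha(A)] = \IK(L^2(G))\otimes A$ comes from the coaction density condition $[(C_0^\red(G)\otimes\eins)\alpha(A)] = C_0^\red(G)\otimes A$. This is also why the bidual action comes out as $\check{\check{\alpha}}(T\otimes a) = X_{12}^*(\eins\otimes T\otimes\eins)\alpha(a)_{13}X_{12}$, i.e.\ literally the formula for $\alpha_\IK$ with $\alpha(a)_{13}$ surviving, rather than a diagonal action on $\eins\otimes\eins\otimes A$. You should replace the $X_{12}$-conjugation step by the $W_{12}^*$-conjugation together with the injectivity and density arguments above.
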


Let us give a brief sketch of the proof of Theorem \ref{TTduality} for the sake of convenience. 
Recall from Remark \ref{right reg repr} that $ J $ is the modular conjugation of the left Haar weight of $ L^\infty(G) $, 
and similarly $ \hat{J} $ the modular conjugation of the left Haar weight of $ \mathcal{L}(G) $. We shall write $ U = J \hat{J} $. 
 
By definition, we have 
\[
\check{G} \ltimes_{\check{\alpha}, \red} G \ltimes_{\alpha, \red} A 
= [(U C_0^\red(G) U \otimes \eins \otimes \eins)(\hat{\Delta}^\cop(C^*_\red(G)) \otimes \eins)\alpha(A)_{23}]. 
\]
Conjugating by $ W_{12}^* $ and using that $ C_0^\red(G) $ and $ J C_0^\red(G) J $ commute, this is isomorphic to 
\[ 
[(U C_0^\red(G) U \otimes \eins \otimes \eins)(C^*_\red(G)) \otimes \eins \otimes \eins)(\id \otimes \alpha)\alpha(A)]
\cong [(U C_0^\red(G) U C^*_\red(G)) \otimes \eins) \alpha(A)] 
\] 
since $ \alpha $ is injective. Moreover, $ [U C_0^\red(G) U C^*_\red(G)] $ identifies with $ \IK(L^2(G)) $. 
Hence the right hand side is isomorphic to $ \IK(L^2(G)) \otimes A $, taking into account that 
\[
[(\IK(L^2(G)) \otimes \eins) \alpha(A)] = [(\IK(L^2(G)) C_0^\red(G) \otimes \eins)\alpha(A)] = [\IK(L^2(G)) C_0^\red(G) \otimes A]  
\] 
by the density condition $ [(C_0^\red(G) \otimes \eins) \alpha(A)] = C_0^\red(G) \otimes A $.
 
Let us also identify the bidual coaction. By construction, the bidual coaction $ \check{\check{\alpha}} $ 
maps $ UfU \otimes \eins \in \IK(L^2(G)) \otimes A $ 
for $ f \in C^\red_0(G) $ to $ (\eins \otimes U)\Delta(f) (\eins \otimes U) $ and leaves $ C^*_\red(G) \otimes \eins $ and $ \alpha(A) $ 
fixed. Using the relations from Remark \ref{right reg repr} one can show 
\[ 
\check{\check{\alpha}}(T \otimes a) = X_{12}^* (\eins \otimes T \otimes \eins) \alpha(a)_{13} X_{12}, 
\] 
where $ X = (\eins \otimes U) W (\eins \otimes U) $.

At some points we will also need the full crossed product $ G \ltimes_{\alpha, \max} A $ of a $ G $-\cstar-algebra $ (A, \alpha) $; 
we refer to \cite{NestVoigt2010} for a review of its definition in terms of its universal property for covariant representations.

\subsection{Braided tensor products}

Finally, let us discuss Yetter-Drinfeld actions and braided tensor products. 
We refer to \cite{NestVoigt2010} for more information.

\begin{defi} \label{Yetter-Drinfeld}
Let $ G $ be a locally compact quantum group. A $ G$-$ \yd $-\cstar-algebra is a \cstar-algebra $ A $ together with a pair of 
actions $ \alpha: A \rightarrow M(C^\red_0(G) \otimes A) $ and $ \gamma: A \rightarrow M(C^*_\red(G) \otimes A) $ satisfying the 
Yetter-Drinfeld compatibility condition
\[
(\sigma \otimes \id) \circ (\id \otimes \alpha) \circ \gamma = (\ad(W) \otimes \id) \circ (\id \otimes \gamma) \circ \alpha. 
\]
Here $ W \in M(C^\red_0(G) \otimes C^*_\red(G) $ is the multiplicative unitary.  
\end{defi}

The braided tensor product, which we review next, generalizes at the same time the minimal tensor product 
of \cstar-algebras and reduced crossed products. Roughly speaking, it allows one to construct a new \cstar-algebra 
out of two constituent \cstar-algebras, together with some prescribed commutation relations between the two factors. 
\begin{defi} 
Let $ G $ be a locally compact quantum group. Given a $ G $-$ \yd $-\cstar-algebra $ A $ and a $ G $-\cstar-algebra $ (B,\beta) $, 
one defines the braided tensor product $ A \boxtimes_G B = A \boxtimes B $ by 
\[
A \boxtimes B = [\gamma(A)_{12} \beta(B)_{13}] \subset M(\IK(L^2(G)) \otimes A \otimes B). 
\]
\end{defi}

The braided tensor product $ A \boxtimes B $ becomes a $ G $-\cstar-algebra with coaction 
$ \alpha \boxtimes \beta: A \boxtimes B \rightarrow M(C^\red_0(G) \otimes A \boxtimes B) $ in such a way that the 
canonical embeddings $ \iota_A: A \rightarrow M(A \boxtimes B) $ and $ \iota_B: B \rightarrow M(A \boxtimes B) $ are $ G $-equivariant.
We write $ a \boxtimes \eins $ and $ \eins \boxtimes b $ for the images of elements $ a \in A $ and $ b \in B $ in $ M(A \boxtimes B) $, 
respectively. 

\begin{example} \label{exyd}
A basic example of a $G$-$ \yd $-action is given by the \cstar-algebra $ A = C^\red_0(G) $ for a regular locally compact quantum group $ G $, 
equipped with $ \alpha = \Delta $ and the adjoint action $ \gamma(f) = \hat{W}^*(\eins \otimes f) \hat{W} $. 
\end{example}
 
We will mainly be interested in the special case of the braided tensor product
construction where the first factor is equal to $ C^\red_0(G) $ with the Yetter-Drinfeld structure from Example \ref{exyd}.

\begin{lemma} \label{trivialization}
Let $ G $ be a regular locally compact quantum quantum group. For any $ G $-\cstar-algebra $ A $, there 
exists a $G$-equivariant $ * $-isomorphism 
\[ 
T_\alpha: (C_0^\red(G) \boxtimes A, \Delta \boxtimes \alpha) \rightarrow (C_0^\red(G) \otimes A, \Delta \otimes \id)
\]
such that $ T_\alpha(1 \boxtimes a) = \alpha(a) $ for all $ a \in A \subset M(C_0^\red(G) \boxtimes A) $ 
and $ T_\alpha(f \boxtimes \eins) = f \otimes \eins $ for $ f \in C^\red_0(G) \subset M(C_0^\red(G) \boxtimes A) $. 
\end{lemma}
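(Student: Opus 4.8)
The plan is to realize $ T_\alpha $ concretely as the composition of a conjugation by the multiplicative unitary with an elementary identification of $ \mathrm{C}^* $-algebras. Recall from the definitions that
\[
C^\red_0(G) \boxtimes A = [\gamma(C^\red_0(G))_{12}\, \alpha(A)_{13}] \subseteq M(\IK(L^2(G)) \otimes C^\red_0(G) \otimes A),
\]
that the canonical embeddings are $ \iota_{C^\red_0(G)}(f) = \gamma(f)_{12} $ and $ \iota_A(a) = \alpha(a)_{13} $, and that $ \gamma(f) = \hat{W}^*(\eins \otimes f)\hat{W} $ according to Example \ref{exyd}; write $ \Delta^{\cop} = \sigma \circ \Delta $ for the opposite comultiplication of $ C^\red_0(G) $. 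First I would determine the effect of conjugating by $ \hat{W}_{12} $. By the very definition of $ \gamma $ one has $ \hat{W}_{12}\gamma(f)_{12}\hat{W}_{12}^* = \eins \otimes f \otimes \eins $. On the other hand, since $ \hat{W} = \Sigma W^* \Sigma $ and $ W^*(\eins \otimes f)W = \Delta(f) $ by Remark \ref{Wremark}, one gets $ \hat{W}(f \otimes \eins)\hat{W}^* = \Sigma \Delta(f) \Sigma = \Delta^{\cop}(f) $, and hence, as $ \hat{W}_{12} $ leaves the third leg untouched, $ \hat{W}_{12}\alpha(a)_{13}\hat{W}_{12}^* = (\Delta^{\cop} \otimes \id)(\alpha(a)) $. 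A little care is needed here to see that $ \Delta^{\cop} \otimes \id $ may indeed be applied to $ \alpha(a) \in M_{C^\red_0(G)}(C^\red_0(G) \otimes A) $ and that the displayed equalities are genuine identities in the relevant (relative) multiplier algebras rather than merely formal Sweedler manipulations, but this is standard.

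Next I would straighten the resulting algebra out using coassociativity and the density condition. Coassociativity of $ \alpha $ gives $ (\Delta^{\cop} \otimes \id)\alpha = (\sigma \otimes \id)(\id \otimes \alpha)\alpha $, and combining this with the facts that $ \id \otimes \alpha $ is a nondegenerate $ * $-homomorphism and that $ [(C^\red_0(G) \otimes \eins)\alpha(A)] = C^\red_0(G) \otimes A $, I expect to obtain
\[
\hat{W}_{12}\,(C^\red_0(G) \boxtimes A)\,\hat{W}_{12}^* = [(\eins \otimes C^\red_0(G) \otimes \eins)(\Delta^{\cop} \otimes \id)(\alpha(A))] = [\alpha(A)_{13}\,(\eins \otimes C^\red_0(G) \otimes \eins)].
\]
The right-hand side is generated by the two commuting copies $ \alpha(A) $ (in legs $ 1 $ and $ 3 $) and $ C^\red_0(G) $ (in leg $ 2 $), which live in disjoint legs; since $ \alpha $ is injective and its image is nondegenerately embedded, this algebra is canonically isomorphic to $ C^\red_0(G) \otimes A $ via the $ * $-isomorphism $ \Theta $ determined by $ \alpha(a)_{13} \mapsto \eins \otimes a $ and $ \eins \otimes g \otimes \eins \mapsto g \otimes \eins $. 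I would then define $ T_\alpha := \Theta \circ \ad(\hat{W}_{12}) $, extended to multiplier algebras where necessary.

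It then remains to check the prescribed values and equivariance. The first value is immediate: $ T_\alpha(f \boxtimes \eins) = \Theta(\eins \otimes f \otimes \eins) = f \otimes \eins $. For the second, approximating $ \alpha(a) $ strictly by finite sums $ \sum_i g_i \otimes b_i $ and using coassociativity one finds $ (\Delta^{\cop} \otimes \id)(\alpha(a)) = \sum_i \alpha(b_i)_{13}\,(g_i)_2 $, whence $ T_\alpha(\eins \boxtimes a) = \Theta\big((\Delta^{\cop} \otimes \id)(\alpha(a))\big) = \sum_i (\eins \otimes b_i)(g_i \otimes \eins) = \sum_i g_i \otimes b_i = \alpha(a) $. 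Finally, both $ f \mapsto f \otimes \eins $ and $ a \mapsto \alpha(a) $ are equivariant $ * $-homomorphisms from $ (C^\red_0(G), \Delta) $, respectively $ (A, \alpha) $, into $ (C^\red_0(G) \otimes A, \Delta \otimes \id) $ --- the latter being precisely coassociativity of $ \alpha $ --- and they coincide with $ T_\alpha \circ \iota_{C^\red_0(G)} $ and $ T_\alpha \circ \iota_A $; since $ \iota_{C^\red_0(G)}(C^\red_0(G)) $ and $ \iota_A(A) $ generate $ C^\red_0(G) \boxtimes A $ and both embeddings are $ G $-equivariant, it follows that $ T_\alpha $ intertwines $ \Delta \boxtimes \alpha $ with $ \Delta \otimes \id $.

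The step I expect to be most delicate is the computation of $ \hat{W}_{12}(C^\red_0(G) \boxtimes A)\hat{W}_{12}^* $ in the second paragraph: keeping the conventions for $ W $, $ \hat{W} $, $ \Sigma $ and $ \sigma $ together with the leg-numbering mutually consistent is the classic trap, and the passage from formal Sweedler calculus to rigorous identities in relative multiplier algebras must be handled with some care. Once the identity $ \hat{W}_{12}(C^\red_0(G) \boxtimes A)\hat{W}_{12}^* = [\alpha(A)_{13}(\eins \otimes C^\red_0(G) \otimes \eins)] $ is in place, the remaining arguments are elementary.
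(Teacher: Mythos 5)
Your proposal is correct and follows essentially the same route as the paper: conjugate by $\hat{W}_{12}$, use $\hat{W}=\Sigma W^*\Sigma$ together with coassociativity to rewrite the image as $(\id\otimes\alpha)$ applied to $[(C^\red_0(G)\otimes\eins)\alpha(A)]$, and then invoke injectivity of $\alpha$ and the Podle\'s density condition to identify the result with $C^\red_0(G)\otimes A$. Your verification of the prescribed values and of equivariance on the generating copies is a correct (and slightly more detailed) version of what the paper leaves as ``straightforward to check''.
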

\begin{proof} The map $ T_\alpha $ is obtained from the identifications 
\[
\begin{array}{lcl}
C^\red_0(G) \boxtimes A &=& [\hat{W}_{12}^* (\eins \otimes C^\red_0(G) \otimes \eins) \hat{W}_{12} \alpha(A)_{13}] \\ 
&\cong& [(\eins \otimes C^\red_0(G) \otimes \eins) \hat{W}_{12} \alpha(A)_{13} \hat{W}_{12}^*] \\ 
&\cong& [(C^\red_0(G) \otimes \eins \otimes \eins) W^*_{12} \alpha(A)_{23} W_{12}] \\ 
&=& [(C^\red_0(G) \otimes \eins \otimes \eins) (\id \otimes \alpha)\alpha(A)] \\ 
&\cong& [(C^\red_0(G) \otimes \eins) \alpha(A)]\\
&=& C^\red_0(G) \otimes A,
\end{array}
\]
where we use $ \hat{W} = \Sigma W^* \Sigma $ in the third step and the fact that $ \alpha $ is injective in the penultimate step. 

It is straightforward to check that $ T_\alpha(a) = \alpha(a) $ for $ a \in A \subset M(C_0^\red(G) \boxtimes A) $ 
and $ T_\alpha(f) = f \otimes \eins $ for $ f \in C_0^\red(G) \subset M(C_0^\red(G) \boxtimes A) $.

In particular, the canonical action of $ G $ on $ C^\red_0(G) \boxtimes A $ corresponds to the translation 
action on the first tensor factor in $ C^\red_0(G) \otimes A $ under this isomorphism.
\end{proof}


\section{Induced actions on sequence algebras}

The theory of Rokhlin actions for compact quantum groups relies on the possibility of obtaining induced actions on the level of 
sequence algebras. In this section we shall recall a few facts on sequence algebras, 
and then discuss the construction of induced actions, separately for the case of discrete and compact quantum groups.

\subsection{Sequence algebras} 
Let us first recall some notions related to sequence algebras, see \cite{BarlakSzabo15} and \cite{Kirchberg04}.
If $A$ is a \cstar-algebra we write 
\[
\ell^\infty(A) = \{(a_n)_n \mid a_n \in A \text{ and } \sup_{n \in \IN} \|a_n \| < \infty \}
\]
for the \cstar-algebra of bounded sequences with coefficients in $A$.
Moreover denote by 
\[
c_0(A) = \set{(a_n)_n\in\ell^\infty(A) \mid \lim_{n\to\infty} \|a_n\|=0 } \subset \ell^\infty(A)
\]
the closed two-sided ideal of sequences converging to zero. 
\begin{defi}
Let $A$ be a \cstar-algebra. The sequence algebra $A_\infty$ of $A$ is 
\[
A_\infty = \ell^\infty(A)/c_0(A).  
\]
\end{defi}
Given a bounded sequence $(a_n)_{n \in \mathbb{N}} \in \ell^\infty(A)$, the norm of the corresponding element in $A_\infty$ is given by
\[
\| [(a_n)_{n \in \mathbb{N}}] \| = \limsup_{n\to \infty} \|a_n \|.
\]
Note moreover that $A$ embeds canonically into $A_\infty$ as (representatives of) constant sequences. We will frequently use this identification of $A$ inside $A_\infty$ in the sequel.

\begin{nota}
We denote by
\[
\ann_\infty(A) = \set{ x\in A_\infty \mid xa=ax=0 \text{ for all } a\in A}
\]
the two-sided annihilator of $A$ inside $A_\infty$. Moreover, we write
\[
D_{\infty,A}=[A\cdot A_\infty\cdot A]\subset A_\infty
\]
for the hereditary subalgebra of $A_\infty$ generated by $A$. Note that the embedding $A \into D_{\infty,A}$ is clearly nondegenerate. Finally, consider also the normalizer of $D_{\infty,A}$ inside $A_\infty$,
\[
\CN(D_{\infty,A},A_\infty) = \set{ x\in A _\infty~|~ xD_{\infty,A}+D_{\infty,A}x \subset D_{\infty,A}}.
\]
We remark that $\ann_\infty(A)$ sits inside $\CN(D_{\infty,A},A_\infty)$ as a closed two-sided ideal.
\end{nota}

The multiplier algebra of $D_{\infty,A}$ admits the following alternative description.

\begin{prop}[cf.~{\cite[Proposition 1.5(1)]{BarlakSzabo15}} and {\cite[1.9(4)]{Kirchberg04}}] 
\label{surjection normalizer multiplier}
Let $A$ be a $\sigma$-unital \cstar-algebra. The canonical $*$-homomorphism $ \CN(D_{\infty,A},A_\infty) \to M(D_{\infty,A})$ given by the universal property of the multiplier algebra is surjective, and its kernel coincides with $\ann_\infty(A)$.
\end{prop}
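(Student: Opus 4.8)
The plan is to treat the kernel statement as a formal consequence of the universal property of $M(D_{\infty,A})$ and to put essentially all the work into surjectivity. Throughout write $D=D_{\infty,A}$.

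\medskip\noindent\textbf{Kernel.}
One checks that $\CN(D,A_\infty)$ is a \cstar-subalgebra of $A_\infty$ containing $D$ as a closed two-sided ideal (immediate from the definition of the normalizer together with the fact that $D$ is a $*$-subalgebra). Hence the map in the statement is precisely the canonical homomorphism $\CN(D,A_\infty)\to M(D)$ attached to this ideal inclusion, and its kernel is automatically $\set{x\in\CN(D,A_\infty): xD=Dx=0}$. Using the density relation $D=[A\cdot A_\infty\cdot A]$ one sees $xD=0\Leftrightarrow xA=0$ and $Dx=0\Leftrightarrow Ax=0$, so this kernel equals $\ann_\infty(A)$, which --- as already recorded in the text --- does lie inside $\CN(D,A_\infty)$.

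\medskip\noindent\textbf{Surjectivity.}
Fix $m\in M(D)$. Since $A$ is $\sigma$-unital, I would pick an increasing approximate unit $(e_k)_{k\in\IN}$ of $A$ with $e_{k+1}e_k=e_k$ for all $k$ (for instance $e_k=f_k(h)$ for a strictly positive $h\in A$ and suitable $f_k\in C_0(0,1]$); by nondegeneracy of $A\hookrightarrow D$ this is also an approximate unit of $D$. Put $c_k=e_k m e_k\in D$, so $\|c_k\|\le\|m\|$, and note that for $i\le k$ one has $c_k e_i=e_k m e_i$ and $e_i c_k=e_i m e_k$ because $e_k e_i=e_i=e_i e_k$. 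Since $m e_i,e_i m\in D$ and $(e_k)_k$ is an approximate unit of $D$, this yields, for each fixed $i$,
\[
\lim_{k\to\infty}\|c_k e_i-m e_i\|=0=\lim_{k\to\infty}\|e_i c_k-e_i m\|.
\]
Now lift each $c_k$ to a sequence $(b_{k,n})_n\in\ell^\infty(A)$, uniformly bounded in $k$ and $n$ (one may arrange $\sup_n\|b_{k,n}\|\le\|m\|+1$ by truncating an initial segment), fix lifts $(\rho^i_n)_n$, $(\lambda^i_n)_n$ of $m e_i,e_i m\in D$, and run a diagonal sequence argument: for each pair $(i,l)\in\IN^2$ the displayed limits provide a $k$-threshold beyond which $\limsup_n\|b_{k,n}e_i-\rho^i_n\|<\tfrac1l$ and $\limsup_n\|e_i b_{k,n}-\lambda^i_n\|<\tfrac1l$, hence an $n$-threshold beyond which the same estimates hold without the $\limsup$; organizing $\IN$ into successive blocks $B_p$ on which $j(n)$ equals a constant $J_p$ --- with $J_p$ and $\min B_p$ chosen large enough to meet all conditions indexed by the first $p$ pairs --- one obtains $j(n)\to\infty$ such that $x:=[(b_{j(n),n})_n]\in A_\infty$ satisfies $x e_i=m e_i$ and $e_i x=e_i m$ in $A_\infty$ for every $i\in\IN$. (Only countably many relations are imposed, so separability of $A$ is not needed here.) Finally, for $d\in D$,
\[
xd=\lim_i x(e_i d)=\lim_i (x e_i)d=\lim_i (m e_i)d=\lim_i m(e_i d)=md\in D,
\]
using the constructed relation $x e_i=m e_i$ for the third equality and the multiplier identity $(m e_i)d=m(e_i d)$ for the fourth; symmetrically $dx=dm\in D$. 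Hence $x\in\CN(D,A_\infty)$ and $x$ maps to $m$ in $M(D)$.

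\medskip\noindent\textbf{Where the difficulty lies.}
The only real obstacle is the surjectivity argument, and within it the choice of \emph{which} relations to force onto the diagonal sequence. Forcing the two families $x e_i=m e_i$ and $e_i x=e_i m$ --- rather than the weaker and more obvious $e_i x e_i=e_i m e_i$ --- is exactly what makes the final display run and shows at once that $x$ normalizes $D$ and induces $m$; and these are achievable precisely because $\set{e_i}$ is countable. Everything else (existence of an approximate unit with $e_{k+1}e_k=e_k$, that it is an approximate unit of $D$, the norm-convergence of $c_k e_i$ and $e_i c_k$, the controlled lifting, and the block bookkeeping in the diagonal argument) is routine.
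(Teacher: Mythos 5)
Your argument is correct. Note that the paper itself gives no proof of this proposition --- it is quoted from \cite{BarlakSzabo15} and \cite{Kirchberg04} --- and your proof is essentially the standard one from those references: the kernel identification is formal once one knows $xD_{\infty,A}=0\Leftrightarrow xA=0$ (which your use of $D_{\infty,A}=[A\cdot A_\infty\cdot A]$ handles), and surjectivity is obtained from a sequential approximate unit $(e_k)$ of the $\sigma$-unital algebra $A$ together with a diagonal-sequence argument forcing the countably many relations $xe_i=me_i$ and $e_ix=e_im$, exactly as you do. (The only blemish is the claim $e_ke_i=e_i$ for $i\le k$, which needs $k>i$ since $e_i$ is not a projection; this has no effect on the limits you take.)
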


Let us also note that the construction of $ D_{\infty, A} $ is compatible with tensoring by the compacts. 

\begin{lemma}[cf.\ {\cite[1.6]{BarlakSzabo15}}] \label{lemmaDomegastable}
Let $ A $ be a \cstar-algebra and let $ \IK(\CH) $ be the algebra of compact operators on a separable Hilbert space $ \CH $. The canonical embedding $ \IK(\CH) \otimes A_\infty \to (\IK(\CH) \otimes A)_\infty $ induces an isomorphism $ \IK(\CH) \otimes D_{\infty, A} 
\cong D_{\infty, \IK(\CH) \otimes A} $. 
\end{lemma}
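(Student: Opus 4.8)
My plan is to reduce everything to matrix algebras: since $\CH$ is separable, $\IK(\CH)$ is the closed increasing union of its finite-dimensional corners, and both constructions $B \mapsto B_\infty$ and $B \mapsto D_{\infty,B}$ commute with matrix amplification. So I would fix an increasing sequence $(q_k)_{k\in\IN}$ of finite-rank projections in $\IK(\CH)$ with $q_k \to \eins$ strongly and identify $q_k\IK(\CH)q_k \otimes B \cong M_k(B)$ for every \cstar-algebra $B$; then $\IK(\CH) \otimes B = \overline{\bigcup_k M_k(B)}$ for $B = A$, $B = A_\infty$ and $B = D_{\infty,A}$.

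First I would recall that the canonical map in the statement, say $\Phi\colon \IK(\CH) \otimes A_\infty \to (\IK(\CH) \otimes A)_\infty$, is induced by the $*$-homomorphism $\IK(\CH) \otimes \ell^\infty(A) \to \ell^\infty(\IK(\CH) \otimes A)$ sending $T \otimes (a_n)_n$ to $(T \otimes a_n)_n$, which carries $\IK(\CH) \otimes c_0(A)$ into $c_0(\IK(\CH) \otimes A)$; nuclearity of $\IK(\CH)$ lets this descend to $\Phi$ on the quotient. The key observation is that for each $k$ the restriction of $\Phi$ to $M_k(A_\infty)$ is precisely the canonical isomorphism $M_k(A_\infty) \xrightarrow{\cong} (M_k(A))_\infty$, which exists because $M_k$ is finite dimensional, so $M_k(\ell^\infty(A)) = \ell^\infty(M_k(A))$ and $M_k(c_0(A)) = c_0(M_k(A))$; in particular the corner $(q_k \otimes \eins)(\IK(\CH) \otimes A)_\infty(q_k \otimes \eins)$ is exactly $(M_k(A))_\infty$ sitting inside $(\IK(\CH) \otimes A)_\infty$. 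Since $\bigcup_k M_k(A_\infty)$ is dense in $\IK(\CH) \otimes A_\infty$ and $\Phi$ is isometric on it, $\Phi$ is isometric, hence injective.

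Next, under these identifications $\Phi$ carries $M_k(D_{\infty,A})$ onto $D_{\infty, M_k(A)} \subseteq (M_k(A))_\infty \subseteq (\IK(\CH)\otimes A)_\infty$: indeed $D_{\infty,-}$ is built from multiplication and closed linear span applied to the embedding of the algebra into its sequence algebra, and all of these commute with matrix amplification, so $M_k(D_{\infty,A}) = M_k([A\, A_\infty\, A]) = [M_k(A)\, M_k(A_\infty)\, M_k(A)]$, which under $M_k(A_\infty)\cong(M_k(A))_\infty$ becomes $[M_k(A)\,(M_k(A))_\infty\, M_k(A)] = D_{\infty, M_k(A)}$. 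Taking the closed increasing union over $k$ and using that $\Phi$ is isometric, I get
\[
\Phi\big(\IK(\CH) \otimes D_{\infty,A}\big) = \overline{\textstyle\bigcup_k D_{\infty, M_k(A)}} \ \subseteq\ (\IK(\CH) \otimes A)_\infty .
\]
It then remains to show that this equals $D_{\infty, \IK(\CH)\otimes A}$. The inclusion ``$\subseteq$'' is immediate from $M_k(A)\subseteq\IK(\CH)\otimes A$ and $(M_k(A))_\infty\subseteq(\IK(\CH)\otimes A)_\infty$. For ``$\supseteq$'' it suffices to handle a single product $czd$ with $c,d\in\IK(\CH)\otimes A$ and $z\in(\IK(\CH)\otimes A)_\infty$, since such products span a dense subspace of $D_{\infty,\IK(\CH)\otimes A}=[(\IK(\CH)\otimes A)(\IK(\CH)\otimes A)_\infty(\IK(\CH)\otimes A)]$. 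Given $\eps>0$, choose $k$ and $c',d'\in M_k(A)$ with $\|czd - c'zd'\|<\eps$; using $c'=(q_k\otimes\eins)c'$, $d'=d'(q_k\otimes\eins)$ and $(q_k\otimes\eins)(\IK(\CH)\otimes A)_\infty(q_k\otimes\eins)=(M_k(A))_\infty$ one gets
\[
c'zd' = c'\,\big[(q_k\otimes\eins) z (q_k\otimes\eins)\big]\,d' \ \in\ M_k(A)\,(M_k(A))_\infty\, M_k(A) \ \subseteq\ D_{\infty,M_k(A)},
\]
and letting $\eps\to 0$ gives $czd\in\overline{\bigcup_k D_{\infty,M_k(A)}}$. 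Combining the two inclusions yields the claimed isomorphism. The only step involving a genuine argument rather than bookkeeping is this last corner computation; the main subtlety to get right is that $\Phi$ is injective and that it really restricts to the canonical identifications $M_k(A_\infty)\cong(M_k(A))_\infty$ and $(q_k\otimes\eins)(\IK(\CH)\otimes A)_\infty(q_k\otimes\eins)=(M_k(A))_\infty$.
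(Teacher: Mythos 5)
Your argument is correct and complete. The paper offers no proof of this lemma beyond the citation of \cite[1.6]{BarlakSzabo15}, and your reduction to the matrix corners $q_k\IK(\CH)q_k\otimes A\cong M_k(A)$ --- exploiting that finite-dimensionality makes $M_k(-)$ commute with $\ell^\infty$, $c_0$, and hence with $(-)_\infty$ and $D_{\infty,-}$, and then using the corner identity $(q_k\otimes\eins)(\IK(\CH)\otimes A)_\infty(q_k\otimes\eins)=(M_k(A))_\infty$ for the inclusion $D_{\infty,\IK(\CH)\otimes A}\subseteq\overline{\bigcup_k D_{\infty,M_k(A)}}$ --- is exactly the standard route and matches the cited argument in spirit. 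The one cosmetic point: the descent of $\Phi$ to the quotient really rests on exactness of $\IK(\CH)$ (so that $\IK(\CH)\otimes c_0(A)$ is the full kernel of $\IK(\CH)\otimes\ell^\infty(A)\to\IK(\CH)\otimes A_\infty$), which nuclearity of course supplies; this is the same observation as in Remark \ref{exactnessrem}.
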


Given a \cstar-algebra equipped with an action of a quantum group $ G $, we shall now discuss how to obtain induced actions 
on the sequence algebras introduced above. 

\subsection{Induced actions - discrete case}

In the case of discrete quantum groups the situation is relatively simple. In fact, if $ G $ is a discrete 
quantum group then the \cstar-algebra $ C_0(G) $ of functions on $ G $ is a \cstar-direct sum of matrix algebras. 
Explicitly, it is of the form 
\[
C_0(G) \cong \bigoplus_{\lambda \in \Lambda} \IK(\CH_\lambda)
\]
where $ \Lambda = \Irr(\check{G}) $ is the set of equivalence classes of irreducible representations of the dual compact quantum group, 
see Remark \ref{Peter-Weyl}.

If $ (A, \alpha) $ is a $ G $-\cstar-algebra, then one of the defining conditions for the 
coaction $ \alpha: A \rightarrow M(C_0(G) \otimes A) $ is that it factorizes over the $ C_0(G) $-relative multiplier 
algebra $ M_{C_0(G)}(C_0(G) \otimes A) $.  
With the notation as above, we have 
\[
M_{C_0(G)}(C_0(G) \otimes A) \cong \prod_{\lambda \in \Lambda} \IK(\CH_\lambda) \otimes A, 
\]
that is, we can identify the relative multiplier algebra with the $ \ell^\infty $-product of the algebras $ \IK(\CH_\lambda) \otimes A $. 
In other words, we have 
\[ 
\alpha: A \to \prod_{\lambda \in \Lambda} \IK(\CH_\lambda) \otimes A \subset M(C_0(G) \otimes A). 
\] 
It follows that applying $ \alpha $ componentwise induces 
a $ * $-homomorphism $ \alpha^\infty: \ell^\infty(A) \to M(C_0(G) \otimes \ell^\infty(A)) $ 
by considering the composition 
\[
\begin{array}{lcl} 
\ell^\infty(A) & \to & \prod_{n \in \IN} \prod_{\lambda \in \Lambda} \IK(\CH_\lambda) \otimes A \\
& \cong & \prod_{\lambda \in \Lambda} \prod_{n \in \IN} \IK(\CH_\lambda) \otimes A \\ 
& \cong & \prod_{\lambda \in \Lambda} \IK(\CH_\lambda) \otimes \prod_{n \in \IN} A \subset M(C_0(G) \otimes \ell^\infty(A)), 
\end{array}
\]
here we use that $ \IK(\CH_\lambda) $ is finite dimensional for all $ \lambda \in \Lambda $ in the second isomorphism. 

\begin{lemma} 
Let $ G $ be a discrete quantum group and let $ (A,\alpha) $ be a $ G $-\cstar-algebra. 
Then the map $ \alpha^\infty: \ell^\infty(A) \to M(C_0(G) \otimes \ell^\infty(A)) $ constructed above 
turns $ \ell^\infty(A) $ into a $ G $-\cstar-algebra. 
\end{lemma}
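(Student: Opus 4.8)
The plan is to transport every defining property of a coaction from $(A,\alpha)$ to $(\ell^\infty(A),\alpha^\infty)$ block by block, exploiting that $C_0(G) = \bigoplus_{\lambda\in\Lambda}\IK(\CH_\lambda)$ is a $c_0$-direct sum of \emph{finite-dimensional} matrix algebras. Write $p_\lambda\in M(C_0(G))$ for the central projection onto the $\lambda$-th summand and set $\alpha_\lambda = (p_\lambda\otimes\id)\circ\alpha\colon A\to\IK(\CH_\lambda)\otimes A$, so that under the identification $M_{C_0(G)}(C_0(G)\otimes A)\cong\prod_\lambda\IK(\CH_\lambda)\otimes A$ one has $\alpha = (\alpha_\lambda)_\lambda$. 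Because $\IK(\CH_\lambda)$ is finite-dimensional, $\IK(\CH_\lambda)\otimes\ell^\infty(A) = \ell^\infty(\IK(\CH_\lambda)\otimes A)$ canonically, so applying $\alpha_\lambda$ entrywise yields a $*$-homomorphism $\alpha_\lambda^\infty\colon\ell^\infty(A)\to\IK(\CH_\lambda)\otimes\ell^\infty(A)$, and $\alpha^\infty = (\alpha_\lambda^\infty)_\lambda$ is precisely the composite described in the text, landing in $\prod_\lambda\IK(\CH_\lambda)\otimes\ell^\infty(A) = M_{C_0(G)}(C_0(G)\otimes\ell^\infty(A))\subseteq M(C_0(G)\otimes\ell^\infty(A))$. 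It is clearly a $*$-homomorphism, and it is isometric for the $\limsup$-norm — hence injective — since each $\alpha_\lambda$ is a compression of the isometry $\alpha$, giving $\sup_\lambda\|\alpha_\lambda(a)\| = \|\alpha(a)\| = \|a\|$; nondegeneracy will follow from the density condition.

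Next I would verify coassociativity. Both $(\Delta\otimes\id)\circ\alpha^\infty$ and $(\id\otimes\alpha^\infty)\circ\alpha^\infty$ take values in $M_{C_0(G)\otimes C_0(G)}(C_0(G)\otimes C_0(G)\otimes\ell^\infty(A))\cong\prod_{\lambda,\mu}\ell^\infty(\IK(\CH_\lambda)\otimes\IK(\CH_\mu)\otimes A)$, where again all matrix algebras occurring are finite-dimensional, so that $\Delta\otimes\id$ and $\id\otimes\alpha$ commute with the passage to bounded sequences. Evaluating both composites on a sequence $(a_n)_n\in\ell^\infty(A)$ and comparing the $(\lambda,\mu)$-block in the $n$-th entry, the desired identity reduces to $(\Delta\otimes\id)\alpha(a_n) = (\id\otimes\alpha)\alpha(a_n)$, which holds for every $n$. (Similarly $(\epsilon\otimes\id)\alpha^\infty = \id$, with $\epsilon$ the bounded counit of $C_0(G)$, reduces entrywise and blockwise to $(\epsilon\otimes\id)\alpha = \id$.)

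It remains to establish the density condition $[\alpha^\infty(\ell^\infty(A))(C_0(G)\otimes\eins)] = C_0(G)\otimes\ell^\infty(A)$, and this is the only genuinely substantial point. Since $C_0(G)\otimes\ell^\infty(A) = \bigoplus_\lambda\IK(\CH_\lambda)\otimes\ell^\infty(A)$ is a $c_0$-direct sum and multiplication by $C_0(G)\otimes\eins$ respects the central projections $p_\lambda\otimes\eins$, it suffices to show for each $\lambda$ that $[\alpha_\lambda^\infty(\ell^\infty(A))(\IK(\CH_\lambda)\otimes\eins)] = \IK(\CH_\lambda)\otimes\ell^\infty(A)$; once every block is exhausted, the closed span over $\lambda\in\Lambda$ recovers the $c_0$-sum. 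Consider the canonical map $\Psi_\lambda\colon\IK(\CH_\lambda)\otimes A\to\IK(\CH_\lambda)\otimes A$, $x\otimes a\mapsto\alpha_\lambda(a)(x\otimes\eins)$ (multiplication by $\IK(\CH_\lambda)$ keeps one inside the summand). Compressing the density condition for $\alpha$ by $p_\lambda\otimes\eins$ shows that $\Psi_\lambda$ has dense range. The extra input — and the main obstacle — is that $\Psi_\lambda$ is actually a \emph{bounded bijection}: this is the operator-algebraic shadow of the fact that, in the algebraic theory of comodule algebras, the canonical map $a\otimes h\mapsto a_{(0)}\otimes a_{(1)}h$ is invertible with inverse $a\otimes h\mapsto a_{(0)}\otimes S(a_{(1)})h$ built from the antipode $S$; the point that needs care is that $S$ is unbounded on $C_0(G)$, but its restriction to the finite-dimensional summand $\IK(\CH_\lambda)$ is automatically bounded, which is exactly what makes $\Psi_\lambda^{-1}$ bounded (equivalently, $\Psi_\lambda$ bounded below). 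Granting this — a standard but slightly fiddly piece of quantum group bookkeeping, and the real content of the lemma — $\Psi_\lambda$ has a bounded inverse, hence so does $\Psi_\lambda^\infty = \alpha_\lambda^\infty(\,\cdot\,)(\IK(\CH_\lambda)\otimes\eins)$ acting entrywise on $\ell^\infty(\IK(\CH_\lambda)\otimes A) = \IK(\CH_\lambda)\otimes\ell^\infty(A)$; here it is essential that the \emph{same} bounded bijection occurs in every coordinate, so that the inverse is uniformly bounded, which would fail if one only knew dense range. Thus $\Psi_\lambda^\infty$ is onto $\IK(\CH_\lambda)\otimes\ell^\infty(A)$, giving the $\lambda$-block of the density condition; summing over $\lambda$ completes the proof that $(\ell^\infty(A),\alpha^\infty)$ is a $G$-\cstar-algebra.
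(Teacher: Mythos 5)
Your overall strategy is the same as the paper's: decompose $C_0(G)=\bigoplus_\lambda \IK(\CH_\lambda)$ into finite-dimensional blocks, note that tensoring with a finite-dimensional algebra commutes with $\ell^\infty$-products, and check injectivity, coassociativity and the density condition blockwise and entrywise. The first two points are dealt with in one line in the paper, exactly as in your verification. The genuine difference is your treatment of the density condition. The paper simply asserts that the block statement $[(p_\lambda C_0(G)\otimes\eins)\,\alpha^\infty(\ell^\infty(A))]=p_\lambda C_0(G)\otimes\ell^\infty(A)$ ``follows from the density condition for $\alpha$'' together with finite-dimensionality, whereas you correctly point out that this is not automatic: a bounded linear map with dense range need not induce a map with dense range on $\ell^\infty$-sequences, since the preimages required for a fixed accuracy can blow up in norm along the sequence. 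What is really needed is that the block Galois map $\Psi_\lambda$ is open onto $\IK(\CH_\lambda)\otimes A$, and your proposed mechanism for this --- invertibility of $\Psi_\lambda$ with an inverse built from the antipode, which is bounded on each finite-dimensional summand even though it is unbounded on all of $C_0(G)$ --- is the right one and is a true, standard fact about actions of discrete quantum groups (in the classical discrete case it reduces to the statement that each block map $\alpha_g$ is an automorphism, which is exactly what makes the entrywise argument work). So your proof is correct in outline and, on the only substantive point, more careful than the paper's. The one debt you leave open --- actually exhibiting $\Psi_\lambda^{-1}$, e.g.\ in the form $y\otimes b\mapsto (S^{-1}\otimes\id)(\alpha_{\bar\lambda}(b))(y\otimes\eins)$, and verifying the two compositions via the antipode identity on the algebraic core $\CD(G)$ --- is real but routine, and the paper does not discharge it either; to make your argument self-contained you should either carry out that computation or cite the bijectivity of the canonical maps for (multiplier) Hopf algebra coactions.
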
 
\begin{proof} Injectivity and coassociativity of $ \alpha^\infty $ follow immediately from the corresponding properties of $ \alpha $. 
For the density condition 
\[
[(C_0(G) \otimes \eins)\alpha^\infty(\ell^\infty(A))] = C_0(G) \otimes \ell^\infty(A)
\]
notice that it suffices to verify
\[
[(p C_0(G) \otimes \eins) \alpha^\infty(\ell^\infty(A))] = pC_0(G) \otimes \ell^\infty(A) 
\] 
for all finite rank (central) projections $ p \in C_0(G) $. This in turn follows from the density condition for $ \alpha $, 
combined with fact that tensoring with finite dimensional 
algebras commutes with taking direct products.
\end{proof}

The map $ \alpha^\infty $ constructed above induces an injective $*$-homomorphism $ \alpha_\infty: A_\infty \to M(C_0(G) \otimes A_\infty) $ 
that fits into the following commutative diagram 

\[
\xymatrix{
\ell^\infty (A) \ar@{->}[r] \ar@{->}[d]^{\alpha^\infty} 
& A_\infty \ar@{->}[d]^{\alpha_\infty} \\
M_{C_0(G)}(C_0(G) \otimes \ell^\infty(A)) \ar@{->}[r] 
& M_{C_0(G)}(C_0(G) \otimes A_\infty) 
     }
\]

Indeed, it suffices to observe that $ \alpha^\infty $ maps $c_0(A) $ 
into $ \prod_{\lambda \in \Irr(G)} \IK(\CH_\lambda) \otimes c_0(A) $. 
Coassociativity and the density 
conditions for $ \alpha_\infty $ are inherited from $ \alpha^\infty$. We therefore obtain the following result.

\begin{lemma} 
Let $ G $ be a discrete quantum group and let $ (A,\alpha) $ be a $ G $-\cstar-algebra. 
Then the map $ \alpha_\infty: A_\infty \to M(C_0(G) \otimes A_\infty) $ turns $ A_\infty $ into a $ G $-\cstar-algebra. 
\end{lemma}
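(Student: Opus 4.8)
The plan is to deduce the statement entirely from what has already been established for $\alpha^\infty$ on $\ell^\infty(A)$, by passing to the quotient by $c_0(A)$. First I would record that, under the identification $M_{C_0(G)}(C_0(G)\otimes B)\cong\prod_{\lambda\in\Lambda}\IK(\CH_\lambda)\otimes B$ valid for any \cstar-algebra $B$, the quotient $*$-homomorphism $\pi\colon\ell^\infty(A)\to A_\infty$ induces the canonical extension $\id\otimes\pi\colon M_{C_0(G)}(C_0(G)\otimes\ell^\infty(A))\to M_{C_0(G)}(C_0(G)\otimes A_\infty)$ to relative multiplier algebras (as discussed after Definition~\ref{defcoaction}), which acts componentwise as $\id_{\IK(\CH_\lambda)}\otimes\pi$ and is therefore surjective with kernel the relative multiplier algebra of $C_0(G)\otimes c_0(A)$. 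Since, as noted just above the lemma, $\alpha^\infty$ maps $c_0(A)$ into $\prod_\lambda\IK(\CH_\lambda)\otimes c_0(A)$, the composite $(\id\otimes\pi)\circ\alpha^\infty$ annihilates $c_0(A)$ and hence factors through $A_\infty$, producing the $*$-homomorphism $\alpha_\infty\colon A_\infty\to M_{C_0(G)}(C_0(G)\otimes A_\infty)$ from the commuting square above.

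Next I would check the three requirements of a coaction. Coassociativity $(\Delta\otimes\id)\circ\alpha_\infty=(\id\otimes\alpha_\infty)\circ\alpha_\infty$ is obtained by restricting the corresponding identity for $\alpha^\infty$ along the surjection $\pi$ (applying the appropriate relative tensor products of $\pi$ to both sides). For the density condition $[(C_0(G)\otimes\eins)\alpha_\infty(A_\infty)]=C_0(G)\otimes A_\infty$, observe that $\id\otimes\pi\colon C_0(G)\otimes\ell^\infty(A)\to C_0(G)\otimes A_\infty$ is surjective and that $(\id\otimes\pi)\alpha^\infty(\ell^\infty(A))=\alpha_\infty(A_\infty)$ by construction; applying $\id\otimes\pi$ to the identity $[(C_0(G)\otimes\eins)\alpha^\infty(\ell^\infty(A))]=C_0(G)\otimes\ell^\infty(A)$ then yields the claim (and in particular nondegeneracy). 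Finally, for injectivity I would invoke the counit $\epsilon\colon C_0(G)\to\IC$ of the discrete quantum group $G$, i.e.\ the character projecting onto the trivial summand; since $(\epsilon\otimes\id)\circ\alpha=\id_A$ by the counit property for coactions, the same holds componentwise, $(\epsilon\otimes\id)\circ\alpha^\infty=\id_{\ell^\infty(A)}$, and passing to the quotient gives $(\epsilon\otimes\id)\circ\alpha_\infty=\id_{A_\infty}$, so $\alpha_\infty$ is injective. (Equivalently, this shows $(\alpha^\infty)^{-1}$ of the relative multiplier algebra of $C_0(G)\otimes c_0(A)$ equals $c_0(A)$, which is exactly what injectivity on the quotient needs.)

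I do not expect a real obstacle: all the substance sits in the preceding construction of $\alpha^\infty$ and in the observation $\alpha^\infty(c_0(A))\subseteq\prod_\lambda\IK(\CH_\lambda)\otimes c_0(A)$. The only points that genuinely need attention are the bookkeeping with $C_0(G)$-relative multiplier algebras — that $\id\otimes\pi$ is well defined and surjective there, and that closed linear spans are preserved when pushing the density condition through $\id\otimes\pi$ — and the short injectivity argument, which is not supplied by the constant-sequence embedding $A\hookrightarrow A_\infty$ and is cleanest via the counit. Everything else is a routine transport of coassociativity and the density condition from $\ell^\infty(A)$ to $A_\infty$ along the quotient map.
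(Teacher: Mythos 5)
Your proposal is correct and follows essentially the same route as the paper: the paper simply observes that $\alpha^\infty$ maps $c_0(A)$ into $\prod_\lambda \IK(\CH_\lambda)\otimes c_0(A)$ and declares that injectivity, coassociativity and density descend to the quotient. Your counit argument for injectivity (using that $(\epsilon\otimes\id)\circ\alpha=\id_A$, which follows from injectivity of $\alpha$ together with coassociativity and $(\epsilon\otimes\id)\circ\Delta=\id$) is a valid and welcome way of making explicit a point the paper leaves unjustified.
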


\subsection{Induced actions - compact case}

Whereas for discrete quantum groups the extension of actions to sequence algebras always yields genuine actions, 
the situation for compact quantum groups 
is more subtle. Already classically, a strongly continuous action of a compact group $ G $ on a \cstar-algebra $ A $ induces 
an action on $ \ell^\infty(A) $ and $ A_\infty $, but these induced actions typically fail to be strongly continuous, 
compare \cite{BarlakSzabo15}. 

We shall address the corresponding problems in the quantum case by using an ad-hoc 
notion of equivariant $*$-homomorphisms into sequence algebras. Our discussion also requires the technical assumption of coexactness.

\begin{defi}
A locally compact quantum group $G$ is exact if the functor of taking reduced crossed products by $ G $ 
is exact. We say that $ G $ is coexact if the dual $ \hat{G} $ is exact. 
\end{defi}

It is well-known that a discrete quantum group $G$ is exact if and only if $ C^*_\red(G) $ is an exact \cstar-algebra,
see \cite[1.28]{VaesVergnioux07}. In other words, a compact quantum group $ G $ is coexact if and only if $ C^\red(G) $ 
is an exact \cstar-algebra. 

\begin{rem} \label{exactnessrem}
Let $A$ and $B$ be \cstar-algebras. If $B$ is exact, then there exists a canonical injective $ * $-homomorphism $B \otimes A_\infty \into (B \otimes A)_\infty $ coming from the following commutative diagram with exact rows
\[
\xymatrix{
0 \ar@{->}[r] & B \otimes c_0(A) \ar@{->}[r] \ar@{->}[d]^\cong & B \otimes \ell^\infty (A) \ar@{->}[r] \ar@{^(->}[d] & B \otimes A_\infty \ar@{->}[r] \ar@{^(->}[d] & 0 \\
0 \ar@{->}[r] & c_0(B \otimes A) \ar@{->}[r] & \ell^\infty(B \otimes A) \ar@{->}[r] 
& (B \otimes A)_\infty \ar@{->}[r] & 0
     }
\]

For similar reasons, we have a chain of natural inclusions $B \otimes M(D_{\infty,A}) \subset M(B \otimes D_{\infty,A}) \subset M(D_{\infty,B \otimes A})$ provided $B$ is exact.
\end{rem}

Now let $ G $ be a compact quantum group and let $ (A, \alpha) $ be a $G$-\cstar-algebra. 
The $ * $-homomorphisms $\alpha: A\to C^\red(G)\otimes A$ induces a $*$-homomorphism
\[
\alpha^\infty: \ell^\infty(A) \to \ell^\infty(C^\red(G) \otimes A), 
\]
obtained by applying $ \alpha $ componentwise, and also a $ * $-homomorphism
\[
\alpha_\infty: A_\infty \to (C^\red(G) \otimes A)_\infty. 
\]
The maps $ \alpha^\infty $ and $ \alpha_\infty $, despite not being coactions in the sense of Definition \ref{defcoaction} in general, turn out to be good enough to obtain a tractable notion of equivariance and suitable crossed products, at least when $G$ is coexact. The reason for this is Remark \ref{exactnessrem}, which is used in the definition below.

\begin{defi} \label{equiv sequence alg}
Let $ G $ be a coexact compact quantum group and let $(A,\alpha)$ and $ (B,\beta) $ be $ G $-\cstar-algebras. 
A $*$-homomorphism $\phi: A \to B_\infty$ is said to be $G$-equivariant if $\beta_\infty \circ \phi = (\id \otimes \phi) \circ \alpha$, 
where both sides are viewed as maps from $ A $ into $ (C^\red(G) \otimes B)_\infty $. 
If $\phi$ is $G$-equivariant, then we also write $\phi:(A,\alpha) \to (B_\infty,\beta_\infty)$.
\end{defi}

Note in particular that if $ \phi: A \to B_\infty $ is a $G$-equivariant $*$-homomorphism, then we 
automatically have $\beta_\infty \circ \phi(A) \subset C^\red(G) \otimes B_\infty$. 

\begin{rem} \label{rem equiv class group}
As indicated above, if $G$ is a compact group and $\alpha:G \curvearrowright A$ is a strongly continuous action on a \cstar-algebra, 
there always exists a (not necessarily strongly continuous) induced action of $G$ on $A_\infty$. If $(B, \beta)$ is another $G$-algebra, 
then it is easy to see that any $*$-homomorphism $\phi:A \to B_\infty$ that is $G$-equivariant in the sense \ref{equiv sequence alg} 
if and only if it is $G$-equivariant in the usual sense. 

Indeed, the equality $ (\id \otimes \phi) \circ \alpha = \beta_\infty \circ \phi $ clearly implies equivariance 
the usual sense. For the converse implication, one notes that if $ \phi $ is equivariant in the usual sense, it 
maps $ A $ automatically into the continuous part of the action 
on $ B_\infty $. Therefore, for $ a \in A $ the equality of $ (\id \otimes \phi) \circ \alpha(a) = \beta_\infty \circ \phi(a) $ 
in $ C(G) \otimes B_\infty = C(G, B_\infty) $ can be checked by evaluating both sides at the points of $ G $. 
\end{rem}

In the quantum setting, we need a substitute of the continuous part of an action in order to define crossed products. 
We shall rely on the structure of compact quantum groups to obtain a construction suitable for the situation at hand.

Recall from Remark \ref{Peter-Weyl} that the dense Hopf $ * $-algebra $ \CO(G) \subset C^\red(G) $ has a 
linear basis of elements of the form $ u^\lambda_{ij} $ where $ \lambda \in \Irr(G) $ and $ 1 \leq i,j, \leq \dim(\lambda) $. 
As explained in Remark \ref{Peter-Weyl2}, the linear functionals $ \omega_{ij}^\lambda \in C^\red(G)^*$ given by 
\[
\omega_{ij}^\lambda (u^\eta_{kl}) = \delta_{\lambda\eta} \delta_{ik} \delta_{jl} 
\]
span the space $ \CD(G) $, which can be viewed as the dense $ * $-subalgebra of $ C_0(\hat{G}) $ given by the algebraic direct sum 
of matrix algebras $ \IK(\CH_\lambda) $ for $ \lambda \in \Irr(G) $. Moreover, in this picture 
the elements $ \omega_{ij}^\lambda $ are matrix units in $ \IK(\CH_\lambda) $, that is, 
\[
\omega^\lambda_{ij} \omega^\eta_{kl} = \delta_{\lambda \eta} \delta_{jk} \omega^{\lambda}_{il} 
\]

Let $(A,\alpha) $ be a $G$-\cstar-algebra. Recall from Remark \ref{spectralsub} that $ A $ becomes a right $ \CD(G) $-module 
with the action 
\[
a \cdot \omega = (\omega \otimes \id) \circ \alpha(a). 
\] 
It is crucial for our purposes that such module structures also exist on $ \ell^\infty(A) $ and $ A_\infty $. 
Indeed, note that applying $ \omega \otimes \id $ in each component we obtain slice maps 
$ \ell^\infty(C^\red(G) \otimes A) \to \ell^\infty(A) $ and $ (C^\red(G) \otimes A)_\infty \to A_\infty $, 
which, by slight abuse of notation, will again denoted by $ \omega \otimes \id $ in the sequel. 
We will also continue to use the notation $ a \cdot \omega $ for the module structures obtained in this way.

In analogy with the constructions in Remark \ref{spectralsub} we shall now define spectral 
subspaces of $ \ell^\infty(A) $ and $ A_\infty $, and use this to define corresponding continuous parts. Although the settings differ somewhat, this is similar to Kishimoto's definition of equicontinuous sequences for flows, cf.\ \cite{Kishimoto96, Kishimoto03}.

\begin{defi} \label{defspectralcont}
Let $G$ be a coexact compact quantum group and let $ (A,\alpha) $ be a $G$-\cstar-algebra. 
The spectral subspaces of $ \ell^\infty(A) $ and $ A_\infty $ with respect to $ \alpha $ are defined by 
\[
\CS(\ell^\infty(A)) = \ell^\infty(A) \cdot \CD(G), \qquad \CS(A_\infty) = A_\infty \cdot \CD(G), 
\] 
respectively. The continuous parts of $ \ell^\infty(A) $ and $ A_\infty $ with respect to $\alpha$ are defined by 
\[
\ell^{\infty, \alpha}(A) = [\CS(\ell^\infty(A))] \subset \ell^\infty(A), \qquad A_{\infty, \alpha} = [\CS(A_\infty)] \subset A_\infty, 
\]
respectively. 
\end{defi}

At this point it is not immediately obvious that the subspaces in Definition \ref{defspectralcont} are closed 
under multiplication. We will show this further below.

\begin{lemma} \label{spectralsurjective}
The canonical map $ \CS(\ell^\infty(A)) \to \CS(A_\infty) $ is surjective. 
\end{lemma}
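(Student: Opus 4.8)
The plan is to exploit the fact that the module structure on $A_\infty \cdot \CD(G)$ is induced from the one on $\ell^\infty(A)\cdot\CD(G)$ by the quotient map $\ell^\infty(A) \to A_\infty$, so that surjectivity of $\CS(\ell^\infty(A)) \to \CS(A_\infty)$ reduces to a standard lifting argument. More precisely, a general element of $\CS(A_\infty) = A_\infty \cdot \CD(G)$ is a finite sum $\sum_k x_k \cdot \omega_k$ with $x_k \in A_\infty$ and $\omega_k \in \CD(G)$; since $\CD(G)$ is spanned by the matrix units $\omega^\lambda_{ij}$, we may even assume each $\omega_k$ is of this form. It therefore suffices to show that every element of the form $x \cdot \omega$ with $x \in A_\infty$ and $\omega \in \CD(G)$ lifts to an element of $\CS(\ell^\infty(A))$.

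So first I would lift $x \in A_\infty$ to a bounded sequence $(a_n)_n \in \ell^\infty(A)$. Then I would consider the element $(a_n)_n \cdot \omega \in \ell^\infty(A)$, which by definition lies in $\CS(\ell^\infty(A))$. The key point is that the slice map $\omega \otimes \id$ is compatible with the quotient map $\ell^\infty(A)\to A_\infty$: applying $\omega\otimes\id$ componentwise to $\alpha^\infty((a_n)_n)$ and then passing to the quotient gives the same result as applying $\alpha_\infty$ to $x = [(a_n)_n]$ and then slicing with $\omega\otimes\id$. This is essentially the commutativity of the diagram relating $\alpha^\infty$ and $\alpha_\infty$ (using that $\alpha^\infty$ carries $c_0(A)$ into $\ell^\infty(C^\red(G)\otimes A) \cap (\ell^\infty(C^\red(G)) \odot c_0(A))^{-}$, hence the slice maps descend to the sequence algebras); concretely one just notes $\omega\otimes\id$ is a bounded linear map that kills $c_0(C^\red(G)\otimes A)$ and sends $\ell^\infty(C^\red(G)\otimes A)$ to $\ell^\infty(A)$, so it induces $(C^\red(G)\otimes A)_\infty \to A_\infty$ fitting into the obvious commuting square. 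Hence the image of $(a_n)_n \cdot \omega$ under $\ell^\infty(A)\to A_\infty$ is exactly $x\cdot\omega$, proving surjectivity on these spanning elements, and therefore on all of $\CS(A_\infty)$ by linearity.

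The main thing to be careful about — and what I expect to be the only genuine subtlety — is the bookkeeping around the slice maps $\omega\otimes\id$ at the level of the non-continuous ``actions'' $\alpha^\infty$ and $\alpha_\infty$: one must check that $\omega \otimes \id$ genuinely makes sense as a map $(C^\red(G)\otimes A)_\infty \to A_\infty$ and commutes with the componentwise version on $\ell^\infty$, which is where coexactness of $G$ (via Remark \ref{exactnessrem}) is implicitly used to make the relevant inclusions behave well. Once that compatibility is recorded, the argument is a routine lift-and-push computation with no analytic content beyond boundedness of slice maps. I would keep the proof to a few lines: lift $x$, form $(a_n)_n\cdot\omega$, invoke the compatibility of slicing with the quotient, and conclude by linearity.
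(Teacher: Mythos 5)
Your proof is correct and is essentially the paper's argument: lift a representative to $\ell^\infty(A)$ and act by $\CD(G)$ there, using that the right $\CD(G)$-module structure on $A_\infty$ is induced componentwise from the one on $\ell^\infty(A)$. The paper streamlines this slightly by observing that $x = x\cdot p = (p\otimes\id)\alpha_\infty(x)$ for a single finite-rank idempotent $p\in\CD(G)$, so one lifts $x$ once and multiplies by $p$ rather than decomposing into summands $x_k\cdot\omega_k$; note also that the slice-map compatibility you flag as the main subtlety requires only that $\omega\otimes\id$ is bounded and preserves $c_0$, not coexactness of $G$.
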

\begin{proof} Let $ x \in \CS(A_\infty) $. Then we can write $ x = x \cdot p = (p \otimes \id) \alpha_\infty(x) $ for some 
finite rank idempotent $ p \in \CD(G) $. If $ \tilde{x} \in \ell^\infty(A) $ is any lift of $ x $, then 
$ \tilde{x} \cdot p $ is a lift of $ x $ as well, which in addition is contained in $ \CS(\ell^\infty(A)) $.
\end{proof} 

It follows from Lemma \ref{spectralsurjective} that the canonical map $\ell^{\infty, \alpha}(A) \to A_{\infty,\alpha}$ is surjective. 

\begin{prop} \label{descr cont part}
Let $G$ be a coexact compact quantum group and let $ (A,\alpha) $ be a $G$-\cstar-algebra. Then we have 
\[
\begin{array}{lcl}
\CS(\ell^\infty(A)) &=& \{x \in \ell^\infty(A) \mid \alpha^\infty(x) \in \CO(G) \odot \ell^\infty(A) \}; \\
\CS(A_\infty) &=& \{x \in A_\infty \mid \alpha_\infty(x) \in \CO(G) \odot A_\infty \}. 
\end{array}
\]
Moreover, both $ \ell^{\infty, \alpha}(A) $ and $ A_{\infty, \alpha} $ are $G$-\cstar-algebras in a canonical way. 
\end{prop}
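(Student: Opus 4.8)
The plan is to establish the two displayed identities first and then read off the algebra structure from them. A useful preliminary observation is that, by the Schur orthogonality relations and our choice of diagonal $ F_\lambda $, the functional $ \theta_\lambda $ of Remark \ref{spectralsub} is simply $ \sum_j \omega^\lambda_{jj} $, the central projection of $ \CD(G) $ onto the block $ \IK(\CH_\lambda) $; thus $ p_\lambda(a) = a \cdot \theta_\lambda $, and since every element of $ \CD(G) $ is a finite sum of matrix units, it is dominated by one of the finite rank projections $ p_F = \sum_{\lambda \in F} \theta_\lambda $ with $ F \subset \Irr(G) $ finite. I would then prove the $ \ell^\infty $-identity as follows. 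For ``$\subseteq$'', $ \CS(\ell^\infty(A)) $ is spanned by elements $ x \cdot \omega^\lambda_{ij} $ all of whose sequence entries lie in the single spectral subspace $ A_\lambda \subseteq A $; because $ \CO(G)_\lambda $ is finite dimensional and the coefficient functionals $ \omega^\lambda_{kl} $ are bounded on $ C^\red(G) $, the componentwise expansions $ \alpha(x_n) = \sum_{kl} u^\lambda_{kl} \otimes (x_n)_{kl} $ assemble into $ \alpha^\infty(x \cdot \omega^\lambda_{ij}) \in \CO(G)_\lambda \odot \ell^\infty(A) $, and summing over the spanning elements gives the inclusion. For ``$\supseteq$'', if $ \alpha^\infty(x) \in \CO(G) \odot \ell^\infty(A) $, choose a finite $ F $ with $ \alpha^\infty(x) \in \bigoplus_{\lambda \in F} \CO(G)_\lambda \odot \ell^\infty(A) $; then each $ x_n $ lies in $ \bigoplus_{\lambda \in F} A_\lambda $, so $ x_n = (p_F \otimes \id)\alpha(x_n) $ and hence $ x = x \cdot p_F \in \ell^\infty(A) \cdot \CD(G) $.

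For the $ A_\infty $-identity the inclusion ``$\subseteq$'' should follow by lifting $ x \in \CS(A_\infty) $ to some $ \tilde x \in \CS(\ell^\infty(A)) $ via Lemma \ref{spectralsurjective}, applying the $ \ell^\infty $-case to $ \tilde x $, and mapping the resulting finite expansion forward along $ \id \otimes q\colon C^\red(G) \otimes \ell^\infty(A) \to C^\red(G) \otimes A_\infty \hookrightarrow (C^\red(G) \otimes A)_\infty $, where the last embedding is the one from Remark \ref{exactnessrem}. The reverse inclusion is the step I expect to be the main obstacle: $ \alpha_\infty $ is not a genuine coaction and the counit of $ \CO(G) $ does not extend continuously to $ C^\red(G) $, so one cannot simply slice by $ p_F $ and appeal to a coaction identity. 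Instead, given $ x $ with $ \alpha_\infty(x) \in \CO(G) \odot A_\infty $, I would choose representatives and a finite $ F $ such that $ \alpha(\tilde x_n) = y_n + c_n $ with $ y_n \in \bigoplus_{\lambda \in F} \CO(G)_\lambda \odot A $ --- here coexactness of $ G $ is used essentially, via the identification $ c_0(C^\red(G) \otimes A) = C^\red(G) \otimes c_0(A) $ of Remark \ref{exactnessrem}, to ensure $ \|c_n\| \to 0 $ (and not merely $ c_n \to 0 $ in the quotient). Applying the bounded spectral truncation $ P_F \otimes \id $, where $ P_F\colon C^\red(G) \to \bigoplus_{\lambda \in F} \CO(G)_\lambda $ is the truncation for the translation action, one obtains $ \alpha\big(\tilde x_n - (p_F \otimes \id)\alpha(\tilde x_n)\big) = \big((\id - P_F) \otimes \id\big)(c_n) $, which has norm at most $ (1 + \|P_F\|)\|c_n\| \to 0 $ since $ \alpha $ is isometric; hence $ x = x \cdot p_F $ equals the image of $ \tilde x \cdot p_F \in \CS(\ell^\infty(A)) $ and so lies in $ \CS(A_\infty) $.

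Finally, the identities give the algebra statement. They show the $ \lambda $-spectral subspace of $ \ell^\infty(A) $ is $ \ell^\infty(A_\lambda) $, so $ \CS(\ell^\infty(A)) = \text{alg-}\bigoplus_\lambda \ell^\infty(A_\lambda) $; since $ \CS(A) = \text{alg-}\bigoplus_\lambda A_\lambda $ is a $ * $-subalgebra of $ A $ and the fusion inclusions $ A_\lambda A_\mu \subseteq \bigoplus_{\nu \subset \lambda \otimes \mu} A_\nu $ are uniform in the sequence variable, $ \CS(\ell^\infty(A)) $ is a $ * $-subalgebra and its closure $ \ell^{\infty,\alpha}(A) $ a \cstar-algebra (and likewise $ A_{\infty,\alpha} $, via the surjection of Lemma \ref{spectralsurjective}). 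The canonical coaction is the restriction of $ \alpha^\infty $ (resp.\ $ \alpha_\infty $): the ``$\subseteq$''-computation actually yields $ \alpha^\infty(\CS(\ell^\infty(A))) \subseteq \CO(G) \odot \CS(\ell^\infty(A)) $, so by continuity $ \alpha^\infty $ maps $ \ell^{\infty,\alpha}(A) $ into $ C^\red(G) \otimes \ell^{\infty,\alpha}(A) $; injectivity and coassociativity pass from $ \alpha $ componentwise on the dense subalgebra, and the Podle\'s density condition reduces to its algebraic form, which follows from the Hopf-algebraic identity $ 1 \otimes z_{ki} = \sum_m \big(S(u^\lambda_{im}) \otimes 1\big)\alpha^\infty(z_{km}) $ for $ z \in \ell^\infty(A_\lambda) $, valid componentwise because $ \sum_m S(u^\lambda_{im}) u^\lambda_{ml} = \delta_{il}1 $. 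Transporting these facts along the $ G $-equivariant surjection $ \CS(\ell^\infty(A)) \to \CS(A_\infty) $ then establishes that $ A_{\infty,\alpha} $ is a $ G $-\cstar-algebra as well.
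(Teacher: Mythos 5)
Your proof is correct and follows the same overall strategy as the paper: identify $\CS(\ell^\infty(A))$ with the set of elements whose image under $\alpha^\infty$ lies in $\CO(G)\odot\ell^\infty(A)$ by slicing with the bounded functionals $\theta_\lambda$ (which agree with the counit on the relevant spectral blocks), use finite-dimensionality of $\CO(G)_F$ for the converse, and then read off the $*$-algebra structure, coassociativity and the Podle\'s density condition (your coordinate identity $\eins\otimes z_{ki}=\sum_m (S(u^\lambda_{im})\otimes\eins)\alpha^\infty(z_{km})$ is exactly the paper's Sweedler-notation computation $\eins\otimes x = S(x_{(-2)})x_{(-1)}\otimes x_{(0)}$).

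The one place where you genuinely diverge is the inclusion $\{x\in A_\infty \mid \alpha_\infty(x)\in\CO(G)\odot A_\infty\}\subseteq\CS(A_\infty)$. The paper dismisses this with ``the same argument as in the case $\ell^\infty(A)$,'' but, as you correctly observe, the argument is not literally the same: a lift $\tilde{x}$ of $x$ only satisfies $\alpha(\tilde{x}_n)\in\CO(G)_F\odot A$ up to a $c_0$-error $c_n$, and slicing by $\CD(G)$ does not separate points of $A_\infty$, so one cannot simply apply the counit identity termwise. Your fix --- applying the bounded truncation $P_F=(\theta_F\otimes\id)\circ\Delta$, using coassociativity to identify $(P_F\otimes\id)\circ\alpha=\alpha(\,\cdot\,\theta_F)$, and then invoking isometry of $\alpha$ to conclude $\|\tilde{x}_n-\tilde{x}_n\cdot\theta_F\|\to 0$ --- is exactly the right way to close this gap, and is more careful than what the paper records. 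One minor quibble: you attribute the convergence $\|c_n\|\to 0$ to coexactness via $c_0(C^\red(G)\otimes A)\cong C^\red(G)\otimes c_0(A)$; in fact $\|c_n\|\to 0$ is automatic because $c_n$ represents $0$ in the quotient $(C^\red(G)\otimes A)_\infty$, and for the finite-dimensional pieces $\CO(G)_F\odot A_\infty$ the embedding into $(C^\red(G)\otimes A)_\infty$ is injective without exactness (the coefficient functionals $\omega^\lambda_{ij}$ are bounded). Exactness is what makes the full inclusion $C^\red(G)\otimes A_\infty\into(C^\red(G)\otimes A)_\infty$ of Remark \ref{exactnessrem} available, which is how the paper phrases the definition of equivariance, but it is not the crux of this particular estimate. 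This does not affect the validity of your argument.
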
 
\begin{proof}
Let us consider first the assertions for $ \CS(\ell^\infty(A)) $. By construction, for an element $ x \in \CS(\ell^\infty(A)) $ satisfying 
$\alpha^\infty(x) \in \CO(G) \odot \ell^\infty(A)$ we can write
\[
\alpha^\infty(x) = \sum_{\lambda \in F} \sum_{i,j} u^\lambda_{ij} \otimes x^\lambda_{ij} 
\] 
for some finite set $ F \subset \Irr(G) $ and elements $ x^\lambda_{ij} \in \ell^\infty(A) $. 
By the definition of the counit $ \epsilon: \CO(G) \to \mathbb{C} $, we see that 
applying $ \omega = \sum_{\lambda \in F} \sum_{i = 1}^{\dim(\lambda)} \omega^\lambda_{ii} \in \CD(G) \subset C^\red(G)^* $ in the first tensor factor gives 
\[
x \cdot \omega = (\omega \otimes \id) \circ \alpha^\infty(x) = (\epsilon \otimes \id) \circ \alpha^\infty(x) = x.
\] 
This means $ \{x \in \ell^\infty(A) \mid \alpha^\infty(x) \in \CO(G) \odot \ell^\infty(A) \} \subset \CS(\ell^\infty(A)) $. 
 
Conversely, write $ x \in \CS(\ell^\infty(A)) $ as a finite sum $ x = \sum_i y^i \cdot \omega^i $ 
of some elements $ \omega^i \in \CD(G) $ and $ y^i \in \ell^\infty(A) $. We may assume without loss of generality 
that each $ \omega^i $ is contained in $ \IK(\CH_{\lambda_i}) $ for some $ \lambda_i \in \Irr(G) $. 
Let $ F \subset \Irr(G) $ be the finite subset consisting of all $ \lambda_j $. 
Writing $ x = (x_n)_{n \in \mathbb{N}} $ and $ y = (y_n)_{n \in \mathbb{N}} $ 
this means that each $ x_n = \sum_i y^i_n \cdot \omega^i $ is contained in the 
subspace $ \CS(A)_F = A \cdot \CO(G)_F \subset A \cdot \CO(G) $, where $ \CO(G)_F = \sum_i \CO(G)_{\lambda_i} $. 
Since $ \CO(G)_F $ is finite dimensional, we conclude
\[
\alpha^\infty(x) \in \CO(G) \odot \ell^\infty(A). 
\]
Here we use that the construction of $\ell^\infty $-products is compatible with tensoring by finite dimensional spaces. 
We conclude $ \CS(\ell^\infty(A)) \subset \{x \in \ell^\infty(A) \mid \alpha^\infty(x) \in \CO(G) \odot \ell^\infty(A) \} $.
 
The corresponding assertion for $ \CS(A_\infty) $ is obtained in a similar way. According to Lemma \ref{spectralsurjective}, we know 
that $ x \in \CS(A_\infty) $ is represented by an element $ \tilde{x} \in \CS(\ell^\infty(A)) $, so the above argument 
shows $ \alpha_\infty(x) \in \{z \in A_\infty \mid \alpha_\infty(z) \in \CO(G) \odot A_\infty \} $ by construction 
of $ \alpha_\infty $. Conversely, if $ x $ satisfies $ \alpha_\infty(x) \in \CO(G) \odot A_\infty $ 
then the same argument as in the case $ \ell^\infty(A) $ above shows $ x \in \CS(A_\infty) $.

As a consequence of these considerations we obtain in particular that $ \CS(\ell^\infty(A)) $ and $ \CS(A_\infty) $ 
are $ * $-algebras, and hence $ \ell^{\infty, \alpha}(A) \subset \ell^\infty(A) $and $ A_{\infty, \alpha} \subset A_\infty $ 
are \cstar-subalgebras. 

It remains to show that these \cstar-algebras are $G$-\cstar-algebras in a canonical way, with coactions induced 
by $ \alpha^\infty $ and $ \alpha_\infty $, respectively. 

Let us again first consider the case $ \ell^{\infty,\alpha}(A) $. From coassociativity of $ \alpha $ we obtain that 
$ \alpha^\infty $ maps $ \CS(\ell^\infty(A)) $ to $ \CO(G) \odot \CS(\ell^\infty(A)) $. Therefore 
it induces a $ * $-homomorphism $ \ell^{\infty, \alpha}(A) \to C^\red(G) \otimes \ell^{\infty, \alpha}(A) 
\subset C^\red(G) \otimes \ell^\infty(A) $, which we will again denote by $ \alpha^\infty $. 
Injectivity of the latter map is clear. Similarly, the coaction 
identity $ (\id \otimes \alpha^\infty) \alpha^\infty = (\Delta \otimes \id) \alpha^\infty $ follows immediately from the coaction 
identity for $ \alpha $. For the density condition note that we can write $ \eins \otimes x = S(x_{(-2)}) x_{(-1)} \otimes x_{(0)} $ 
for $ x \in \CS(\ell^\infty(A)) $, using the Hopf algebra structure of $ \CO(G) $, and the 
Sweedler notation $ \alpha^\infty(x) = x_{(-1)} \otimes x_{(0)} $. Hence 
\[ 
(\CO(G) \odot \eins) \alpha^\infty(\CS(\ell^\infty(A))) = \CO(G) \odot \CS(\ell^\infty(A)), 
\] 
which implies $ [(C^\red(G) \otimes \eins) \alpha^\infty(\ell^{\infty, \alpha}(A))] 
= C^\red(G) \otimes \alpha^\infty(\ell^{\infty, \alpha}(A)) $ upon taking closures.

The case $ A_{\infty, \alpha} $ is analogous. The considerations for $ \S(\ell^\infty(A)) $ above 
and Lemma \ref{spectralsurjective} imply that $ \alpha_\infty $ maps 
$ \CS(A_\infty) $ to $ \CO(G) \odot \CS(A_\infty) $. Therefore 
it induces a $ * $-homomorphism $ A_{\infty, \alpha} \to C^\red(G) \otimes A_{\infty, \alpha}
\subset C^\red(G) \otimes A_\infty $, which we will again denote by $ \alpha_\infty $. 
Coassociativity and density conditions are inherited from the corresponding properties 
of the coaction $ \alpha^\infty: \ell^{\infty, \alpha}(A) \rightarrow C^\red(G) \otimes \ell^{\infty, \alpha}(A) $. 
\end{proof}

Using Proposition \ref{descr cont part} we obtain an alternative way to describe the notion 
of equivariance introduced in Definition \ref{equiv sequence alg}

\begin{prop}
Let $G$ be a coexact compact quantum group and let $(A,\alpha)$ and $(B, \beta)$ be $G$-\cstar-algebras. For a $*$-homomorphism 
$\phi: A \to B_\infty$, the following are equivalent:
\begin{itemize}
\item[a)] $\phi:A \to B_\infty$ is $G$-equivariant;
\item[b)] $\phi(a \cdot \omega) = \phi(a) \cdot \omega$ for all $\omega \in \CD(G)$ and $a \in A$;
\item[c)] $\phi(A) \subset B_{\infty,\beta}$ and $\phi:(A,\alpha) \to (B_{\infty,\beta},\beta_\infty)$ is $G$-equivariant in the usual sense.
\end{itemize}
\end{prop}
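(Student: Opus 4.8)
The plan is to establish the cycle of implications a) $\Rightarrow$ b) $\Rightarrow$ c) $\Rightarrow$ a).

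For a) $\Rightarrow$ b), I would apply the slice map $\omega \otimes \id$ to both sides of the defining identity $\beta_\infty \circ \phi = (\id \otimes \phi) \circ \alpha$ for each $\omega \in \CD(G)$, where $\omega \otimes \id$ on $(C^\red(G) \otimes B)_\infty$ is the componentwise slice map (as introduced before Definition \ref{defspectralcont}). The left-hand side yields $\phi(a) \cdot \omega$ by definition of the $\CD(G)$-module structure on $B_\infty$, and the right-hand side yields $(\omega \otimes \id)\big((\id \otimes \phi)(\alpha(a))\big)$. The only thing to verify is that the slice map intertwines $\id \otimes \phi$ with $\phi$, i.e.\ $(\omega \otimes \id) \circ (\id \otimes \phi) = \phi \circ (\omega \otimes \id)$ on $C^\red(G) \otimes A$; this is clear on elementary tensors and extends by continuity since $\phi$ and both slice maps are bounded. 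Thus $\phi(a) \cdot \omega = \phi(a \cdot \omega)$, which is b).

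For b) $\Rightarrow$ c), note first that b) immediately gives $\phi\big(\CS(A)\big) \subseteq \CS(B_\infty)$, since $\CS(A) = A \cdot \CD(G)$ (Remark \ref{spectralsub}) and $\CS(B_\infty) = B_\infty \cdot \CD(G)$ (Definition \ref{defspectralcont}); taking closures and using density of $\CS(A)$ in $A$ yields $\phi(A) \subseteq B_{\infty,\beta}$. For the equivariance identity, both $\beta_\infty \circ \phi$ and $(\id \otimes \phi) \circ \alpha$ are bounded $*$-homomorphisms $A \to (C^\red(G) \otimes B)_\infty$ — here coexactness of $G$, via Remark \ref{exactnessrem}, is what makes $\id \otimes \phi$ meaningful into the sequence algebra — so it suffices to check the identity on each spectral subspace $A_\lambda$, $\lambda \in \Irr(G)$. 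For $a \in A_\lambda$ one has $\alpha(a) = \sum_{i,j} u^\lambda_{ij} \otimes (a \cdot \omega^\lambda_{ij})$ by definition of $A_\lambda$ and the duality between the bases $\{u^\lambda_{ij}\}$ and $\{\omega^\lambda_{ij}\}$ (Remarks \ref{Peter-Weyl}, \ref{Peter-Weyl2}), hence $(\id \otimes \phi)(\alpha(a)) = \sum_{i,j} u^\lambda_{ij} \otimes \phi(a \cdot \omega^\lambda_{ij}) \in \CO(G)_\lambda \odot B_\infty$. On the other hand $\phi(a) \in \CS(B_\infty)$, so by Proposition \ref{descr cont part} $\beta_\infty(\phi(a)) \in \CO(G) \odot B_\infty$; slicing by the functionals $\omega^\mu_{kl}$ and using b) shows that the components with $\mu \neq \lambda$ vanish and that $\beta_\infty(\phi(a)) = \sum_{i,j} u^\lambda_{ij} \otimes \phi(a \cdot \omega^\lambda_{ij})$ as well. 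Since the $\omega^\lambda_{ij}$ separate points of $\CO(G) \odot B_\infty$, the two expressions coincide, and c) follows by linearity and continuity. Finally, c) $\Rightarrow$ a) is a matter of unravelling definitions: under the hypothesis $\phi(A) \subseteq B_{\infty,\beta}$, the maps $\beta_\infty$ and $(\id \otimes \phi) \circ \alpha$ of Definition \ref{equiv sequence alg} are precisely the compositions of their counterparts in c) with the canonical inclusion $C^\red(G) \otimes B_{\infty,\beta} \hookrightarrow (C^\red(G) \otimes B)_\infty$.

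The main obstacle I anticipate is bookkeeping in the step b) $\Rightarrow$ c): one must ensure that the componentwise slice maps on sequence algebras, the inclusion $C^\red(G) \otimes B_\infty \hookrightarrow (C^\red(G) \otimes B)_\infty$ coming from coexactness, and the description of the continuous part in Proposition \ref{descr cont part} are mutually compatible, and that coexactness is invoked exactly where it is needed. The algebraic heart of the argument — that b) forces $\beta_\infty(\phi(a))$ and $(\id \otimes \phi)(\alpha(a))$ to agree after slicing — is routine once the spectral decomposition of $A$ from Remark \ref{spectralsub} is available.
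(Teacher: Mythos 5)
Your proposal is correct and follows essentially the same route as the paper: slicing the equivariance identity by $\omega\otimes\id$ for a) $\Rightarrow$ b), using $\phi(\CS(A))\subset\CS(B_\infty)$ together with Proposition \ref{descr cont part} and a slice-and-separate argument on the spectral subspaces for b) $\Rightarrow$ c), and unravelling definitions for c) $\Rightarrow$ a). The only difference is that you spell out the "straightforward to check" step (that agreement of all slices $\omega^\mu_{kl}\otimes\id$ forces equality in $\CO(G)\odot B_\infty$), which the paper leaves implicit.
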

\begin{proof}
$a) \Rightarrow b):$ Let $ \omega \in \CD(G)$ and $a \in A$, and recall $ \CD(G) \subset C^\red(G)^* $. 
Applying $ \omega \otimes \id $ to both sides of the 
equality $(\id \otimes \phi) \circ \alpha(a) = \beta_\infty \circ \phi(a) $, we obtain 
\[
\begin{array}{lclcl}
\phi(a \cdot \omega) &=& (\id \otimes \phi)(\omega \otimes \id)\alpha(a) \\
&=& (\omega \otimes \id)(\id \otimes \phi)\alpha(a) \\
&=& (\omega \otimes \id) \beta_\infty \circ \phi(a) \\
&=& \phi(a) \cdot \omega.
\end{array}
\]

$b) \Rightarrow c): $ As $\phi(a \cdot \omega) = \phi(a) \cdot \omega $ for all $ \omega \in \CD(G)$ and $a \in A$, 
it follows that $\phi(\CS(A)) \subset \CS(B_\infty)$. Hence, $ \beta_\infty \circ \phi $ maps 
$\CS(A)$ into $\CO(G) \odot \CS(B_\infty)$. For $a \in \CS(A)$ and $ \omega \in \CD(G)$ we therefore compute
\[
(\omega \otimes \id)\beta_\infty\phi(a) = \phi(a) \cdot \omega = \phi(a \cdot \omega) = (\omega \otimes \id)(\id \otimes \phi)\alpha(a).
\]
It is now straightforward to check that $\beta_\infty\circ \phi(a) = (\id \otimes \phi)\circ \alpha(a)$ 
for all $a \in \CS(A)$. As $\CS(A) \subset A$ is dense, we conclude that $\phi: A\to B_{\infty, \beta}$ is $G$-equivariant.
 
$c) \Rightarrow a):$ This implication follows immediately from the definitions.
\end{proof}

We shall now define crossed products of induced actions on sequence algebras. 

\begin{defi} \label{deficrossedpr}
Let $G$ be a coexact compact quantum group and let $(A,\alpha)$ be a $G$-\cstar-algebra. We define 
\[ 
G \ltimes_{\alpha_\infty, \red} A_\infty = G\ltimes_{\alpha_\infty, \red} A_{\infty, \alpha}, 
\] 
that is, $ G \ltimes_{\alpha_\infty, \red} A_\infty $ is defined to be the crossed product 
of the continuous part of $ A_\infty $ with respect to the 
coaction $ \alpha_\infty: A_{\infty, \alpha} \rightarrow C^\red(G) \otimes A_{\infty, \alpha} $. 
In a similar way we define 
\[ 
G\ltimes_{\alpha^\infty, \red} \ell^\infty(A) = G \ltimes_{\alpha^\infty, \red} \ell^{\infty, \alpha}(A). 
\] 
\end{defi} 

\begin{rem} 
The notation introduced in Definition \ref{deficrossedpr} will allow us to unify our exposition of several results in subsequent sections. 
Remark that the crossed products $ G \ltimes_{\alpha_\infty, \red} A_\infty $ and $ G \ltimes_{\alpha^\infty, \red} \ell^\infty(A) $ 
carry honest $ \check{G} $-\cstar-algebra structures given by the dual actions. 
\end{rem}

At a few points we will need a notion of equivariance for $ * $-homomorphisms with 
target $ D_{\infty,B} $ or $ M(D_{\infty,B}) $. 

Let $G$ be a coexact compact quantum group and $ (B, \beta) $ be a $G$-\cstar-algebra. 
Note that nondegeneracy of the $*$-homomorphism $\beta:B \to C^\red(G) \otimes B$ implies that 
\[
\beta_\infty(D_{\infty,B}) \subset D_{\infty, C^\red(G) \otimes B} = (C^\red(G) \otimes B) (C^\red(G) \otimes B)_\infty (C^\red(G) \otimes B)
\]
is a nondegenerate \cstar-subalgebra. Hence $ \beta_\infty $ induces a $ * $-homomorphism 
\[ 
M(D_{\infty, \beta}) \rightarrow M(D_{\infty, C^\red(G) \otimes B}), 
\]
which we will again denote by $ \beta_\infty $.

\begin{defi} \label{defequivmdinfty}
Let $G$ be a coexact compact quantum group and let $ (A, \alpha), (B, \beta) $ be $G$-\cstar-algebras.
A $*$-homomorphism $\phi: A \to M(D_{\infty,B})$ is called $G$-equivariant if 
$\beta_\infty \circ \phi = (\id \otimes \phi) \circ \alpha$, where both sides are viewed as maps from 
$ A $ into $ M(D_{\infty, C^\red(G) \otimes B}) $. 
If $\phi$ is $G$-equivariant, then we also write $\phi:(A,\alpha) \to (M(D_{\infty,B}),\beta_\infty)$.
\end{defi} 

\begin{rem} \label{prop equiv morph}
Note in particular that if $ \phi: A \to M(D_{\infty,B})$ is $G$-equivariant in the sense of Definition \ref{defequivmdinfty} 
then $\beta_\infty \circ \phi(A) \subset C^\red(G) \otimes M(D_{\infty,B}) \subset M(D_{\infty, C^\red(G) \otimes B})$. 

It is immediate from the definitions that a $*$-homomorphism $\phi: A \to D_{\infty,B}$ is $G$-equivariant as a $*$-homomorphism 
$A \to B_\infty$ if and only if it is $G$-equivariant as a $*$-homomorphism $A \to M(D_{\infty,B})$. 
\end{rem}


\section{Equivariantly sequentially split $*$-homomorphisms}

In this section we discuss the notion of sequentially split $*$-homomorphisms between $G$-\cstar-algebras, which was studied 
in \cite{BarlakSzabo15} in the case of actions by groups. 

\begin{defi}[cf.~{\cite[2.1, 3.3]{BarlakSzabo15}}]
Let $G$ be a quantum group which is either discrete or compact and coexact. Moreover 
let $ (A, \alpha), (B, \beta) $ be $G$-\cstar-algebras. We say that an equivariant $*$-homomorphism $\phi: (A,\alpha)\to (B,\beta)$ is equivariantly sequentially split if there exists a commutative diagram of $G$-equivariant $*$-homomorphisms of the form
\[
\xymatrix{
(A,\alpha) \ar[dr]_\phi \ar[rr] && (A_\infty, \alpha_\infty) \\
& (B,\beta) \ar[ur]_\psi &
}
\]
where the horizontal map is the standard embedding. If $\psi: (B, \beta) \to (A_\infty, \alpha_\infty) $ is an 
equivariant $*$-homomorphism fitting into the above diagram, then we say that $\psi$ is an equivariant approximate left-inverse for $\phi$.
\end{defi}

An important feature of the theory of sequentially split $*$-homomorphisms is that it is compatible with forming crossed product \cstar-algebras. The proof makes use of the following fact.

\begin{lemma} \label{nat morph cross prod}
Let $G$ be a quantum group which is either discrete and exact or compact and coexact. Moreover let $(A, \alpha)$ be 
a $G$-\cstar-algebra. Then there exists a $\check{G}$-equivariant $*$-homomorphism 
$G \ltimes_{\alpha_\infty, \red} A_\infty \to (G \ltimes_{\alpha, \red} A)_\infty$, compatible with the natural inclusions 
of $G \ltimes_{\alpha, \red} A$ on both sides.
\end{lemma}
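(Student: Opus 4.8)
The plan is to treat the discrete and compact cases essentially uniformly, using the descriptions of the crossed products already introduced. Recall that $G \ltimes_{\alpha, \red} A = [(C_0^\red(\check{G}) \otimes \eins)\alpha(A)]$ sits inside $M(\IK(L^2(G)) \otimes A)$, and in the compact case $G \ltimes_{\alpha_\infty, \red} A_\infty$ is by definition the crossed product of the continuous part $A_{\infty, \alpha}$ by the honest coaction $\alpha_\infty \colon A_{\infty, \alpha} \to C^\red(G) \otimes A_{\infty, \alpha}$; in the discrete case $\alpha_\infty \colon A_\infty \to M(C_0(G) \otimes A_\infty)$ is itself an honest coaction. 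In either case the crossed product is the closed span of $(C_0^\red(\check{G}) \otimes \eins)\alpha_\infty(x)$ for $x$ in the relevant algebra, sitting inside $M(\IK(L^2(G)) \otimes A_\infty)$ (or the analogous multiplier algebra in the compact/continuous-part situation).

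First I would construct a canonical $*$-homomorphism $\IK(L^2(G)) \otimes A_\infty \to (\IK(L^2(G)) \otimes A)_\infty$: this is exactly the map of Remark \ref{exactnessrem}, available since $\IK(L^2(G))$ is exact (indeed nuclear). Next, I would check that under this map the generators $(b \otimes \eins)\alpha_\infty(x)$ of $G \ltimes_{\alpha_\infty, \red} A_\infty$, with $b \in C_0^\red(\check{G})$ and $x \in A_{\infty}$ (resp.\ $A_{\infty,\alpha}$), land inside $(G \ltimes_{\alpha, \red} A)_\infty \subset (\IK(L^2(G)) \otimes A)_\infty$. The key point here is compatibility: if $(x_n)_n$ is a representative of $x$, then $\alpha_\infty(x)$ is represented by $(\alpha(x_n))_n$, so $(b \otimes \eins)\alpha_\infty(x)$ is represented by the sequence $((b \otimes \eins)\alpha(x_n))_n$, each term of which lies in $G \ltimes_{\alpha, \red} A$; hence the image is a bounded sequence with entries in the crossed product, i.e.\ an element of $(G \ltimes_{\alpha, \red} A)_\infty$. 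Taking closed linear spans, one gets the desired $*$-homomorphism $G \ltimes_{\alpha_\infty, \red} A_\infty \to (G \ltimes_{\alpha, \red} A)_\infty$. Compatibility with the inclusions of $G \ltimes_{\alpha, \red} A$ on both sides is immediate, since on such constant sequences everything is definitional.

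It then remains to verify $\check{G}$-equivariance. The dual coaction on $G \ltimes_{\alpha, \red} A$ is comultiplication on the $C_0^\red(\check{G})$-leg and trivial on $A$; the same holds for $G \ltimes_{\alpha_\infty, \red} A_\infty$ with $A$ replaced by $A_\infty$ (resp.\ $A_{\infty,\alpha}$). Since the map just constructed is the identity on the $C_0^\red(\check{G})$-leg and is induced componentwise by $\id$ on the $A$-leg, it intertwines the two dual coactions; one checks this on the dense set of generators $(b \otimes \eins)\alpha_\infty(x)$ and extends by continuity, using that both dual coactions are honest coactions and that the map of Remark \ref{exactnessrem} is compatible with the relative multiplier structures as noted there.

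\textbf{Main obstacle.} The technically delicate point is the compact case: one must take care that $\alpha_\infty$ only makes sense as an honest coaction after restricting to the continuous part $A_{\infty,\alpha}$, so the generators must be taken with $x \in A_{\infty,\alpha}$, and one has to know (from Proposition \ref{descr cont part}) that $\alpha_\infty(A_{\infty,\alpha}) \subset C^\red(G) \otimes A_{\infty,\alpha}$ genuinely, together with the identification of the relevant multiplier algebras. Coexactness of $G$ (equivalently exactness of $C^\red(G)$) is what allows the inclusion $C^\red(G) \otimes A_\infty \hookrightarrow (C^\red(G) \otimes A)_\infty$ and hence the whole construction to go through; in the discrete case one uses exactness of $G$ analogously to make sense of things at the level of $M(C_0(G) \otimes A_\infty)$. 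The rest is bookkeeping with leg-numbering and closed spans.
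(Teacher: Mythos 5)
Your treatment of the compact case is essentially the paper's argument: the crossed product $G \ltimes_{\alpha_\infty, \red} A_\infty = G\ltimes_{\alpha_\infty,\red}A_{\infty,\alpha}$ genuinely sits inside $\IK(L^2(G))\otimes A_\infty$ (because $\alpha_\infty(A_{\infty,\alpha})\subset C^\red(G)\otimes A_{\infty,\alpha}$ and $[C^*_\red(G)C^\red(G)]=\IK(L^2(G))$), so one can simply restrict the globally defined embedding $\IK(L^2(G))\otimes A_\infty \into (\IK(L^2(G))\otimes A)_\infty$ and check on generators that the image lands in $(G\ltimes_{\alpha,\red}A)_\infty$. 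That half is fine.

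The discrete case, however, has a genuine gap. There the crossed product $G\ltimes_{\alpha_\infty,\red}A_\infty=[(C^*_\red(G)\otimes\eins)\alpha_\infty(A_\infty)]$ does \emph{not} sit inside $\IK(L^2(G))\otimes A_\infty$ (the elements $b\otimes\eins$ with $b\in C^*_\red(G)$ are only multipliers, and they do not respect the block decomposition $\prod_\lambda\IK(\CH_\lambda)\otimes A_\infty$), so there is no ambient componentwise map to restrict. Your recipe "send the generator $(b\otimes\eins)\alpha_\infty(x)$ to the class of $((b\otimes\eins)\alpha(x_n))_n$ and take closed linear spans" only defines an assignment on pairs $(b,x)$ together with a chosen lift; to get a $*$-homomorphism on the reduced crossed product you must show that a finite sum $\sum_i(b_i\otimes\eins)\alpha_\infty(x_i)$ which vanishes in $G\ltimes_{\alpha_\infty,\red}A_\infty$ has $\limsup_n\|\sum_i(b_i\otimes\eins)\alpha(x_{i,n})\|=0$. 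Equivalently, the composite $G\ltimes_{\alpha^\infty,\red}\ell^\infty(A)\to\ell^\infty(G\ltimes_{\alpha,\red}A)\to(G\ltimes_{\alpha,\red}A)_\infty$ must factor through the surjection $G\ltimes_{\alpha^\infty,\red}\ell^\infty(A)\onto G\ltimes_{\alpha_\infty,\red}A_\infty$, i.e.\ the kernel of that surjection must be exactly $G\ltimes_{c_0(\alpha),\red}c_0(A)$ and not something larger. This is precisely where exactness of $G$ (exactness of the reduced crossed product functor, applied to $0\to c_0(A)\to\ell^\infty(A)\to A_\infty\to 0$) is used in the paper; it is the standard failure mode of reduced crossed products with respect to quotients and cannot be waved away by density of the generators. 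Note also that you attribute the role of exactness in the discrete case to "making sense of things at the level of $M(C_0(G)\otimes A_\infty)$" --- but the induced coaction on $A_\infty$ exists for every discrete quantum group without any exactness hypothesis; exactness is needed only for the factorization just described.
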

\begin{proof}
Assume first that $G$ is discrete and exact. Since taking reduced crossed products with $ G $ is exact, the 
canonical map $ G \ltimes_{\alpha^\infty, \red} \ell^\infty(A) \rightarrow \ell^\infty(G \ltimes_{\alpha,\red} A) $ induces a 
commutative diagram with exact rows
\[
\xymatrix{
0 \ar[r] & G \ltimes_{c_0(\alpha), \red} c_0(A) \ar[r] \ar[d]^\cong & G \ltimes_{\alpha^\infty, \red} \ell^\infty(A) \ar[r] \ar[d] 
& G \ltimes_{\alpha_\infty, \red} A_\infty \ar[r] \ar[d] & 0 \\
0 \ar[r] & c_0(G \ltimes_{\alpha, \red} A) \ar[r] & \ell^\infty(G \ltimes_{\alpha, \red} A) \ar[r] 
& (G \ltimes_{\alpha, \red} A)_\infty \ar[r] & 0. 
}
\]
Here $ c_0(\alpha): c_0(A) \rightarrow M_{C_0(G)}(C_0(G) \otimes c_0(A)) $ denotes the restriction 
of $ \alpha^\infty $ to $ c_0(A) $. 

It is clear from the construction that the $*$-homomorphism 
$G \ltimes_{\alpha_\infty, \red} A_\infty \to (G \ltimes_{\alpha, \red} A)_\infty$ is $\check{G}$-equivariant and compatible with the canonical inclusions of $G \ltimes_{\alpha, \red} A$.

Assume now that $G$ is compact and coexact. Let us abbreviate $ \IK = \IK(L^2(G))$. 
Then the canonical map $ \IK \otimes \ell^\infty(A) \rightarrow \ell^\infty( \IK \otimes A) $ induces the following commutative diagram 
with exact rows
\[
\xymatrix{
0 \ar[r] & \IK \otimes c_0(A) \ar[r] \ar[d]^\cong & \IK \otimes \ell^\infty(A) \ar@{^(->}[d] \ar[r] & \IK \otimes A_\infty \ar[r] \ar@{^(->}[d] \ar[r] & 0\\
0 \ar[r] & c_0(\IK \otimes A) \ar[r] & \ell^\infty( \IK \otimes A) \ar[r] & (\IK \otimes A)_\infty \ar[r] & 0
}
\]
The middle vertical arrow restricts to a $*$-homomorphism $ G \ltimes_{\alpha^\infty, \red} \ell^\infty(A) 
\into \ell^\infty(G \ltimes_{\alpha, \red} A)$. By Lemma \ref{spectralsurjective}, the canonical $\check{G}$-equivariant 
map $G \rtimes_{\alpha^\infty, \red} \ell^\infty(A) \to G \ltimes_{\alpha_\infty, \red} A_\infty $ is surjective. 
Observe moreover that the canonical surjection $\ell^{\infty}(\IK \otimes A) \to (\IK \otimes A)_\infty$ restricts to the canonical 
surjection $\ell^{\infty}(G \ltimes_\red A) \to (G \ltimes_\red A)_\infty$. It follows that the 
embedding $\IK \otimes A_\infty \into (\IK \otimes A)_\infty$ restricts to an embedding 
$G \ltimes_{\alpha_\infty, \red} A_\infty \into (G \ltimes_{\alpha, \red} A)_\infty$. This map is clearly compatible with the 
canonical embeddings of $G \ltimes_{\alpha, \red} A$. Moreover, 
as $G \ltimes_{\alpha^\infty, \red} \ell^\infty(A) \to (G \ltimes_{\alpha, \red} A)_\infty$ is $\check{G}$-equivariant, this also 
holds for $G \ltimes_{\alpha_\infty, \red} A_\infty \to (G \ltimes_{\alpha, \red} A)_\infty$. This finishes the proof.
\end{proof}

\begin{prop} \label{form-crossed-products}
Let $G$ be quantum group which is either discrete and exact or compact and coexact. Moreover let $(A, \alpha)$ and $(B, \beta)$ 
be $G$-\cstar-algebras. Assume that $\phi:(A,\alpha)\to (B,\beta)$ is an equivariantly sequentially split $*$-homomorphism.

Then the induced $*$-homomorphism $ G \ltimes_\red \phi: G\ltimes_{\alpha, \red} A \to G \ltimes_{\beta, \red} B$ between the crossed products is $\check{G}$-equivariantly sequentially split.
\end{prop}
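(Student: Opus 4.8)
The plan is to transport the approximate left-inverse through the reduced crossed product functor and then splice on the natural map provided by Lemma \ref{nat morph cross prod}. Let $\psi\colon (B,\beta)\to (A_\infty,\alpha_\infty)$ be an equivariant approximate left-inverse for $\phi$, so that $\psi$ is $G$-equivariant and $\psi\circ\phi$ equals the standard embedding $\iota_A\colon A\to A_\infty$. By the characterisation of equivariance for $*$-homomorphisms into sequence algebras (the proposition stated right after Proposition \ref{descr cont part}, condition (c) there), $\psi$ automatically takes values in the continuous part $A_{\infty,\alpha}$ and is $G$-equivariant in the usual sense as a map $(B,\beta)\to (A_{\infty,\alpha},\alpha_\infty)$. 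Since $A_{\infty,\alpha}$ is a genuine $G$-\cstar-algebra by Proposition \ref{descr cont part}, we may apply the reduced crossed product functor. The reduced crossed product is covariantly functorial for equivariant $*$-homomorphisms, and the induced maps are automatically equivariant for the dual $\check{G}$-coactions; this produces a $\check{G}$-equivariant $*$-homomorphism
\[
G\ltimes_\red \psi\colon\ G\ltimes_{\beta,\red} B\ \longrightarrow\ G\ltimes_{\alpha_\infty,\red} A_{\infty,\alpha}\ =\ G\ltimes_{\alpha_\infty,\red} A_\infty,
\]
where the last identification is Definition \ref{deficrossedpr}.

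Composing with the $\check{G}$-equivariant $*$-homomorphism $G\ltimes_{\alpha_\infty,\red} A_\infty\to (G\ltimes_{\alpha,\red} A)_\infty$ from Lemma \ref{nat morph cross prod} yields a $\check{G}$-equivariant $*$-homomorphism
\[
\Psi\colon\ G\ltimes_{\beta,\red} B\ \longrightarrow\ (G\ltimes_{\alpha,\red} A)_\infty.
\]
It remains to check that $\Psi$ is an approximate left-inverse for $G\ltimes_\red\phi$, i.e.\ that $\Psi\circ(G\ltimes_\red\phi)$ is the standard embedding of $G\ltimes_{\alpha,\red} A$ into its sequence algebra. By functoriality of the reduced crossed product together with $\psi\circ\phi=\iota_A$, we have $(G\ltimes_\red\psi)\circ(G\ltimes_\red\phi)=G\ltimes_\red\iota_A$, which is precisely the natural inclusion $G\ltimes_{\alpha,\red} A\hookrightarrow G\ltimes_{\alpha_\infty,\red} A_\infty$. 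Postcomposing with the map of Lemma \ref{nat morph cross prod} and invoking its stated compatibility with the natural inclusions of $G\ltimes_{\alpha,\red} A$ on both sides, we conclude that $\Psi\circ(G\ltimes_\red\phi)$ equals the standard embedding, as required. Hence $G\ltimes_\red\phi$ is $\check{G}$-equivariantly sequentially split.

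I expect the only point demanding genuine care to be the very first step: setting up functoriality of the reduced crossed product for an equivariant $*$-homomorphism whose target is the (possibly nonunital, nondegenerately embedded) continuous part $A_{\infty,\alpha}$, and confirming that the resulting map intertwines the dual $\check{G}$-coactions. Everything after that is a formal diagram chase once Lemma \ref{nat morph cross prod} and Definition \ref{deficrossedpr} are available. Note also that both alternatives of the hypothesis (discrete exact, or compact coexact) are handled uniformly, since Lemma \ref{nat morph cross prod} already covers both cases.
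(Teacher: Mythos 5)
Your proof is correct and follows essentially the same route as the paper: apply the reduced crossed product functor to the approximate left-inverse $\psi$ and then compose with the natural $\check{G}$-equivariant map $G\ltimes_{\alpha_\infty,\red}A_\infty\to (G\ltimes_{\alpha,\red}A)_\infty$ of Lemma \ref{nat morph cross prod}, using its compatibility with the canonical inclusions. Your explicit remark that $\psi$ lands in the continuous part $A_{\infty,\alpha}$ (so that the crossed product of Definition \ref{deficrossedpr} applies in the compact case) is a point the paper leaves implicit, but the argument is the same.
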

\begin{proof}
Let $\psi:(B,\beta)\to (A_{\infty},\alpha_\infty)$ be an approximate left-inverse for $\phi$. Passing to crossed products, we obtain a commutative diagram of $\check{G}$-equivariant $*$-homomorphisms
\[
\xymatrix{
(G \ltimes_{\alpha, \red} A,\check{\alpha}) \ar[rr] \ar[rd]_{G\ltimes_\red \phi} & & (G \ltimes_{\alpha_\infty, \red} A_\infty, \check{\gamma}) \\
 & (G \ltimes_{\beta, \red} B,\check{\beta}) \ar[ru]_{G \ltimes_\red \psi}
}
\]
where $ \gamma = \alpha_\infty $.  
Composing $G \ltimes_\red \psi$ with the $\check{G}$-equivariant $*$-homomorphism 
$G \ltimes_{\alpha_\infty, \red} A_\infty \to (G \ltimes_{\alpha, \red} A)_\infty$ from Lemma \ref{nat morph cross prod} yields an equivariant approximate left-inverse for $G\ltimes_\red \phi$.
\end{proof}

Let us next recall the definition of the fixed point algebra of an action of a compact quantum group. 

\begin{defi}
Let $G$ be a compact quantum group. For a $G$-\cstar-algebra $ (A,\alpha) $ we denote by  
\[
A^\alpha = \{a \in A \mid \alpha(a) = \eins \otimes a\} \subset A 
\]
the \cstar-subalgebra of fixed points. 
\end{defi}

\begin{lemma} \label{fixed-point-alg}
Let $G$ be a coexact compact quantum group and let $ (A,\alpha) $ be a $ G $-\cstar-algebra. 
Then the canonical inclusion $ (A^\alpha)_\infty \rightarrow (A_{\infty, \alpha})^{\alpha_\infty} $ is an isomorphism.
\end{lemma}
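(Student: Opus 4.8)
The natural inclusion sends a class $[(a_n)_n]$ with each $a_n \in A^\alpha$ to its image in $A_\infty$, which clearly lands in $(A_{\infty,\alpha})^{\alpha_\infty}$ since fixed elements are automatically in the spectral subspace $A_\lambda$ for the trivial representation, hence in the continuous part, and are fixed by $\alpha_\infty$. So the map is well-defined; I must show it is injective and surjective. Injectivity: if $(a_n)_n \in \ell^\infty(A^\alpha)$ represents $0$ in $A_\infty$, then $\|a_n\| \to 0$, so it represents $0$ in $(A^\alpha)_\infty$ — this is immediate. Thus the crux is surjectivity.

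\textbf{Surjectivity.} Take $x \in (A_{\infty,\alpha})^{\alpha_\infty}$, i.e. $x \in A_\infty$ with $\alpha_\infty(x) = \eins \otimes x$. By Lemma~\ref{spectralsurjective} (applied with the trivial representation, or just using that $x$ lies in a finite-dimensional spectral piece, which here is the trivial one), lift $x$ to $\tilde x = (\tilde x_n)_n \in \ell^\infty(A)$. The condition $\alpha_\infty(x) = \eins \otimes x$ means exactly that $\alpha^\infty(\tilde x) - (\eins \otimes \tilde x) \in c_0(C^\red(G) \otimes A)$, i.e. $\|\alpha(\tilde x_n) - \eins \otimes \tilde x_n\| \to 0$. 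The plan is then to correct each $\tilde x_n$ by its ``average over $G$'', i.e. apply the conditional expectation onto the fixed-point algebra. Concretely, set $E\colon A \to A^\alpha$, $E(a) = (\phi \otimes \id)\alpha(a)$, where $\phi$ is the Haar state; this is a contractive completely positive projection onto $A^\alpha$. Put $a_n = E(\tilde x_n) \in A^\alpha$. I claim $\|a_n - \tilde x_n\| \to 0$, which forces $[(a_n)_n] = x$ in $A_\infty$ and exhibits $x$ in the image.

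\textbf{The estimate.} To see $\|E(\tilde x_n) - \tilde x_n\| \to 0$: write $E(\tilde x_n) - \tilde x_n = (\phi \otimes \id)\big(\alpha(\tilde x_n)\big) - (\phi \otimes \id)(\eins \otimes \tilde x_n)$ using $\phi(\eins) = 1$. Hence $\|E(\tilde x_n) - \tilde x_n\| \le \|\phi\otimes\id\| \cdot \|\alpha(\tilde x_n) - \eins\otimes\tilde x_n\| = \|\alpha(\tilde x_n) - \eins\otimes\tilde x_n\| \to 0$. (Here $\phi \otimes \id\colon C^\red(G)\otimes A \to A$ is the standard slice map, contractive.) This is the main point, and it is genuinely short; the only thing to be careful about is that the slice map $\phi \otimes \id$ is well-defined and contractive on the minimal tensor product, which is standard, and that $E$ indeed maps into $A^\alpha$, which follows from coassociativity and left invariance of $\phi$: $\alpha(E(a)) = \alpha((\phi\otimes\id)\alpha(a)) = (\phi \otimes \id \otimes \id)(\Delta \otimes \id)\alpha(a) = \eins \otimes (\phi\otimes\id)\alpha(a) = \eins \otimes E(a)$.

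\textbf{Main obstacle.} I do not expect a serious obstacle; the content is entirely in recognizing that $\alpha_\infty(x) = \eins\otimes x$ translates into a norm-asymptotic fixed-point condition on any lift, after which the Haar-state conditional expectation does the correction. The one point requiring a line of care is the passage between the identity ``$\alpha_\infty(x) = \eins \otimes x$ in $(C^\red(G)\otimes A)_\infty$'' and the sequence-level statement ``$\|\alpha(\tilde x_n) - \eins\otimes\tilde x_n\| \to 0$'': this uses the canonical inclusion $C^\red(G) \otimes A_\infty \hookrightarrow (C^\red(G)\otimes A)_\infty$ from Remark~\ref{exactnessrem} (valid since $G$ is coexact, so $C^\red(G)$ is exact), together with the fact that the constant-sequence element $\eins \otimes x$ corresponds to $(\eins \otimes \tilde x_n)_n$; once that identification is in place the claim is just the definition of the quotient norm on $(C^\red(G)\otimes A)_\infty$. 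Finally one checks the composite $(A^\alpha)_\infty \to (A_{\infty,\alpha})^{\alpha_\infty} \to (A^\alpha)_\infty$ (lift, average) is the identity, which is clear since $E$ restricts to the identity on $A^\alpha$.
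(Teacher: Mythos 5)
Your proof is correct and follows essentially the same route as the paper: lift a fixed element of $(A_{\infty,\alpha})^{\alpha_\infty}$ to a sequence $(\tilde x_n)_n$ with $\|\alpha(\tilde x_n)-\eins\otimes\tilde x_n\|\to 0$ and correct it by the Haar-state conditional expectation $(\phi\otimes\id)\circ\alpha$, which maps $A$ into $A^\alpha$ and moves each $\tilde x_n$ by at most $\|\alpha(\tilde x_n)-\eins\otimes\tilde x_n\|$. The only difference is that you spell out the (immediate) injectivity and the identification of $\eins\otimes x$ with the sequence $(\eins\otimes\tilde x_n)_n$, which the paper leaves implicit.
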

\begin{proof}
Let $ a \in (A_{\infty, \alpha})^{\alpha_\infty} $ be represented by $ (a_n)_{n \in \mathbb{N}} \in \ell^\infty(A) $. 
Then 
\[
\lim_{n \rightarrow \infty} \|\alpha(a_n) - \eins \otimes a_n \| = 0 
\]
by the fixed point condition. Applying the Haar state $ \phi: C^\red(G) \to \mathbb{C} $ in the first tensor factor 
gives 
\[
\lim_{n \rightarrow \infty} \|(\phi \otimes \id)\circ\alpha(a_n) - a_n \| = 0 
\]
Since $ (\phi \otimes \id) \circ \alpha $ maps $ A $ into $ A^\alpha $ we conclude that $ a $ is represented by an 
element of $ \ell^\infty(A^\alpha) $. 
\end{proof} 

As we show next, a naturality property as in Proposition \ref{form-crossed-products} also holds for fixed point algebras of actions of 
compact quantum groups. 

\begin{prop} \label{form-fixed-point-algs}
Let $G$ be a coexact compact quantum group, and let $ (A,\alpha), (B, \beta) $ be $ G $-\cstar-algebras. 
Assume that $\phi:(A,\alpha)\to (B,\beta)$ is a $G$-equivariantly sequentially split $*$-homomorphism. 

Then the induced $*$-homomorphism $\phi:A^\alpha\to B^\beta$ is a sequentially split.
\end{prop}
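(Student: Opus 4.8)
The plan is to imitate the proof of Proposition \ref{form-crossed-products}, but instead of passing to crossed products, I would restrict the whole picture to fixed point algebras. So, let $\psi\colon(B,\beta)\to(A_\infty,\alpha_\infty)$ be an equivariant approximate left-inverse for $\phi$, meaning the composite $A\xrightarrow{\phi}B\xrightarrow{\psi}A_\infty$ is the standard embedding. First I would note that $\phi$ restricts to a $*$-homomorphism $A^\alpha\to B^\beta$: since $\beta\circ\phi=(\id\otimes\phi)\circ\alpha$, the condition $\alpha(a)=\eins\otimes a$ forces $\beta(\phi(a))=\eins\otimes\phi(a)$. Likewise the standard embedding $A\hookrightarrow A_\infty$ restricts to the standard embedding $A^\alpha\hookrightarrow(A^\alpha)_\infty$. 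Thus the only thing left is to produce an approximate left-inverse for $\phi\colon A^\alpha\to B^\beta$, and the natural candidate is a restriction of $\psi$.

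The key step is to show that $\psi$ maps $B^\beta$ into $(A^\alpha)_\infty\subset A_\infty$. Fix $b\in B^\beta$. By Definition \ref{equiv sequence alg}, equivariance of $\psi$ means $\alpha_\infty\circ\psi=(\id\otimes\psi)\circ\beta$ as maps $B\to(C^\red(G)\otimes A)_\infty$. Evaluating at $b$ and using $\beta(b)=\eins\otimes b$ yields $\alpha_\infty(\psi(b))=\eins\otimes\psi(b)$, where the right-hand side lies in $C^\red(G)\otimes A_\infty\subset(C^\red(G)\otimes A)_\infty$; here the inclusion uses coexactness of $G$, i.e.\ exactness of $C^\red(G)$, as in Remark \ref{exactnessrem}. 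Since $\eins\in\CO(G)$, the element $\eins\otimes\psi(b)$ lies in $\CO(G)\odot A_\infty$, so Proposition \ref{descr cont part} gives $\psi(b)\in\CS(A_\infty)\subset A_{\infty,\alpha}$. The displayed identity then says exactly that $\psi(b)\in(A_{\infty,\alpha})^{\alpha_\infty}$, and by Lemma \ref{fixed-point-alg} the canonical inclusion identifies $(A^\alpha)_\infty$ with $(A_{\infty,\alpha})^{\alpha_\infty}$. Hence $\psi$ restricts to a $*$-homomorphism $\psi^\alpha\colon B^\beta\to(A^\alpha)_\infty$.

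It then remains to observe that $\psi^\alpha$ is an approximate left-inverse for $\phi\colon A^\alpha\to B^\beta$, which is immediate by restriction: for $a\in A^\alpha$ we have $\psi^\alpha(\phi(a))=\psi(\phi(a))$, which is the image of $a$ under the standard embedding $A\hookrightarrow A_\infty$; since this image lies in $(A^\alpha)_\infty$ by the previous paragraph, it coincides with the image of $a$ under the standard embedding $A^\alpha\hookrightarrow(A^\alpha)_\infty$. So the triangle with vertices $A^\alpha$, $B^\beta$, $(A^\alpha)_\infty$ commutes and $\phi\colon A^\alpha\to B^\beta$ is sequentially split.

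I do not expect a serious obstacle here: essentially all the substantive work has been front-loaded into Proposition \ref{descr cont part} and Lemma \ref{fixed-point-alg}. The one point requiring a little care is the correct interpretation of the "fixed point" equation $\alpha_\infty(\psi(b))=\eins\otimes\psi(b)$ inside $(C^\red(G)\otimes A)_\infty$ — where $\alpha_\infty$ is only a $*$-homomorphism, not an honest coaction — and its transport to the genuine coaction on the continuous part $A_{\infty,\alpha}$; but this is precisely what Proposition \ref{descr cont part} supplies.
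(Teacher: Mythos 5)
Your proposal is correct and follows essentially the same route as the paper's proof: equivariance of the approximate left-inverse $\psi$ gives $\alpha_\infty(\psi(b))=\eins\otimes\psi(b)$ for $b\in B^\beta$, and Lemma \ref{fixed-point-alg} identifies $(A_{\infty,\alpha})^{\alpha_\infty}$ with $(A^\alpha)_\infty$. Your extra step invoking Proposition \ref{descr cont part} to check that $\psi(b)$ lands in the continuous part $A_{\infty,\alpha}$ merely makes explicit a point the paper leaves implicit.
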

\begin{proof}
Let $\psi:(B,\beta)\to (A_\infty, \alpha_\infty) $ be an equivariant approximate left-inverse for $\phi$. 
By equivariance of $\psi$, we have 
\[
\alpha_\infty \circ \psi(b) = (\id \otimes \psi)\circ \beta(b) = \eins \otimes \psi(b)
\]
for all $ b \in B^\beta $. Hence $ \psi $ maps $ B^\beta $ into $ (A_{\infty, \alpha})^{\alpha_\infty} $. According 
to Lemma \ref{fixed-point-alg} the latter identifies with $ (A^\alpha)_\infty $, and therefore 
$ \psi: B^\beta \rightarrow (A^\alpha)_\infty $ is an approximate left-inverse for $ \phi: A^\alpha \to B^\beta $. 
\end{proof}

The following stability result is an important feature for the theory of sequentially split $*$-homomorphisms.

\begin{prop} \label{proptensorwithcompacts}
Let $ G $ be a quantum group which is either compact and coexact or discrete and exact. Moreover 
let $ \phi: (A, \alpha) \to (B, \beta) $ be a nondegenerate $ G $-equivariant $ * $-homomorphism 
between $ G $-\cstar-algebras. 

Then $ \phi $ is $ G $-equivariantly sequentially split 
if and only if $ \id \otimes \phi: (\IK(L^2(G)) \otimes A, \alpha_\IK) \rightarrow (\IK(L^2(G)) \otimes B, \beta_\IK) $ 
is $ G $-equivariantly sequentially split. 
\end{prop}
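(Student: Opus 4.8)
The plan is to prove both implications using the stability of the equivariantly sequentially split property under composition, together with the fact that tensoring by $\IK = \IK(L^2(G))$ is an equivariant operation whose effect can be reversed up to a corner embedding. For the forward direction, suppose $\phi\colon(A,\alpha)\to(B,\beta)$ is $G$-equivariantly sequentially split with equivariant approximate left-inverse $\psi\colon(B,\beta)\to(A_\infty,\alpha_\infty)$. Applying $\id\otimes(-)$ to the defining diagram and using Lemma \ref{lemmaDomegastable} (in the discrete case) or Remark \ref{exactnessrem} together with Lemma \ref{lemmaDomegastable} and the compatibility of $\alpha_\infty$ with $\IK\otimes(-)$ (in the compact coexact case) should produce an equivariant map $\id\otimes\psi\colon(\IK\otimes B,\beta_\IK)\to((\IK\otimes A)_\infty,(\alpha_\IK)_\infty)$ splitting $\id\otimes\phi$. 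The one subtlety is that $\alpha_\IK$ is the conjugation-type action from Remark \ref{conjug action compacts}, not the diagonal tensor action, so I need to check that the canonical embedding $\IK\otimes A_\infty\hookrightarrow(\IK\otimes A)_\infty$ (resp.\ $\IK\otimes D_{\infty,A}\cong D_{\infty,\IK\otimes A}$) intertwines $\id\otimes\alpha_\infty$ with $(\alpha_\IK)_\infty$. This is where the leg-numbering formula $\alpha_\IK(T\otimes a)=X_{12}^*(\eins\otimes T\otimes\eins)\alpha(a)_{13}X_{12}$ is used: since $X$ lives in $M(\IK\otimes\IK)$ acting only on the $L^2(G)$-legs, conjugation by it commutes with the sequence-algebra operations, so the intertwining is automatic.

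For the reverse direction, the key observation is that $\phi$ is recovered from $\id\otimes\phi$ by tensoring once more with $\IK$ and then passing to a corner: one has $(A,\alpha)\cong (p(\IK\otimes A)p, \ldots)$ for a suitable rank-one projection $p$, but since $\alpha_\IK$ is not diagonal, $p$ must be chosen equivariantly. The clean way to avoid choosing such a $p$ is to use Takesaki–Takai duality (Theorem \ref{TTduality}): $\IK(L^2(G))\otimes A\cong\check G\ltimes_{\check\alpha,\red}G\ltimes_{\alpha,\red}A$ as $G$-\cstar-algebras, so $\id\otimes\phi$ is conjugate to the double crossed product map $\check G\ltimes G\ltimes\phi$. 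By Proposition \ref{form-crossed-products}, applied twice, if $\check G\ltimes G\ltimes\phi$ is equivariantly sequentially split then so is $\phi$ — provided we know that $\phi$ being equivariantly sequentially split is detected by its double dual crossed product. Actually the cleaner route is: Proposition \ref{form-crossed-products} gives the implication ``$\phi$ e.s.s.\ $\Rightarrow$ $G\ltimes_\red\phi$ e.s.s.'', and combined with its analogue for the second (dual) crossed product, we get ``$\phi$ e.s.s.\ $\Rightarrow$ $\check G\ltimes G\ltimes\phi$ e.s.s.''; transporting through the Takesaki–Takai isomorphism, this is precisely ``$\phi$ e.s.s.\ $\Rightarrow$ $\id\otimes\phi$ e.s.s.'', which is the forward direction again. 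So the forward direction can alternatively be obtained from Proposition \ref{form-crossed-products}, and the content of the proposition is really the \emph{converse}.

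So I would organize the proof around the converse direction via a right-inverse to tensoring with $\IK$. The plan: starting from an equivariant approximate left-inverse $\Psi\colon(\IK\otimes B,\beta_\IK)\to((\IK\otimes A)_\infty,(\alpha_\IK)_\infty)$ for $\id\otimes\phi$, I want to produce one for $\phi$ itself. Using nondegeneracy of $\phi$ and the hypothesis, I can fix a state $\omega$ on $\IK$ and a minimal projection, but to stay equivariant I instead invoke that $A\cong \eins_{M(\IK)}\cdot(\IK\otimes A)\cdot$ via the nondegenerate embedding and observe that the conditional expectation/slice $\IK\otimes A\to A$ coming from a fixed vector state on $L^2(G)$ is \emph{not} equivariant, which is exactly why the naive argument fails. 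The fix is to compose with one more copy of $\id\otimes(-)$ and Takesaki–Takai: if $\id\otimes\phi$ is e.s.s., then $\id\otimes\id\otimes\phi$ is e.s.s.\ by the already-proven forward direction, i.e.\ $\IK\otimes\IK\otimes\phi$ is e.s.s.; but $\IK\otimes\IK\otimes A\cong\check G\ltimes G\ltimes(\IK\otimes A)$ equivariantly, and under a further duality, $\phi$ sits as a crossed-product corner. The \textbf{main obstacle} is making this ``undoing'' rigorous while staying inside the category of $G$-\cstar-algebras: the conjugation action $\alpha_\IK$ obstructs a direct corner argument, and one must route everything through Takesaki–Takai duality (Theorem \ref{TTduality}) plus Proposition \ref{form-crossed-products} applied to $\check G$ as well — which requires $\check G$ (a discrete quantum group) to be exact, matching the hypothesis ``$G$ coexact.'' Once the reduction to iterated crossed products is in place, the two implications follow symmetrically from Proposition \ref{form-crossed-products}, and the bookkeeping of which action ($\alpha_\IK$ vs.\ the bidual $\check{\check\alpha}$) appears is handled by the explicit formulas recorded after Theorem \ref{TTduality}.
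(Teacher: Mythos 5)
Your forward direction is fine and matches the paper: apply $\id\otimes(-)$ to the splitting diagram and use $\IK\otimes A_\infty\subset(\IK\otimes A)_\infty$, with the observation that conjugation by $X_{12}$ is compatible with passing to sequence algebras. The problem is the converse, which you yourself correctly identify as the real content of the proposition — and which your proposal never actually proves. Every route you sketch for it is either circular or left as an unfilled obstacle. Concretely: Proposition \ref{form-crossed-products} is a one-way implication ($\phi$ sequentially split $\Rightarrow$ $G\ltimes_\red\phi$ sequentially split); it does \emph{not} let you descend from $\check G\ltimes G\ltimes\phi$ back to $\phi$. The statement that crossed products \emph{detect} the sequentially split property is Proposition \ref{general-duality}, and in the paper that proposition is proved \emph{using} Proposition \ref{proptensorwithcompacts} (its proof reduces $\check{\check\phi}$ to $\id_\IK\otimes\phi$ via Takesaki--Takai and then invokes the very statement you are trying to prove). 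So routing the converse through duality and iterated crossed products, as you propose, begs the question. Your fallback — tensor with one more copy of $\IK$ and hope that $\phi$ ``sits as a crossed-product corner'' — never supplies the mechanism for undoing the stabilization, which is exactly the difficulty you flagged at the outset.

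The missing idea is a direct factorization argument, not a duality argument. Given an equivariant approximate left-inverse $\Psi\colon(\IK\otimes B,\beta_\IK)\to((\IK\otimes A)_\infty,(\alpha_\IK)_\infty)$ for $\id\otimes\phi$, nondegeneracy of $\phi$ forces $\im(\Psi)\subset D_{\infty,\IK\otimes A}\cong\IK\otimes D_{\infty,A}$ (Lemma \ref{lemmaDomegastable}). The restriction $\Psi_B$ of $\Psi$ to $\eins\otimes B$ lands in the relative commutant of $\IK\otimes\eins$ inside $M(\IK\otimes D_{\infty,A})$, which by \cite[1.8]{BarlakSzabo15} equals $\eins\otimes M(D_{\infty,A})$; nondegeneracy then pins $\im(\Psi_B)$ inside $\eins\otimes D_{\infty,A}$, so $\Psi=\id_\IK\otimes\psi$ for a $*$-homomorphism $\psi\colon B\to D_{\infty,A}$, which is an approximate left-inverse for $\phi$. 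Equivariance of $\psi$ is then extracted by comparing $(\id\otimes\Psi)\circ\beta_\IK(T\otimes b)$ with $(\alpha_\IK)_\infty(\Psi(T\otimes b))$ using the explicit formula $\alpha_\IK(T\otimes a)=X_{12}^*(\eins\otimes T\otimes\eins)\alpha(a)_{13}X_{12}$ and cancelling the $X_{12}$-conjugation. Without this factorization step your proof does not close.
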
 
\begin{proof}
Let us first consider the case that $ G $ is compact and coexact. 

Assume that $ \phi $ is $ G $-equivariantly sequentially split, and let $ \psi: (B, \beta) \rightarrow (A_\infty, \alpha_\infty) $ 
be a $ G $-equivariant approximate left inverse of $ \phi $. 
Then the map $ \id \otimes \psi: \IK(L^2(G)) \otimes B \rightarrow \IK(L^2(G)) \otimes A_\infty \subset (\IK(L^2(G)) \otimes A)_\infty $ 
is $ G $-equivariant, and yields an equivariant approximate left-inverse for $ \id \otimes \phi $.

Conversely, assume that $ \id \otimes \phi: (\IK(L^2(G)) \otimes A, \alpha_\IK) \rightarrow (\IK(L^2(G)) \otimes B, \beta_\IK) $ 
is $ G $-equivariantly sequentially split. Let $\Psi: (\IK(L^2(G)) \otimes B, \beta_\IK) 
\to ((\IK(L^2(G)) \otimes A)_\infty, (\alpha_\IK)_\infty)$ be a $G$-equivariant approximate left-inverse. As $\phi$ is 
assumed to be non-degenerate, the image of $ \Psi $ is contained 
in $D_{\infty,\IK(L^2(G)) \otimes A}$. Using the isomorphism $D_{\infty,\IK(L^2(G)) \otimes A} \cong \IK(L^2(G)) \otimes D_{\infty,A}$ from Lemma \ref{lemmaDomegastable}, we see that $\Psi$ defines a nondegenerate $ * $-homomorphism 
from $\IK(L^2(G)) \otimes B$ into $\IK(L^2(G)) \otimes D_{\infty,A}$. Let us denote the extension 
$ M(\IK(L^2(G)) \otimes B) \to M(\IK(L^2(G)) \otimes D_{\infty,A})$ to multiplier algebras again by $ \Psi $.
 
We shall write $ \Psi_B $ for the restriction of $ \Psi $ to $ B \cong \eins \otimes B \subset M(\IK(L^2(G)) \otimes B) $. 
Then $ \Psi_B: B \rightarrow M(D_{\infty,\IK(L^2(G) \otimes A}) $ is a $ * $-homomorphism whose image is 
contained in the relative commutant of $ \IK(L^2(G)) \otimes \eins $. According to \cite[1.8]{BarlakSzabo15}, 
its image $ \im(\Psi_B) $ is therefore contained in $ \eins \otimes M(D_{\infty,A}) $. Using again nondegeneracy of $ \phi $ we 
see that $ \im(\Psi_B) $ is in fact contained in $ \eins \otimes D_{\infty,A} $. 
From these observations and the sequential split property we conclude that $ \Psi $ can be written in the 
form $ \Psi = \id_{\IK(L^2(G))} \otimes \psi $ for a non-degenerate $ * $-homomorphism $ \psi: B \rightarrow D_{\infty,A} $. It is easy to check that $ \psi $ is an approximate left-inverse for $ \phi $.

We claim that $ \psi: B \rightarrow D_{\infty,A} $ is $ G $-equivariant. 
For this consider a simple tensor $ T \otimes b \in \IK(L^2(G)) \otimes B $ and compute 
\[
\begin{array}{lcl}
(\id \otimes \Psi)\beta_\IK(T \otimes b) & =& (\id \otimes \Psi)(X^*_{12} (\eins \otimes T \otimes \eins) \beta(b)_{13} X_{12}) \\ 
& = & (\id \otimes \id \otimes \psi)(X^*_{12} (\eins \otimes T \otimes \eins) \beta(b)_{13} X_{12}) \\ 
& = & X^*_{12} (\eins \otimes T \otimes \eins) ((\id \otimes \psi)\beta(b))_{13} X_{12} 
\end{array}
\]
and 
\[
(\alpha_\IK)_\infty(\Psi(T \otimes b)) = (\alpha_\IK)_\infty(T \otimes \psi(b)) 
= X^*_{12} (\eins \otimes T \otimes \eins) \alpha_\infty(\psi(b))_{13} X_{12}. 
\]
Here all expressions are viewed as elements of $ (C^\red(G) \otimes \IK(L^2(G)) \otimes A)_\infty $. Equivariance of $ \Psi $ 
means that the above expressions are equal. We conclude $ (\id \otimes \psi)\beta(b) = \alpha_\infty(\psi(b)) $ for all $b \in B$ 
as desired. 

In the case that $ G $ is discrete and exact we can follow the above arguments almost word by word, in this case the situation 
is even slightly easier since all algebras involved are honest $ G$-\cstar-algebras. 
\end{proof}

\begin{prop} \label{general-duality}
Let $ G $ be a quantum group which is either compact and coexact or discrete and exact. 
Moreover assume that $(A, \alpha), (B,\beta)$ are
separable $G$-\cstar-algebras, and let $\phi: (A,\alpha)\to (B,\beta)$ be a non-degenerate equivariant $*$-homomorphism. 

Then $\phi$ is $G$-equivariantly sequentially split if and only if
\[
\check{\phi} = G \ltimes_\red \phi: (G \ltimes_{\alpha, \red} A, \check{\alpha}) \to (G \ltimes_{\beta, \red} B, \check{\beta})
\]
is $\check{G}$-equivariantly sequentially split.
\end{prop}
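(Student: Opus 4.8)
The plan is to obtain both implications by chaining the naturality results already proved with Takesaki--Takai duality. Note first that the hypotheses are self-dual: if $G$ is compact and coexact then $\check G$ is discrete and exact, and if $G$ is discrete and exact then $\check G$ is compact and coexact, since in either case the relevant exactness is exactness of the \cstar-algebra $C^*_\red(G) = C_0^\red(\check G)$. Hence all statements below may be applied to $\check G$ as well.

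The forward implication is immediate from Proposition \ref{form-crossed-products}: if $\phi$ is $G$-equivariantly sequentially split, then $\check\phi = G\ltimes_\red\phi$ is $\check G$-equivariantly sequentially split. For the converse, assume $\check\phi$ is $\check G$-equivariantly sequentially split and apply Proposition \ref{form-crossed-products} a second time, now to the quantum group $\check G$ and the equivariant $*$-homomorphism $\check\phi\colon (G\ltimes_{\alpha,\red}A,\check\alpha)\to(G\ltimes_{\beta,\red}B,\check\beta)$. This yields that
\[
\check G\ltimes_\red\check\phi\colon \bigl(\check G\ltimes_{\check\alpha,\red}G\ltimes_{\alpha,\red}A,\check{\check\alpha}\bigr)\longrightarrow\bigl(\check G\ltimes_{\check\beta,\red}G\ltimes_{\beta,\red}B,\check{\check\beta}\bigr)
\]
is $\check{\check G}$-equivariantly sequentially split. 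By Pontrjagin duality $\check{\check G}$ is canonically $G$, and by Takesaki--Takai duality (Theorem \ref{TTduality}) there are $G$-equivariant isomorphisms $(\check G\ltimes_{\check\alpha,\red}G\ltimes_{\alpha,\red}A,\check{\check\alpha})\cong(\IK(L^2(G))\otimes A,\alpha_\IK)$ and likewise for $B$. Using the naturality of this isomorphism in the coefficient algebra, under these identifications $\check G\ltimes_\red\check\phi$ corresponds to $\id_{\IK(L^2(G))}\otimes\phi\colon(\IK(L^2(G))\otimes A,\alpha_\IK)\to(\IK(L^2(G))\otimes B,\beta_\IK)$. Thus $\id_{\IK(L^2(G))}\otimes\phi$ is $G$-equivariantly sequentially split, and since $\phi$ is non-degenerate, Proposition \ref{proptensorwithcompacts} allows us to conclude that $\phi$ itself is $G$-equivariantly sequentially split.

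The one point that is not purely formal is the naturality of the Takesaki--Takai isomorphism invoked above, namely that the doubly dualized $*$-homomorphism $\check G\ltimes_\red G\ltimes_\red\phi$ is intertwined with $\id_{\IK(L^2(G))}\otimes\phi$ by the isomorphisms of Theorem \ref{TTduality}. This can be read off from the explicit description of the isomorphism recalled after the statement of Theorem \ref{TTduality}: that isomorphism is assembled from the embedding of $A$ via $\alpha$ together with the canonical copies of $C_0^\red(G)$ and $C^*_\red(G)$ inside the iterated multiplier algebra, and each of these pieces is manifestly compatible with applying $\phi$ to the coefficient algebra, so the claim follows by a routine bookkeeping of the leg-numbered expressions involved. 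I expect this compatibility verification to be the main obstacle, although it is entirely computational; the remainder of the argument is a formal combination of Propositions \ref{form-crossed-products} and \ref{proptensorwithcompacts} with duality.
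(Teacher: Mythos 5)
Your proposal is correct and follows essentially the same route as the paper: the forward direction is Proposition \ref{form-crossed-products}, and the converse applies that proposition a second time, identifies $\check{\check{\phi}}$ with $\id_{\IK(L^2(G))}\otimes\phi$ via Takesaki--Takai duality (Theorem \ref{TTduality}), and concludes with Proposition \ref{proptensorwithcompacts} using non-degeneracy of $\phi$. The naturality point you flag is exactly the one the paper handles, by transporting an approximate left-inverse for $\check{\check{\phi}}$ through componentwise application of the Takesaki--Takai isomorphism.
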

\begin{proof}
If $\phi$ is $G$-equivariantly sequentially split, then Proposition \ref{form-crossed-products} shows  
that $\check{\phi}$ is $\check{G}$-equivariantly sequentially split. 
On the other hand, if $\check{\phi}$ is $\check{G}$-equivariantly sequentially split, then 
Proposition \ref{form-crossed-products} implies that $\check{\check{\phi}}$ is $G$-equivariantly sequentially split. Under 
the $G$-equivariant isomorphism given by Takesaki-Takai duality, see 
Theorem \ref{TTduality}, the map $\check{\check{\phi}}$ corresponds 
to $\id_{\IK} \otimes \phi: (\IK \otimes A, \alpha_\IK) \to (\IK \otimes B, \beta_\IK) $. Here we abbreviate $ \IK = \IK(L^2(G)) $. 
Given an equivariant approximate left-inverse $ (\check{G} \ltimes_{\check{\beta}, \red} G \ltimes_{\beta, \red} B,\check{\check{\beta}}) 
\to ((\check{G} \ltimes_{\check{\alpha}, \red} G \ltimes_{\alpha, \red} A)_\infty,\check{\check{\alpha}}_\infty)$ for $\check{\check{\phi}}$, componentwise application of Takesaki-Takai duality yields a $G$-equivariant approximate 
left-inverse $(\IK \otimes B, \beta_\IK) \to ((\IK \otimes A)_\infty, (\alpha_\IK)_\infty)$ for $\id_\IK \otimes \phi$. 
Hence, $\id_\IK \otimes \phi$ is $G$-equivariantly sequentially split. The claim now follows from 
Proposition \ref{proptensorwithcompacts}.
\end{proof}


\section{The Rokhlin property and approximate representability}

In this section we introduce the key notions of this paper, namely the spatial Rokhlin property and 
spatial approximate representability for actions of quantum groups. Moreover we prove that these notions are dual to each 
other. 

\subsection{The spatial Rokhlin property} 

Let us start by defining the spatial Rokhlin property. 
\begin{defi} \label{def:rp}
Let $G$ be a coexact compact quantum group and $ (A, \alpha) $ a separable $G$-\cstar-algebra. We say that $\alpha$ has the spatial Rokhlin property if the second-factor embedding
\[
\iota_\alpha=\eins\boxtimes\id_A: (A,\alpha) \to \big( C^\red(G)\boxtimes A, \Delta\boxtimes\alpha \big)
\]
is $G$-equivariantly sequentially split.
\end{defi}

\begin{rem} \label{spatialremark1}
Definition \ref{def:rp} is indeed a generalization of the classical notion of the Rokhlin property, see \cite[4.3]{BarlakSzabo15}. 
Indeed, for a classical compact group $ G $, the braided tensor product $ C(G) \boxtimes A $ agrees with the ordinary tensor product 
$ C(G) \otimes A $. The term \emph{spatial} in Definition \ref{def:rp} refers to the fact that we have chosen to work with 
minimal (braided) tensor products; we will comment further on the implications of this choice in Remark \ref{spatialremark2} below. 
\end{rem} 

In special cases the Rokhlin property can be recast in the following way. Recall that if $ G $ is a compact 
quantum group and $ (A,\alpha) $ a $G$-\cstar-algebra we write $ \CS(A) $ for the spectral subalgebra of $A$. 
We shall use the Sweedler notation $ \alpha(a) = a_{(-1)} \otimes a_{(0)} $ for the coaction $ \alpha: \CS(A) \to \CO(G) \odot \CS(A) $. 

\begin{prop} \label{char:rp}
Let $G$ be a coexact compact quantum group and $ (A, \alpha) $ a separable $G$-\cstar-algebra. 
\begin{enumerate}[label=\textup{(\alph*)},leftmargin=*] 
\item[a)] If $\alpha$ has the spatial Rokhlin property, then there exists a unital and $G$-equivariant 
$*$-homomorphism $\kappa: (C^\red(G),\Delta)\to \big( M(D_{\infty,A}), \alpha_\infty \big)$ satisfying
\[
a \kappa(f)=\kappa(a_{(-2)} f S(a_{(-1)})) a_{(0)}
\]
for all $f\in \CO(G)\subset C^\red(G)$ and $a\in\CS(A) \subset A$. Moreover we have $ \|\kappa(S(a_{(-1)})) a_{(0)} \| \leq \|a\| $ 
for all $ a \in \CS(A) $. 
\item[b)] Assume $G$ is coamenable. If a $*$-homomorphism $\kappa: C^\red(G) \rightarrow M(D_{\infty, A})$ 
as in $ a) $ exists such that $ \|\kappa(S(a_{(-1)})) a_{(0)} \| \leq \|a\| $ for all $ a \in \CS(A) $, 
then $\alpha$ has the spatial Rokhlin property. 
\end{enumerate}
\end{prop}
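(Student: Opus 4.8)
The plan is to reformulate the spatial Rokhlin property in terms of an equivariant approximate left-inverse and then translate the data of such a map into the $*$-homomorphism $\kappa$ and vice versa. For part a), suppose $\iota_\alpha = \eins \boxtimes \id_A$ is $G$-equivariantly sequentially split, with an equivariant approximate left-inverse $\psi : (C^\red(G) \boxtimes A, \Delta \boxtimes \alpha) \to (A_\infty, \alpha_\infty)$ satisfying $\psi \circ \iota_\alpha = \id_A$ (as maps $A \to A_\infty$). Using the trivialization isomorphism $T_\alpha$ from Lemma \ref{trivialization}, I can identify $C^\red(G) \boxtimes A$ $G$-equivariantly with $(C^\red(G) \otimes A, \Delta \otimes \id)$, under which $\iota_\alpha$ corresponds to $\alpha : A \to C^\red(G) \otimes A$ and the first-factor embedding $f \boxtimes \eins \mapsto f \otimes \eins$ becomes $C^\red(G) \to M(C^\red(G) \otimes A)$. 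So $\psi$ gives an equivariant $*$-homomorphism $\tilde\psi : (C^\red(G) \otimes A, \Delta \otimes \id) \to (A_\infty, \alpha_\infty)$ with $\tilde\psi \circ \alpha = \id_A$. The candidate for $\kappa$ is obtained by restricting (the multiplier extension of) $\tilde\psi$ to the first tensor factor $C^\red(G) \otimes \eins$; nondegeneracy of $\alpha$ and $\tilde\psi$ (inherited from nondegeneracy of $\iota_\alpha$, or arranged by passing to $D_{\infty,A}$) ensures that $\tilde\psi$ extends to multipliers and that $\kappa := \tilde\psi|_{C^\red(G) \otimes \eins}$ lands in $M(D_{\infty,A})$ and is unital. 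Equivariance of $\kappa$ as a map $(C^\red(G), \Delta) \to (M(D_{\infty,A}), \alpha_\infty)$ follows from equivariance of $\tilde\psi$ together with the fact that the first-leg embedding $C^\red(G) \to M(C^\red(G) \otimes A)$ intertwines $\Delta$ and $\Delta \otimes \id$.

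The covariance relation $a\kappa(f) = \kappa(a_{(-2)} f S(a_{(-1)})) a_{(0)}$ for $f \in \CO(G)$, $a \in \CS(A)$ is then a Hopf-algebraic identity that I would verify by applying $\tilde\psi$ to the algebraic relation holding inside $C^\red(G) \otimes A$. Concretely, for $a \in \CS(A)$ one has $\alpha(a) = a_{(-1)} \otimes a_{(0)}$, and in $M(C^\red(G) \otimes A)$ the relation $(f \otimes \eins)\alpha(a) = \alpha(a_{(0)})(a_{(-2)} f S(a_{(-1)}) \otimes \eins)$ holds by coassociativity and the antipode axiom — this is exactly the standard manipulation expressing that the crossed-product-type relations hold on spectral subspaces. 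Applying $\tilde\psi$, using $\tilde\psi \circ \alpha = \id$ and $\tilde\psi|_{C^\red(G)\otimes\eins} = \kappa$, gives $\kappa(f) a = a_{(0)} \kappa(a_{(-2)} f S(a_{(-1)}))$; after adjusting for the left/right conventions (applying the involution or reversing roles of $a$ and $a^*$) this is the stated identity. For the norm estimate $\|\kappa(S(a_{(-1)})) a_{(0)}\| \le \|a\|$: note that $\kappa(S(a_{(-1)})) a_{(0)}$ is, up to the manipulation above, the image under $\tilde\psi$ of an element whose norm is controlled by $\|a\|$; more precisely, $S(a_{(-1)}) \otimes a_{(0)} \in C^\red(G) \otimes A$ can be rewritten using the antipode as (the image of) $\alpha(a)$ precomposed with a slice, and one checks $\|S(a_{(-1)}) \otimes a_{(0)}\| = \|a\|$ because $\alpha$ is isometric and $S$ acts unitarily on matrix coefficients in the appropriate sense. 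Since $*$-homomorphisms are contractive, the bound follows.

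For part b), with $G$ coamenable, the task is to reverse the construction: from a unital equivariant $\kappa : (C(G), \Delta) \to (M(D_{\infty,A}), \alpha_\infty)$ satisfying the covariance relation and the norm bound, produce an equivariant approximate left-inverse for $\iota_\alpha$. The natural definition is $\psi(f \boxtimes \eins) := \kappa(f)$ and $\psi(\eins \boxtimes a) := a$, extended multiplicatively; equivalently, via $T_\alpha$, define $\tilde\psi : C(G) \otimes A \to D_{\infty,A}$ by $\tilde\psi(f \otimes a) := \kappa(f) a$ on the algebraic level and extend. The covariance relation is precisely what makes $\tilde\psi$ well-defined as a $*$-homomorphism on the dense $*$-subalgebra $\CO(G) \odot \CS(A)$, and the norm estimate (together with coamenability, which guarantees $C(G) = C^\red(G) = C^\max(G)$ so that there is no gap between the algebraic relations and the universal $C^*$-norm) is what allows the extension to the full braided tensor product $C(G) \boxtimes A$. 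One then checks $\tilde\psi \circ \alpha = \id_A$ directly from $\kappa$ being unital and the relation at $f = \eins$, and equivariance of $\tilde\psi$ from equivariance of $\kappa$ and $G$-equivariance of $\alpha$.

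The main obstacle I anticipate is part b), specifically showing that the map defined on the algebraic level extends continuously to the minimal braided tensor product. This is exactly where coamenability is essential and where the norm estimate $\|\kappa(S(a_{(-1)})) a_{(0)}\| \le \|a\|$ does the real work: one must see that this pointwise bound on spectral elements, combined with the covariance relation and a standard averaging/positivity argument (using the Haar state and the Fell-absorption-type structure of $C(G) \boxtimes A \cong C(G) \otimes A$), forces $\|\tilde\psi(x)\| \le \|x\|$ for all $x$ in the dense subalgebra. For non-coamenable $G$ this step would fail because the reduced norm on $C^\red(G) \boxtimes A$ need not be the maximal one, which is the reason part b) carries the coamenability hypothesis while part a) does not. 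The verification of the Hopf-algebraic covariance identity in part a) is the other point requiring care, but it is a routine (if notationally heavy) Sweedler-calculus computation using coassociativity, the antipode axiom, and the explicit form of the trivialization $T_\alpha$.
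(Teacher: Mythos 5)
Your part a) follows the paper's own route: restrict the multiplier extension of the approximate left-inverse $\psi$ to the first-factor copy of $C^\red(G)$ and push the braided commutation relation $\iota_A(a)\iota_{C^\red(G)}(f)=\iota_{C^\red(G)}(a_{(-2)}fS(a_{(-1)}))\iota_A(a_{(0)})$ through $\psi$. Two of your side remarks are off, however. First, the relation you write in the trivialized picture has the factors in the wrong order; the correct identity is $\alpha(a)(f\otimes\eins)=(a_{(-2)}fS(a_{(-1)})\otimes\eins)\alpha(a_{(0)})$, which is exactly the image of the braided relation under $T_\alpha$, and no involution trick or role reversal is needed. Second, and more seriously, your justification of the norm bound via $\|S(a_{(-1)})\otimes a_{(0)}\|=\|a\|$ "because $S$ acts unitarily" is false in general: the antipode of a non-Kac compact quantum group is unbounded, and the elementary tensor $S(a_{(-1)})\otimes a_{(0)}$ need not have norm $\|a\|$. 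The correct observation (which your preceding sentence already contains in spirit) is that the \emph{product} $\iota_{C^\red(G)}(S(a_{(-1)}))\iota_A(a_{(0)})$ inside the braided tensor product equals $T_\alpha^{-1}(\eins\otimes a)$ by the antipode axiom, so $\kappa(S(a_{(-1)}))a_{(0)}=\psi(T_\alpha^{-1}(\eins\otimes a))$ has norm at most $\|\eins\otimes a\|=\|a\|$ because $\psi\circ T_\alpha^{-1}$ is a $*$-homomorphism.

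For part b) there is a genuine gap. You propose to set $\tilde\psi(f\otimes a)=\kappa(f)a$ under the trivialization, but under $T_\alpha$ the second-factor embedding of $A$ corresponds to $\alpha$, not to $\eins\otimes(\cdot)$; so $\eins\otimes a$ must be sent to $\iota(a):=\kappa(S(a_{(-1)}))a_{(0)}$, not to $a$, and your "equivalently" is incorrect. More importantly, you never supply the mechanism by which the algebraically defined map becomes a $*$-homomorphism on the C*-completion; the proposed "averaging/positivity argument using the Haar state and Fell-absorption-type structure" is not an argument and is not how this step works. The paper's route is: (i) a concrete Sweedler computation, using the covariance relation and the antipode axioms, showing that $\iota$ is multiplicative and $*$-preserving on $\CS(A)$ and that its range commutes with the range of $\kappa$; (ii) the hypothesized norm bound is precisely the statement $\|\iota(a)\|\le\|a\|$ on $\CS(A)$, which lets $\iota$ extend to all of $A$; (iii) since $\kappa$ and $\iota$ have commuting ranges, the universal property of the maximal tensor product yields a $*$-homomorphism on $C(G)\otimes_{\max}A$, and coamenability enters through nuclearity of $C(G)$, which identifies $\otimes_{\max}$ with $\otimes_{\min}$ so that the map lives on $C(G)\otimes A\cong C(G)\boxtimes A$; equivariance and the left-inverse property are then checked on generators. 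Your attribution of coamenability to "$C^\max(G)=C^\red(G)$ closing the gap between algebraic relations and the universal norm" points at the right phenomenon but at the wrong level: what is used is the tensor-product statement, not the group-algebra statement. Without step (i) --- the verification that $a\mapsto\kappa(S(a_{(-1)}))a_{(0)}$ is a $*$-homomorphism --- the extension problem you yourself flag as the main obstacle cannot be resolved by the norm bound alone.
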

\begin{proof}
$ a) $ Assume first that $\alpha$ has the spatial Rokhlin property. Let
\[
\psi: \big(C^\red(G)\boxtimes A, \Delta\boxtimes\alpha \big) \to (A_\infty,\alpha_\infty)
\]
be an equivariant, approximate left-inverse for $\iota_\alpha$ as required by Definition \ref{def:rp}. Since $\iota_\alpha$ 
is non-degenerate, the image of this $*$-homomorphism is contained in $D_{\infty,\alpha}$, 
and $\psi: C^\red(G) \boxtimes A \rightarrow D_{\infty, A} $ is again nondegenerate. 
Let us also denote the unique strictly continuous extension of $ \psi $ to multipliers by the same letter, so that 
we have 
\[
\psi: M\big(C^\red(G)\boxtimes A \big) \to M(D_{\infty,A}).
\]
By equivariance of $\psi$, the unital $*$-homomorphism 
\[
\kappa=\psi\circ\iota_{C^\red(G)}: C^\red(G)\to M(D_{\infty,A})
\] 
is also equivariant. According to the definition of the braided tensor product, we have
\[
\iota_A(a) \iota_{C^\red(G)}(f) = \iota_{C^\red(G)}\big(a_{(-2)}f S(a_{(-1)})\big) \iota_A(a_{(0)})
\]
for all $f\in\CO(G)$ and $a\in\CS(A)$. The desired twisted commutation relation for $\kappa$ then follows 
by applying $\psi$. Moreover, the norm condition $ \|\kappa(S(a_{(-1)})) a_{(0)} \| \leq \|a\| $ 
is a consequence of the formula $ \kappa(S(a_{(-1)})) a_{(0)} = \psi(T_\alpha^{-1}(1 \otimes a)) $ for $ a \in \CS(A) $ 
and the fact that $ \psi $ and the isomorphism $ T_\alpha^{-1} $ from Lemma \ref{trivialization}
are $*$-homomorphisms between \cstar-algebras. 

$ b) $ Consider the map $ \iota: \CS(A) \rightarrow D_{\infty, A} $ given by $ \iota(a) = \kappa(S(a_{(-1)})) a_{(0)} $. Then 
the commutation relation for $ \kappa $ gives 
\[
\iota(a) \kappa(f) = \kappa(S(a_{(-3)}) a_{(-2)}f S(a_{(-1)})) a_{(0)} = \kappa(f) \iota(a) 
\]
for any $ f \in \CO(G) $, using the antipode relation for the Hopf algebra $ \CO(G) $. Using that $ S $ 
is antimultiplicative we therefore obtain 
\[
\begin{array}{lcl}
\iota(ab) & = & \kappa(S(a_{(-1)}b_{(-1)})) a_{(0)} b_{(0)} \\ 
& = & \kappa(S(b_{(-1)})) \kappa(S(a_{(-1)})) a_{(0)} b_{(0)} \\ 
& = & \kappa(S(b_{(-1)})) \iota(a) b_{(0)} \\ 
& = & \iota(a) \kappa(S(b_{(-1)})) b_{(0)} = \iota(a) \iota(b) 
\end{array}
\]
for all $ a,b \in \CS(A) $. Moreover, using $ S(h^*) = S^{-1}(h)^* $ for all $ h \in \CO(G) $ we have 
\[
\begin{array}{lcl}
\iota(a^*) = \kappa(S(a_{(-1)}^*)) a_{(0)}^* &=& \kappa(S^{-1}(a_{(-1)})^*) a_{(0)}^* \\
&=& (a_{(0)} \kappa(S^{-1}(a_{(-1)})))^* \\
&=& (\kappa(a_{(-2)} S^{-1}(a_{(-3)}) S(a_{(-1)})) a_{(0)})^* \\
&=& (\kappa(S(a_{(-1)})) a_{(0)})^* = \iota(a)^* 
\end{array}
\]
for $ a \in \CS(A) $. It follows that $ \iota $ is a $ * $-homomorphism. 
By assumption $ \iota $ is bounded, so that it extends to a $ * $-homomorphism $ \iota: A \rightarrow D_{\infty, A} $. 

Combining $ \kappa $ and $ \iota $ we obtain a $ * $-homomorphism $ \psi = \kappa \otimes \iota: C(G) \otimes A 
\cong C(G) \otimes_{\text{max}} A\rightarrow D_{\infty, A} $, using the universal property of the maximal tensor product and 
nuclearity of $ C(G) $. 
Since $ \kappa $ is equivariant one checks that $ \iota $ maps into the fixed point algebra of 
$ A_{\infty, \alpha} $, and together with equivariance of $ \kappa $ it follows that $ \psi: (C(G) \otimes A, \Delta \otimes \id) 
\rightarrow (D_{\infty, A}, \alpha_\infty) $ is $G$-equivariant. 
Using the isomorphism from Lemma \ref{trivialization}
we see that it defines an approximate left-inverse for $ \iota_A: A \to C(G) \boxtimes A \cong C(G) \otimes A $.  
\end{proof}

\begin{rem} 
Let us point out that the norm condition in part $ b) $ of Proposition \ref{char:rp} is automatically satisfied if $ G $ is a 
finite quantum group. Indeed, in this case the spectral subalgebra $ \CS(A) $ is equal to $ A $, and the claim follows 
from the fact that $ * $-homomorphisms between \cstar-algebras are contractive. 
It can also be shown that that the norm condition always holds if $ G $ is a classical compact group, but 
it seems unclear whether it is automatic in general. 
\end{rem}

Classically, a Rokhlin action of a compact group on an abelian \cstar-algebra $ C_0(X) $ induces a free action of $ G $ 
on $ X $. In the quantum case, an analogue of the notion of freeness has been formulated by Ellwood in \cite{Ellwood00}. 
Namely, an action $ \alpha: A \rightarrow C^\red(G) \otimes A $ of a compact quantum group $ G $ on a \cstar-algebra $ A $ is called 
free if $ [(\eins \otimes A) \alpha(A)] = C^\red(G) \otimes A $. It is shown in \cite[Theorem 2.9]{Ellwood00} that this generalizes the 
classical concept of freeness. 

Let us verify that the spatial Rokhlin property implies freeness also in the quantum case. 

\begin{prop} 
Let $ G $ be a coexact compact quantum group and let $ (A, \alpha) $ be a separable $G$-\cstar-algebra. 
If $\alpha$ has the spatial Rokhlin property, then it is free. 
\end{prop}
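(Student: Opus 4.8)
The plan is to deduce freeness of $\alpha$ from freeness of the coaction $\Delta\boxtimes\alpha$ on the braided tensor product $C^\red(G)\boxtimes A$, combined with the general principle that freeness is inherited by the domain of an equivariantly sequentially split $*$-homomorphism. Since the spatial Rokhlin property (Definition \ref{def:rp}) asserts precisely that $\iota_\alpha\colon(A,\alpha)\to(C^\red(G)\boxtimes A,\Delta\boxtimes\alpha)$ is equivariantly sequentially split, these two ingredients together yield the claim.

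First I would verify that $\Delta\boxtimes\alpha$ is free. By Lemma \ref{trivialization} there is a $G$-equivariant $*$-isomorphism $(C^\red(G)\boxtimes A,\Delta\boxtimes\alpha)\cong(C^\red(G)\otimes A,\Delta\otimes\id)$, and freeness is manifestly invariant under $G$-equivariant $*$-isomorphism, so it suffices to see that $\Delta\otimes\id$ on $C^\red(G)\otimes A$ is free. Applying the adjoint operation to the Hopf \cstar-algebra density condition of Definition \ref{defhopfcstar} gives $[(\eins\otimes C^\red(G))\Delta(C^\red(G))]=C^\red(G)\otimes C^\red(G)$, and tensoring this with $[A\cdot A]=A$ then yields $[(\eins\otimes C^\red(G)\otimes A)(\Delta\otimes\id)(C^\red(G)\otimes A)]=C^\red(G)\otimes C^\red(G)\otimes A$, which is exactly freeness of $\Delta\otimes\id$.

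The main step will be the general principle: if $\phi\colon(A,\alpha)\to(B,\beta)$ is equivariantly sequentially split and $\beta$ is free, then $\alpha$ is free. Here I would fix an equivariant approximate left-inverse $\psi\colon(B,\beta)\to(A_\infty,\alpha_\infty)$ and, given $f\in C^\red(G)$, $a\in A$ and $\eps>0$, invoke freeness of $\beta$ to obtain $b_1,\dots,b_m,c_1,\dots,c_m\in B$ with $\|f\otimes\phi(a)-\sum_i(\eins\otimes b_i)\beta(c_i)\|<\eps$. Applying the contractive $*$-homomorphism $\id\otimes\psi\colon C^\red(G)\otimes B\to C^\red(G)\otimes A_\infty\subset(C^\red(G)\otimes A)_\infty$ — the latter inclusion being available because $G$ is coexact, see Remark \ref{exactnessrem} — and using $\psi\circ\phi(a)=a$ together with the equivariance identity $(\id\otimes\psi)\circ\beta=\alpha_\infty\circ\psi$, this produces $\|f\otimes a-\sum_i(\eins\otimes\psi(b_i))\alpha_\infty(\psi(c_i))\|\le\eps$ in $(C^\red(G)\otimes A)_\infty$. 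Choosing bounded lifts $(b_i^{(n)})_n,(c_i^{(n)})_n\in\ell^\infty(A)$ of $\psi(b_i),\psi(c_i)$, the left-hand element is represented by the sequence $n\mapsto\sum_i(\eins\otimes b_i^{(n)})\alpha(c_i^{(n)})$ while $f\otimes a$ is represented by the constant sequence, so $\limsup_n\|f\otimes a-\sum_i(\eins\otimes b_i^{(n)})\alpha(c_i^{(n)})\|\le\eps$. Since $\sum_i(\eins\otimes b_i^{(n)})\alpha(c_i^{(n)})\in[(\eins\otimes A)\alpha(A)]$ for every $n$ and $\eps$, $f$, $a$ were arbitrary, the closed subspace $[(\eins\otimes A)\alpha(A)]$ contains $C^\red(G)\otimes A$, and hence equals it; that is, $\alpha$ is free. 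Applying this with $\phi=\iota_\alpha$ completes the argument.

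I expect the only delicate point to lie in this last step: one has to make sure the amplified approximate left-inverse $\id\otimes\psi$ is a legitimate $*$-homomorphism with values in a sequence algebra, and that the equivariance relation for $\psi$ is available in the sharp form with target $C^\red(G)\otimes A_\infty$ rather than merely $(C^\red(G)\otimes A)_\infty$ — this is exactly the place where coexactness of $G$ enters. Step 1 and the concluding lifting argument are routine.
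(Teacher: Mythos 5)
Your proposal is correct and follows essentially the same route as the paper: trivialize the braided tensor product via Lemma \ref{trivialization} to see that $\Delta\boxtimes\alpha$ is free, then transport freeness back along the equivariant approximate left-inverse $\psi$ by applying $\id\otimes\psi$ (where coexactness guarantees $C^\red(G)\otimes A_\infty\subset(C^\red(G)\otimes A)_\infty$) and lifting to $\ell^\infty(A)$. The only cosmetic differences are that you isolate the transfer step as a general lemma and work with an $\eps$-approximate factorization for arbitrary $f$, whereas the paper uses the exact Hopf-algebraic factorization for $f$ in the dense subalgebra $\CO(G)$; both are valid.
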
 
\begin{proof}
Let $ \psi: C^\red(G) \boxtimes A \rightarrow A_\infty $ be an equivariant approximate left-inverse for the inclusion 
map $ \iota_A: A \to C^\red(G) \boxtimes A $. Notice that the action of $ G $ on $ (C^\red(G) \boxtimes A, \Delta \boxtimes \alpha) 
\cong (C^\red(G) \otimes A, \Delta \otimes \id) $ 
is free, so that 
\[ 
[(\Delta \boxtimes \alpha)(C^\red(G) \boxtimes A)(\eins \otimes C^\red(G) \boxtimes A)] = C^\red(G) \otimes (C^\red(G) \boxtimes A).
\]
In fact, for any $ f \in \CO(G) \subset C^\red(G) $ and $ a \in A $ 
we find finitely many elements $ x^i, y^i \in C^\red(G) \boxtimes A $ 
such that $ f \otimes (\eins \boxtimes a) = \sum_i (\Delta \boxtimes \alpha)(x^i) (\eins \otimes y^i) $ 
in $ C^\red(G) \otimes (C^\red(G) \boxtimes A) $. Applying $ \id \otimes \psi $ to this equality and using equivariance, we obtain 
\[
f \otimes \psi(\eins \boxtimes a) = f \otimes a = \sum_i \alpha_\infty \psi(x^i) (\eins \otimes \psi(y^i)) 
\]
in $ (C^\red(G) \otimes A)_\infty $. Consider lifts $ (\tilde{x}^i_n)_{n \in \mathbb{N}} $ 
and $ (\tilde{y}^i_n)_{n \in \mathbb{N}} $ in $ \ell^\infty(A) $ for $ \psi(x^i), \psi(y^i) $, respectively. 
Then 
\[
f \otimes a = \lim_{n\to\infty} \sum_i \alpha(\tilde{x}^i_n)(\eins \otimes \tilde{y}^i_n),  
\]
which implies $ C^\red(G) \odot A \subset [\alpha(A) (\eins \otimes A)] $, and hence 
also $ C^\red(G) \otimes A = [\alpha(A)(\eins \otimes A)] $.
\end{proof}

Let us point out that the Rokhlin property is strictly stronger than freeness; this is already the case classically. For instance, 
the antipodal action of $ G = \mathbb{Z}_2 $ on $ S^1 $ does not have the Rokhlin property. 

Here comes the first main result of this paper:

\begin{theorem}
Let $G$ be a coexact compact quantum group and let $ (A,\alpha) $ be a separable $G$-\cstar-algebra. If $\alpha$ has the spatial Rokhlin property, then the two canonical embeddings
\[
A^\alpha\into A \quad\text{and}\quad G \ltimes_{\alpha, \red} A \into \IK(L^2(G))\otimes A
\]
are sequentially split. In particular, if $A$ has any of the following properties, then so do the fixed-point algebra $A^\alpha$ 
and the crossed product $G \ltimes_{\alpha, \red} A$:
\begin{itemize}
\item being simple;
\item being nuclear and satisfying the UCT;
\item having finite nuclear dimension or decomposition rank;
\item absorbing a given strongly self-absorbing \cstar-algebra $\CD$.
\end{itemize}
\end{theorem}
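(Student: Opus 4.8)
The plan is to reduce everything to the results of Section~3, after using Lemma~\ref{trivialization} to replace the braided tensor product by an ordinary one. Recall that the $G$-equivariant isomorphism $T_\alpha$ of Lemma~\ref{trivialization} carries the second-factor embedding $\iota_\alpha\colon(A,\alpha)\to(C^\red(G)\boxtimes A,\Delta\boxtimes\alpha)$ onto the coaction $\alpha$ itself, viewed as an equivariant $*$-homomorphism $(A,\alpha)\to(C^\red(G)\otimes A,\Delta\otimes\id)$. Hence the spatial Rokhlin property (Definition~\ref{def:rp}) says precisely that this latter map is $G$-equivariantly sequentially split, and the whole argument will consist in transporting a fixed equivariant approximate left-inverse along the constructions of Section~3.

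For the fixed-point algebra I would apply Proposition~\ref{form-fixed-point-algs} to $\iota_\alpha$, obtaining that the induced $*$-homomorphism $A^\alpha\to(C^\red(G)\boxtimes A)^{\Delta\boxtimes\alpha}$ is sequentially split. It then remains to recognize the target. Since $G$ is compact, an element $f\in C^\red(G)$ with $\Delta(f)=\eins\otimes f$ must be a scalar: applying the Haar state $\phi$ in the first leg gives $f=(\phi\otimes\id)\Delta(f)=\phi(f)\eins$. By a slicing argument this yields $(C^\red(G)\otimes A)^{\Delta\otimes\id}=\eins\otimes A$, and transporting back along $T_\alpha$ shows $(C^\red(G)\boxtimes A)^{\Delta\boxtimes\alpha}\cong A$. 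Moreover, since $T_\alpha(\eins\boxtimes a)=\alpha(a)=\eins\otimes a$ for $a\in A^\alpha$, the map induced by $\iota_\alpha$ becomes, under this identification, the canonical inclusion $A^\alpha\hookrightarrow A$; hence $A^\alpha\hookrightarrow A$ is sequentially split.

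For the crossed product I would feed the same equivariant map into Proposition~\ref{form-crossed-products}, getting that $G\ltimes_\red\alpha\colon G\ltimes_{\alpha,\red}A\to G\ltimes_{\Delta\otimes\id,\red}(C^\red(G)\otimes A)$ is (even $\check G$-equivariantly) sequentially split, hence in particular sequentially split. Here the coaction $\Delta\otimes\id$ acts only on the first tensor leg, so the crossed product splits off the trivial factor, $G\ltimes_{\Delta\otimes\id,\red}(C^\red(G)\otimes A)\cong(G\ltimes_{\Delta,\red}C^\red(G))\otimes A$, and $G\ltimes_{\Delta,\red}C^\red(G)\cong\IK(L^2(G))$ canonically; the latter is the familiar fact that the reduced crossed product of a regular quantum group acting on itself by translation is the compacts, which can also be read off from Theorem~\ref{TTduality} applied to the trivial action on $\IC$. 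Unwinding these identifications---a routine bookkeeping computation using $\Delta(f)=V(f\otimes\eins)V^*$ together with $V\in\mathcal{L}(G)'\otimes L^\infty(G)$ from Remark~\ref{right reg repr}---shows that $G\ltimes_\red\alpha$ is carried precisely onto the canonical embedding $G\ltimes_{\alpha,\red}A\hookrightarrow\IK(L^2(G))\otimes A$, so this embedding is sequentially split.

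The listed permanence statements then follow formally. By the permanence properties of sequentially split $*$-homomorphisms established in \cite{BarlakSzabo15}, whenever $\phi\colon C\to D$ is a sequentially split $*$-homomorphism between separable \cstar-algebras, $C$ inherits from $D$ each of the relevant properties: simplicity; nuclearity together with the UCT; finiteness of the nuclear dimension or of the decomposition rank; and $\CD$-absorption for a fixed strongly self-absorbing \cstar-algebra $\CD$. Applied to $A^\alpha\hookrightarrow A$ this gives the claim for the fixed-point algebra; applied to $G\ltimes_{\alpha,\red}A\hookrightarrow\IK(L^2(G))\otimes A$, together with the fact that each of these four properties is inherited by $\IK(L^2(G))\otimes A$ from $A$, it gives the claim for the crossed product. (All \cstar-algebras occurring here are separable by the standing second-countability assumptions.) The only step needing genuine care is the identification in the crossed-product case---verifying that the functorially induced map $G\ltimes_\red\alpha$, after the trivialization of Lemma~\ref{trivialization} and the isomorphism $G\ltimes_{\Delta,\red}C^\red(G)\cong\IK(L^2(G))$, really is the canonical embedding of the statement and not merely a conjugate of it; everything else is a direct appeal to the results of Section~3 and of \cite{BarlakSzabo15}.
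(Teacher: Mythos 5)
Your proposal is correct and follows essentially the same route as the paper: trivialize the braided tensor product via Lemma \ref{trivialization}, feed the Rokhlin left-inverse into Propositions \ref{form-fixed-point-algs} and \ref{form-crossed-products}, identify the targets (fixed points of $\Delta\boxtimes\alpha$ with $A$, and $G\ltimes_{\Delta\otimes\id,\red}(C^\red(G)\otimes A)$ with $\IK(L^2(G))\otimes A$ via Takesaki--Takai), and invoke the permanence results of \cite{BarlakSzabo15}. The extra details you supply (the Haar-state slicing argument for the fixed points, and the remark that the four properties pass from $A$ to $\IK(L^2(G))\otimes A$) are correct and only make explicit what the paper leaves implicit.
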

\begin{proof}
Let $ \psi: C^\red(G) \boxtimes A \rightarrow A_\infty $ be an equivariant approximate left-inverse for the 
embedding $ \iota_A: A \to C^\red(G) \boxtimes A $. The resulting commutative diagram of equivariant $ * $-homomorphisms 
\[ 
\xymatrix{
A \ar[rr] \ar[dr] && A_\infty \\
& C^\red(G) \boxtimes A \ar[ru]_\psi & 
}
\] 
induces a commutative diagram of $ * $-homomorphisms 
\[ 
\xymatrix{
A^\alpha \ar[rr] \ar[dr] && (A^\alpha)_\infty \\
& A \ar[ru]_\psi & 
}
\] 
by Proposition \ref{form-fixed-point-algs}. Notice here 
that Lemma \ref{trivialization} implies that $ (C^\red(G) \boxtimes A)^{\Delta \boxtimes \alpha} \cong A $ in such a way 
that the canonical embeddings of $A^\alpha$ on both sides are compatible. 

For the statement about the crossed product $G \ltimes_{\alpha, \red} A$, observe that the spatial Rokhlin property for $ \alpha $ 
means that $ \alpha: A \rightarrow C^\red(G) \otimes A $ is sequentially split, taking into account the isomorphism  
from Lemma \ref{trivialization}. According to Proposition \ref{form-crossed-products}, it follows that
the map 
\[
G \ltimes_\red \alpha:G \ltimes_{\alpha, \red} A \into G \ltimes_{\Delta \otimes \id_A, \red} (C^\red(G) \otimes A) 
\cong (G \ltimes_{\Delta, \red} C^\red(G)) \otimes A
\] 
is sequentially split. Moreover, by the Takesaki-Takai duality Theorem \ref{TTduality}, 
we have $G \ltimes_{\Delta, \red} C^\red(G) \cong \IK(L^2(G))$, and the resulting 
map $ G \ltimes_{\alpha, \red} A \rightarrow \IK(L^2(G)) \otimes A $ is the standard embedding. 

The asserted permanence properties are then a consequence of \cite[Theorem 2.9]{BarlakSzabo15}.
\end{proof}

\subsection{Spatial approximate representability} 

Let us now define spatial approximate representability. 

Let $G$ be a discrete quantum group and $ (A,\alpha) $ a separable $G$-\cstar-algebra. 
Denote by $W_\alpha = (\id \otimes \iota_G)(W) \in M(C_0(G)\otimes (G \ltimes_{\alpha,\red} A))$ where 
$\iota_G: C^*_\red(G) \rightarrow M(G \ltimes_{\alpha, \red} A) $ is the canonical embedding. 
The unitary $ W_\alpha $ implements the inner action of $ G $ on the crossed product, more precisely 
$ \ad(W_\alpha^*) $ turns $ M(G \ltimes_{\alpha,\red} A) $ into a $ G$-\cstar-algebra such that 
\[
W_\alpha^*(\eins\otimes a)W_\alpha=\alpha(a)
\] 
for all $a\in A \subset M(G \ltimes_{\alpha,\red} A) $. 

\begin{defi} \label{def:ar}
Let $G$ be a discrete quantum group and $ (A,\alpha) $ a separable $G$-\cstar-algebra. 
We say that $\alpha$ is spatially approximately representable if the natural embedding
\[
j_\alpha: (A,\alpha) \to \big(G \ltimes_{\alpha, \red} A, \ad(W_\alpha^*) \big)
\]
is $ G $-equivariantly sequentially split.
\end{defi}

\begin{prop} \label{char:ar}
Let $G$ be an exact discrete quantum group and $ (A,\alpha) $ a separable $G$-\cstar-algebra. 
\begin{enumerate}[label=\textup{(\alph*)},leftmargin=*] 
\item[a)] If $\alpha$ is spatially approximately representable, then there exists a unitary 
representation $V\in M(C_0(G)\otimes D_{\infty,A})$ of $G$ such that
\[
(\id\otimes\alpha_\infty)(V) = V_{23}^*V_{13}V_{23}
\]
and
\[
V^*(\eins\otimes a)V=\alpha(a)
\]
for all $a\in A$. 
\item[b)] If $G$ is amenable and there exists a unitary representation $V\in M(C_0(G)\otimes D_{\infty,A})$ as in $ a) $, then $\alpha$ 
is spatially approximately representable. 
\end{enumerate}
\end{prop}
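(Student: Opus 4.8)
The proposition is the discrete, dual counterpart of Proposition~\ref{char:rp}, and the plan is to mirror that proof: the unitary $W_\alpha$ plays the role of the embedding $\iota_{C^\red(G)}$, and the universal property of crossed products replaces the universal property of the maximal tensor product. Concretely, I would set up a correspondence between equivariant approximate left-inverses $\psi$ of $j_\alpha$ taking values in $D_{\infty,A}$ and unitary representations $V\in M(C_0(G)\otimes D_{\infty,A})$ satisfying the two stated identities, given by $\psi\mapsto V:=(\id\otimes\psi)(W_\alpha)$ in one direction and by integrating the covariant pair $(\iota,V)$ in the other.

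For part~a), I would start from an equivariant approximate left-inverse $\psi\colon(G\ltimes_{\alpha,\red}A,\ad(W_\alpha^*))\to(A_\infty,\alpha_\infty)$ of $j_\alpha$. Exactly as in the proof of Proposition~\ref{char:rp}a), nondegeneracy of $j_\alpha$ forces $\psi$ to take values in $D_{\infty,A}$ and to be nondegenerate onto it, hence to extend to a strictly continuous unital $*$-homomorphism $M(G\ltimes_{\alpha,\red}A)\to M(D_{\infty,A})$, again denoted $\psi$. Set $V=(\id\otimes\psi)(W_\alpha)\in M(C_0(G)\otimes D_{\infty,A})$. Since $W_\alpha=(\id\otimes\iota_G)(W)$ inherits from the multiplicative unitary the corepresentation identity $(\Delta\otimes\id)(W_\alpha)=(W_\alpha)_{13}(W_\alpha)_{23}$, applying the $*$-homomorphism $\psi$ in the last leg shows that $V$ is a unitary representation of $G$. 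The covariance $V^*(\eins\otimes a)V=\alpha(a)$ follows by applying $\id\otimes\psi$ to $W_\alpha^*(\eins\otimes a)W_\alpha=\alpha(a)$, using that $\psi\circ j_\alpha$ is the standard embedding $A\hookrightarrow D_{\infty,A}$. For the cocycle identity I would apply $\id\otimes\alpha_\infty$ to $V=(\id\otimes\psi)(W_\alpha)$, insert the equivariance of $\psi$ in the form $\alpha_\infty\circ\psi=(\id\otimes\psi)\circ\ad(W_\alpha^*)$, and use the leg-numbering identity $(\id\otimes\ad(W_\alpha^*))(W_\alpha)=(W_\alpha)_{23}^*(W_\alpha)_{13}(W_\alpha)_{23}$; this yields $(\id\otimes\alpha_\infty)(V)=V_{23}^*V_{13}V_{23}$.

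For part~b), I would reverse the construction of part~a). Given such a $V$, let $\iota\colon A\to D_{\infty,A}$ be the standard embedding; then $(\iota,V)$ is a covariant representation of $(A,\alpha)$, since $V$ is a unitary corepresentation of $C_0(G)$ and $V^*(\eins\otimes\iota(a))V=(\id\otimes\iota)(\alpha(a))$ by hypothesis. The universal property of the full crossed product produces a nondegenerate $*$-homomorphism $G\ltimes_{\alpha,\max}A\to M(D_{\infty,A})$ carrying $j_\alpha(a)$ to $\iota(a)$ and $W_\alpha$ to $V$; amenability of $G$ identifies $G\ltimes_{\alpha,\max}A$ with $G\ltimes_{\alpha,\red}A$, so we obtain $\psi\colon G\ltimes_{\alpha,\red}A\to M(D_{\infty,A})$. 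Nondegeneracy of $j_\alpha$ gives $\psi(G\ltimes_{\alpha,\red}A)=[\iota(A)\,\psi(G\ltimes_{\alpha,\red}A)\,\iota(A)]\subseteq[A\cdot A_\infty\cdot A]=D_{\infty,A}$, so $\psi$ is an approximate left-inverse of $j_\alpha$ with values in $D_{\infty,A}$. It remains to check that $\psi$ is $G$-equivariant, i.e.\ $\alpha_\infty\circ\psi=(\id\otimes\psi)\circ\ad(W_\alpha^*)=\ad(V^*)\circ\psi$; as both sides are $*$-homomorphisms, it suffices to verify this on a generating set, namely $j_\alpha(A)$ together with the slices $(\omega\otimes\id)(W_\alpha)$ for $\omega\in C_0(G)^*$. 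On $j_\alpha(A)$ it follows from $\alpha_\infty|_A=\alpha$ and the covariance identity; on the slices it follows by applying $\omega\otimes\id\otimes\id$ to the cocycle identity for $V$. Hence $j_\alpha$ is $G$-equivariantly sequentially split.

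The bulk of the work is leg-numbering bookkeeping, which is mechanical. The step requiring genuine care is the one in part~b) where a covariant representation with coefficients in $D_{\infty,A}$ is integrated to a $*$-homomorphism on the \emph{reduced} crossed product: this is precisely where the amenability hypothesis on $G$ enters, and one also has to make sure that the universal property recorded in the preliminaries applies to covariant representations whose coefficient algebra is merely a multiplier algebra, and that the resulting $\psi$ indeed lands in $D_{\infty,A}$ rather than only in $M(D_{\infty,A})$.
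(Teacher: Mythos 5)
Your proposal is correct and follows essentially the same route as the paper: in part~a) the unitary is defined as $V=(\id\otimes\psi)(W_\alpha)$ for the multiplier extension of the approximate left-inverse, and the two identities are read off from the equivariance of $\psi$ by slicing; in part~b) the covariant pair $(\iota,V)$ is integrated via the universal property of the full crossed product, amenability identifies this with the reduced crossed product, and equivariance is checked separately on the copy of $A$ and on the slices of $W_\alpha$. The points you flag as needing care (landing in $D_{\infty,A}$ rather than $M(D_{\infty,A})$, and where amenability enters) are exactly the ones the paper handles, so no changes are needed.
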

\begin{proof}
$ a) $ Assume that $\alpha$ is spatially approximately representable and let 
\[
\psi: G \ltimes_{\alpha, \red} A \rightarrow A_\infty
\] 
be a $G$-equivariant approximate left-inverse for the embedding 
$ A \rightarrow G \ltimes_{\alpha, \red} A $. 
Since this embedding is nondegenerate, the image of $\psi$ is contained in $D_{\infty,A}$, 
and $ \psi: G \ltimes_{\alpha, \red} A \rightarrow D_{\infty, A} $ is a nondegenerate $*$-homomorphism. 
Let us denote the unique strictly continuous extension of $ \psi $ by the same letter, so that 
\[
\psi: M(G\ltimes_{\alpha, \red} A) \to M(D_{\infty,A}). 
\]
We let
\[
V = (\id \otimes \psi)(W_\alpha) \in M(C_0(G) \otimes D_{\infty,A})
\]
be the unitary representation corresponding to the restriction of $ \psi $ to $ C^*_\red(G) $. 
Equivariance of $ \psi: G \ltimes_{\alpha, \red} A \rightarrow D_{\infty,A} $ means 
\[
(\id \otimes \psi)(W_\alpha^* (\eins \otimes x) W_\alpha) = \alpha_\infty \circ \psi(x) 
\] 
for all $ x \in G \ltimes_{\alpha, \red} A $.
In particular, for every $ a \in A $ we obtain 
\[
\alpha(a) = \alpha_\infty \circ \psi(a) = V^* (1 \otimes a) V. 
\]
Moreover, if $ y = (\omega \otimes \id)(W_\alpha) \in M(G \ltimes_{\alpha, \red} A) $ for $ \omega \in \IL(L^2(G))_* $, 
then 
\[
\begin{array}{ccl}
(\omega \otimes \id)(\id \otimes \alpha_\infty)(V) &=& \alpha_\infty \circ \psi(y) \\
&=& (\id \otimes \psi)(W_\alpha^* (\eins \otimes y) W_\alpha) \\
&=& (\omega \otimes \id \otimes \id)(\id \otimes \id \otimes \psi)(W^*_{23} W_{13} W_{23}) \\
&=& (\omega \otimes \id \otimes \id)(V_{23}^* V_{13} V_{23}).
\end{array}
\]
Since this holds for all $ \omega \in \IL(L^2(G))_* $ we conclude $ (\id \otimes \alpha_\infty)(V) = V_{23}^* V_{13} V_{23} $. 

$ b) $ Suppose that $ V \in M(C_0(G) \otimes D_{\infty,A}) $ is a unitary satisfying the conditions in $ a) $. 
Clearly, the canonical map $\iota: A \rightarrow D_{\infty,A} $ is equivariant and nondegenerate, and the formula 
\[
(\id\otimes\iota)\circ \alpha(a) = V^* (\eins \otimes \iota(a)) V \quad\text{for all}~a\in A
\] 
means that $ \iota $ and $ V $ define a covariant pair. 
Hence they combine to a nondegenerate $ * $-homomorphism 
\[
\psi: G \ltimes_{\alpha, \max} A \rightarrow M(D_{\infty,A})
\]
such that $ \psi \circ \iota(a) = a $ for all $ a \in A $. 
Since $ G $ is amenable, we can identify the full crossed product $ G \ltimes_{\alpha, \max} A $ with the reduced crossed 
product $ G \ltimes_{\alpha, \red} A $. To verify that $ \psi: G \ltimes_{\alpha, \red} A \rightarrow M(D_{\infty,A}) $ 
is $ G $-equivariant, it suffices to check this separately on the copies of $ C^*_\red(G) $ and $ A $ inside $ M(G \ltimes_{\alpha, \red} A) $. 

For $ a \in A \subset G \ltimes_{\alpha, \red} A $, the equivariance condition follows immediately from the 
relation $ \psi \circ \iota(a) = a $. On $ C^*_\red(G) $ it is obtained by slicing the equation
\[
\begin{array}{ccl}
(\id \otimes \beta_\infty)(\id \otimes \psi)(W_\beta) &=& (\id \otimes \beta_\infty)(V) \\
&=& V_{23}^* V_{13} V_{23} \\
&=& (\id \otimes \id \otimes \psi)(W_{23}^* W_{13} W_{23}). 
\end{array}
\]
in the first tensor factor and using $ C^*_\red(G) = [(\IL(L^2(G))_* \otimes \id)(W)] $. 
We conclude that $ \psi $ determines a $ G $-equivariant approximate left-inverse for the 
inclusion $ A \rightarrow G \ltimes_{\alpha, \red} A $. 
\end{proof}

\begin{rem} \label{spatialremark2}
Definition \ref{def:ar} generalizes approximate representability for actions of discrete amenable groups, 
see \cite[4.23]{BarlakSzabo15}. In the same way as already indicated in Remark \ref{spatialremark1}, the term \emph{spatial} 
in our definition is included since we work with minimal (braided) tensor products and reduced crossed products. 
In fact, approximate representability for classical discrete groups is defined in terms of the full crossed product instead, 
see \cite[4.23]{BarlakSzabo15}. Notice that the trivial action of the free group $\IF_2$ on $\IC$ is clearly approximately 
representable, but it is easily seen \emph{not} to be spatially approximately representable in the sense of Definition \ref{def:ar}.

It would therefore be more natural to develop the theory with maximal tensor products and full crossed products instead. 
However, this would mean in particular that one would have to work with full coactions taking values in maximal tensor products, which is technically less convenient. 

Let us point out that all the above mentioned issues disappear for coamenable compact quantum groups and amenable discrete quantum groups, respectively; in these cases, we may omit the term \emph{spatial}, and speak of the Rokhlin property and approximate representability.
\end{rem}

\subsection{Duality} 

We shall now show in several steps that the spatial Rokhlin property and spatial approximate representability are dual 
to each other.

\begin{prop} \label{aux iso Psi_alpha}
Let $G$ be a compact quantum group and $ (A, \alpha) $ a separable $G$-\cstar-algebra. Consider the $G$-equivariant $*$-homo\-morphism
\[
\iota_\alpha=\eins\boxtimes\id_A: (A,\alpha) \to \big( C^\red(G)\boxtimes A, \Delta\boxtimes\alpha \big).
\]
Then there exists a $\check{G}$-equivariant $*$-isomorphism
\[
\Psi_\alpha : \big(G \ltimes_{\Delta\boxtimes\alpha, \red} (C^\red(G)\boxtimes A), (\Delta\boxtimes\alpha)\check{\phantom{a}}\big) 
\to \big(\check{G} \ltimes_{\check{\alpha}, \red} ( G \ltimes_{\alpha, \red} A ), \ad(W_{\check{\alpha}}^*)\big)
\]
that makes the following diagram commutative: 
\[
\xymatrix{
(G \ltimes_{\alpha, \red} A, \check{\alpha}) \ar[rr]^{\hspace{-15mm}G\ltimes \iota_\alpha} \ar[rrd]_{j_{\check{\alpha}}} && 
\big( G \ltimes_{\Delta\boxtimes\alpha, \red} (C^\red(G)\boxtimes A), (\Delta\boxtimes\alpha)\check{\phantom{a}} \big) \ar[d]^{\Psi_\alpha} \\
&&  \big( \check{G} \ltimes_{\check{\alpha}, \red} ( G \ltimes_{\alpha, \red} A ), \ad(W_{\check{\alpha}}^*)  \big)
}
\]
\end{prop}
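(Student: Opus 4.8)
The plan is to reduce the statement, via the trivialisation lemma, to a form of Takesaki--Takai duality ``with coefficients in $A$'', and then to run the argument sketched after Theorem~\ref{TTduality}. First I would invoke Lemma~\ref{trivialization}: the $G$-equivariant $*$-isomorphism $T_\alpha\colon(C^\red(G)\boxtimes A,\Delta\boxtimes\alpha)\to(C^\red(G)\otimes A,\Delta\otimes\id)$ satisfies $T_\alpha\circ\iota_\alpha=\alpha$, where $\alpha$ is now read as a $*$-homomorphism $(A,\alpha)\to(C^\red(G)\otimes A,\Delta\otimes\id)$, which is $G$-equivariant precisely by the coassociativity identity $(\Delta\otimes\id)\circ\alpha=(\id\otimes\alpha)\circ\alpha$. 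Applying the functorial and $\check G$-equivariant reduced crossed product, $G\ltimes_\red T_\alpha$ becomes a $\check G$-equivariant $*$-isomorphism carrying $G\ltimes\iota_\alpha$ onto $G\ltimes_\red\alpha$. Hence it suffices to construct a $\check G$-equivariant $*$-isomorphism
\[
\Theta\colon\big(G\ltimes_{\Delta\otimes\id,\red}(C^\red(G)\otimes A),(\Delta\otimes\id)\check{\phantom{a}}\big)\longrightarrow\big(\check G\ltimes_{\check\alpha,\red}(G\ltimes_{\alpha,\red}A),\ad(W_{\check\alpha}^*)\big)
\]
with $\Theta\circ(G\ltimes_\red\alpha)=j_{\check\alpha}$, and then put $\Psi_\alpha=\Theta\circ(G\ltimes_\red T_\alpha)$.

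To construct $\Theta$ I would write out both crossed products concretely. Using Definition~\ref{crossed product} and $\Delta(f)=W^*(\eins\otimes f)W$ one gets
\[
G\ltimes_{\Delta\otimes\id,\red}(C^\red(G)\otimes A)=\big[(C^*_\red(G)\otimes\eins\otimes\eins)\,W_{12}^*(\eins\otimes C^\red(G)\otimes\eins)W_{12}\,(\eins\otimes\eins\otimes A)\big]
\]
inside $M(\IK(L^2(G))\otimes C^\red(G)\otimes A)$, while, with $U=J\hat J$ as in the proof of Theorem~\ref{TTduality} and recalling $C^\red_0(G)=C^\red(G)$ and $\hat\Delta^\cop=\check\Delta$,
\[
\check G\ltimes_{\check\alpha,\red}(G\ltimes_{\alpha,\red}A)=\big[(UC^\red(G)U\otimes\eins\otimes\eins)\,(\check\Delta(C^*_\red(G))\otimes\eins)\,\alpha(A)_{23}\big]
\]
inside $M(\IK(L^2(G))\otimes\IK(L^2(G))\otimes A)$. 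Both of these are carried onto $\IK(L^2(G))\otimes A$ by formally identical chains of reductions: conjugation by a leg-numbered copy of $W$ together with a flip (using $W(z\otimes\eins)W^*=\check\Delta(z)$ for $z\in C^*_\red(G)$, equivalent to $\hat W=\Sigma W^*\Sigma$, and $W_{12}^*\alpha(A)_{23}W_{12}=(\id\otimes\alpha)\alpha(A)$), then collapsing $(\id\otimes\alpha)\alpha$ back to $\alpha$ by injectivity of $\alpha$, and finally the identification $[UC^\red(G)UC^*_\red(G)]=\IK(L^2(G))$ combined with the density condition $[(C^\red(G)\otimes\eins)\alpha(A)]=C^\red(G)\otimes A$ --- this is exactly the computation in the proof of Theorem~\ref{TTduality}, carried out once for the regular $G$-algebra $C^\red(G)$ with the free $A$-leg carried along. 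Composing the first chain with the inverse of the second yields $\Theta$; that every link is an isometric $*$-isomorphism of \cstar-algebras rests on the pentagon equation for $W$, on the commutation of $W_{12}$ with $(L^\infty(G)')_1$, and on the density statements used in that proof.

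It then remains to verify two things. For commutativity one uses that $G\ltimes_\red\alpha$ is $\id_{\IK(L^2(G))}\otimes\alpha$ at the level of multiplier algebras, so a generator $(z\otimes\eins)\alpha(a)$ of $G\ltimes_{\alpha,\red}A$ goes to $(z\otimes\eins\otimes\eins)(\Delta\otimes\id)(\alpha(a))$; chasing this element through the chain defining $\Theta$ returns $\check\alpha\big((z\otimes\eins)\alpha(a)\big)$, which by the defining property of $j_{\check\alpha}$ (and of $W_{\check\alpha}$, via $W_{\check\alpha}^*(\eins\otimes x)W_{\check\alpha}=\check\alpha(x)$) equals $j_{\check\alpha}\big((z\otimes\eins)\alpha(a)\big)$. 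For equivariance one must show that $\Theta$ intertwines the dual action $(\Delta\otimes\id)\check{\phantom{a}}$ --- given by the comultiplication $\check\Delta$ on the copy of $C^*_\red(G)=C^\red_0(\check G)$ and by the identity on $C^\red(G)\otimes A$ --- with $\ad(W_{\check\alpha}^*)$; using the pentagon equation and the explicit form of $\Theta$ this reduces to checking the equality separately on the copies of $C^*_\red(G)$ and of $A$ inside the relevant multiplier algebras.

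The main obstacle is precisely this last equivariance check: the two $\check G$-actions being compared are of quite different nature --- a dual action on the reduced crossed product of a braided tensor product on one side, and the inner action implemented by the corepresentation $W_{\check\alpha}$ on the other --- so matching them up, while only a finite computation once all tensor legs are named, is where care is needed and where the argument genuinely goes beyond plain Takesaki--Takai duality; everything else amounts to leg-numbering bookkeeping and to appeals to the pentagon equation and to the functoriality of the reduced crossed product established in the preceding sections.
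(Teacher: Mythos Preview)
Your proposal is correct and follows essentially the same route as the paper: first apply the trivialisation $T_\alpha$ to replace $C^\red(G)\boxtimes A$ by $C^\red(G)\otimes A$, then identify both sides with $\IK(L^2(G))\otimes A$ via the chain of reductions from the proof of Theorem~\ref{TTduality}, and finally verify that the canonical copies of $C^*_\red(G)$ and $A$ match up under the resulting isomorphism. The one point where the paper proceeds slightly differently is the equivariance check: rather than verifying it separately on the generators $C^*_\red(G)$ and $A$, the paper identifies \emph{both} $\check G$-actions with a single concrete conjugation action on the intermediate algebra $\IK(L^2(G))\otimes A$, namely $T\mapsto\ad(\Sigma_{12}\hat V_{12}\Sigma_{12})(\eins\otimes T)$, using the relation $\Sigma\hat V\Sigma=(\eins\otimes U)\hat W(\eins\otimes U)$; this makes the equivariance a single identification rather than two separate computations, and sidesteps the bookkeeping you flag as the main obstacle.
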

\begin{proof}
Using Lemma \ref{trivialization} and Theorem \ref{TTduality}, we obtain $\Psi_\alpha$ as the composition of the following identifications: 
\[
\begin{array}{ccl}
G \ltimes_{\Delta\boxtimes\alpha, \red} (C^\red(G) \boxtimes A) &\cong& G \ltimes_{\Delta\otimes\id_A, \red} (C^\red(G) \otimes A) \\
&\cong & [(C_\red^*(G) \otimes \eins \otimes \eins) \Delta(C^\red(G))_{12} (\eins \otimes \eins \otimes A)] \\ 
&\cong & [C_\red^*(G) C^\red(G) \otimes A] \\
&=& \IK(L^2(G)) \otimes A \\ 
&=& [(UC^\red(G)U C_\red^*(G) \otimes \eins) \alpha(A)] \\
&\cong & \check{G} \ltimes_{\check{\alpha}, \red} G \ltimes_{\alpha, \red} A.
\end{array}
\]
Note that the copy of $ A $ inside $ M(G \ltimes_{\Delta \boxtimes \alpha, \red} (C^\red(G) \boxtimes A)) $ identifies 
with $ \alpha(A) \subset M(\IK(L^2(G)) \otimes A) $, 
and that the same holds for the copy of $ A $ inside $ M(\check{G} \ltimes_{\check{\alpha}, \red} G \ltimes_{\alpha, \red} A) $. 
Similary, the copies of $ C_\red^*(G) $ on both sides identify with $ C_\red^*(G) \otimes \eins \subset M(\IK(L^2(G)) \otimes A) $. 

Moreover, the above identifications are compatible with the action of $ \check{G} $ on $ \IK(L^2(G)) \otimes A $ implemented by 
conjugation with $ \Sigma \hat{V} \Sigma $. More precisely, the coaction 
\[ 
T \mapsto \ad(\Sigma_{12} \hat{V}_{12} \Sigma_{12})(\eins \otimes T)
\] 
on $\IK(L^2(G)) \otimes A$ corresponds to the dual coaction 
on $ G \ltimes_{\Delta \boxtimes \alpha, \red} (C^\red(G) \boxtimes A) $ 
and to the conjugation coaction $ \gamma = \ad(W_{\check{\alpha}}^*): \check{G} \ltimes_{\hat{\alpha},\red} G \ltimes_{\alpha, \red} A 
\to M(C_\red^*(G)^\cop \otimes \check{G} \ltimes_{\check{\alpha}, \red} G \ltimes_{\alpha, \red} A) $, 
given by $ \gamma(T) = \hat{W}^U_{12}(\eins \otimes T) (\hat{W}^U)^*_{12} $ where 
$ W_{\check{\alpha}}^* = \hat{W}^U = (\eins \otimes U)\hat{W}(\eins \otimes U) $. 
For the latter observe $ \Sigma \hat{V} \Sigma = (\eins \otimes U)\hat{W}(\eins \otimes U) $ and take into account the passage 
from $ C_\red^*(G) $ to $ C_\red^*(G)^\cop $.
\end{proof}

\begin{prop} \label{aux iso Phi_beta}
Let $G$ be a discrete quantum group and $ (A, \alpha) $ a separable $G$-\cstar-algebra. Consider the $G$-equivariant inclusion
\[
j_\alpha: (A,\alpha) \to \big(G\ltimes_{\alpha,\red} A, \ad(W_\alpha^*) \big).
\]
Then there exists a $\check{G}$-equivariant $*$-isomorphism
\[
\Phi_\alpha: \big(G \ltimes_{\ad(W_\alpha^*),\red} (G \ltimes_{\alpha, \red} A), \ad(W_\alpha^*)\check{\phantom{a}} \big) 
\to \big(C^\red(\check{G})\boxtimes (G \ltimes_{\alpha,\red} A), \check{\Delta}\boxtimes\check{\alpha} \big)
\]
that makes the following diagram commutative: 
\[
\xymatrix{
(G \ltimes_{\alpha,\red} A, \check{\alpha}) \ar[rr]^{\hspace{-15mm} G \ltimes_\red j_\alpha} \ar[rrd]_{\iota_{\check{\alpha}}} 
&& \big(G\ltimes_{\ad(W_\alpha^*), \red} (G \ltimes_{\alpha, \red} A) , \ad(W_\alpha^*)\check{\phantom{a}} \big) \ar[d]^{\Phi_\alpha}  \\
&& \big( C^\red(\check{G})\boxtimes (G \ltimes_{\alpha, \red} A) , \check{\Delta}\boxtimes\check{\alpha} \big)
}
\]
\end{prop}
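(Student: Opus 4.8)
The plan is to imitate the proof of Proposition \ref{aux iso Psi_alpha}: I would construct $\Phi_\alpha$ as a composition of canonical identifications and then read off the $\check G$-equivariance and the commutativity of the triangle from that construction. Since $G$ is discrete, $C^\red_0(\check G)=C^*_\red(G)$ is unital, so $\check G$ is a compact --- hence regular --- quantum group, and the braided tensor product $C^\red(\check G)\boxtimes(G\ltimes_{\alpha,\red}A)$ is formed using the Yetter--Drinfeld structure of Example \ref{exyd}.

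The key point is that $W_\alpha=(\id\otimes\iota_G)(W)\in M\big(C^\red_0(G)\otimes(G\ltimes_{\alpha,\red}A)\big)$ satisfies the corepresentation identity $(\Delta\otimes\id)(W_\alpha)=(W_\alpha)_{13}(W_\alpha)_{23}$, so that, viewed as a unitary multiplier of $\IK(L^2(G))\otimes(G\ltimes_{\alpha,\red}A)$, conjugation by $W_\alpha$ carries the coaction $\ad(W_\alpha^*)$ to the trivial $G$-coaction on $G\ltimes_{\alpha,\red}A$. Passing to crossed products, this exhibits a $*$-isomorphism
\[
G\ltimes_{\ad(W_\alpha^*),\red}(G\ltimes_{\alpha,\red}A)\ \xrightarrow{\ \cong\ }\ G\ltimes_{\mathrm{triv},\red}(G\ltimes_{\alpha,\red}A)=C^\red(\check G)\otimes(G\ltimes_{\alpha,\red}A),
\]
where the last equality is immediate from Definition \ref{crossed product}, and where the fact that conjugation by $W_\alpha$ does land in $C^\red(\check G)\otimes(G\ltimes_{\alpha,\red}A)$ uses $W_\alpha(C^*_\red(G)\otimes\eins)W_\alpha^*=\check\Delta(C^*_\red(G))$ together with the Hopf \cstar-algebra density condition $[\check\Delta(C^*_\red(G))(\eins\otimes C^*_\red(G))]=C^*_\red(G)\otimes C^*_\red(G)$ for $(C^\red(\check G),\check\Delta)$. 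Composing with the inverse of the trivialization isomorphism from Lemma \ref{trivialization}, applied to the compact quantum group $\check G$ and the $\check G$-\cstar-algebra $(G\ltimes_{\alpha,\red}A,\check\alpha)$ --- a $\check G$-equivariant isomorphism $T_{\check\alpha}\colon(C^\red(\check G)\boxtimes(G\ltimes_{\alpha,\red}A),\check\Delta\boxtimes\check\alpha)\to(C^\red(\check G)\otimes(G\ltimes_{\alpha,\red}A),\check\Delta\otimes\id)$ with $T_{\check\alpha}(\eins\boxtimes x)=\check\alpha(x)$ --- then yields the desired $\Phi_\alpha$.

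For the commutative triangle I would check $\Phi_\alpha\circ(G\ltimes_\red j_\alpha)=\iota_{\check\alpha}$ on the two natural generating sets of $G\ltimes_{\alpha,\red}A$ separately, namely on the copy $\alpha(A)$ of $A$ and on the copy $\iota_G(C^*_\red(G))$ of $C^\red_0(\check G)$. On the former one uses $\ad(W_\alpha^*)(a)=\alpha(a)$ and functoriality of crossed products to get $\alpha(a)\mapsto\eins\boxtimes\alpha(a)$; on the latter one uses $W(y\otimes\eins)W^*=\check\Delta(y)$ to get $\iota_G(y)\mapsto\eins\boxtimes\iota_G(y)$. Both agree with $\iota_{\check\alpha}$ because the dual $\check G$-coaction $\check\alpha$ on $G\ltimes_{\alpha,\red}A$ is trivial on $\alpha(A)$ and equals the comultiplication on $\iota_G(C^*_\red(G))$, which is exactly how $T_{\check\alpha}$ moves $\eins\boxtimes(-)$. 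The main obstacle, precisely as in the last paragraph of the proof of Proposition \ref{aux iso Psi_alpha}, is verifying that conjugation by $W_\alpha$ intertwines the dual $\check G$-coactions, i.e.\ carries $\ad(W_\alpha^*)\check{\phantom{a}}$ to $\check\Delta\otimes\id$; this combines the corepresentation identity for $W_\alpha$ with the explicit form of the dual coaction on a reduced crossed product and the relations among $W$, $\hat\Delta$ and $\check\Delta=\hat\Delta^\cop$ recorded in Remarks \ref{Wremark} and \ref{right reg repr}, and requires careful bookkeeping of the various flip maps.
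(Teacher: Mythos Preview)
Your proposal is correct and follows essentially the same approach as the paper. The paper writes out the chain of identifications explicitly---conjugating by $W_{12}$ (which is your $W_\alpha$ once $G\ltimes_{\alpha,\red}A$ is realized inside $M(\IK(L^2(G))\otimes A)$), using $W(y\otimes\eins)W^*=\hat\Delta^{\cop}(y)=\check\Delta(y)$ and the density condition to land in $C^\red(\check G)\otimes(G\ltimes_{\alpha,\red}A)$, and then invoking Lemma~\ref{trivialization}---whereas you phrase the same steps more conceptually; the underlying maps, the verification of the triangle on the generators $\alpha(A)$ and $\iota_G(C^*_\red(G))$, and the check of $\check G$-equivariance against $\check\Delta\otimes\id$ are identical in substance.
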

\begin{proof}
We obtain $\Phi_\alpha$ as the composition of the following identifications: 
\[
\begin{array}{ccl}
G \ltimes_{\ad(W_\alpha^*)} (G \ltimes_{\alpha, \red} A) &=&
[(C_\red^*(G) \otimes \eins \otimes \eins) W^*_{12} (\eins \otimes C_\red^*(G) \otimes \eins) \alpha(A)_{23} W_{12}] \\
&=& [W_{12}^* (\hat{\Delta}^\cop(C_\red^*(G)) \otimes \eins) (\eins \otimes C_\red^*(G) \otimes \eins) \alpha(A)_{23} W_{12}] \\ 
&\cong& [(\hat{\Delta}^\cop(C_\red^*(G)) \otimes \eins) (\eins \otimes C_\red^*(G) \otimes \eins) \alpha(A)_{23} ] \\ 
&=& [(C_\red^*(G) \otimes C_\red^*(G) \otimes \eins) \alpha(A)_{23}] \\ 
&\cong& C^*_\red(G)^{\cop} \otimes (G \ltimes_{\alpha, \red} A) \\ 
&=& C^\red(\check{G}) \otimes (G \ltimes_{\alpha, \red} A) \\ 
&\cong& C^\red(\check{G}) \boxtimes (G \ltimes_{\alpha, \red} A).  
\end{array}
\]
Under these identifications, the copy of $ C_\red^*(G) $ inside $ M(G \ltimes_{\alpha, \red} A) $ on the left hand side gets 
identified with $\eins \boxtimes (C^*_\red(G) \otimes \eins) $ 
inside $ C^\red(\check{G}) \boxtimes (G \ltimes_{\alpha, \red} A) $, and the copy of $ A $ in $ G \ltimes_{\alpha, \red} A $
is mapped to $ \eins \boxtimes \alpha(A) $. 
In other words, we indeed obtain a commutative diagram as desired.

Moreover, it is not hard to check that the dual action on $ G \ltimes_{\ad(W_\alpha^*), \red} (G \ltimes_{\alpha, \red} A) $ 
corresponds to the action of $ \hat{\Delta}^\cop = \check{\Delta} $ 
on the first tensor factor of $ C^\red(\check{G}) \otimes (G \ltimes_{\alpha, \red} A) $. 
It follows that $\Phi_\alpha$ is $ \check{G} $-equivariant.
\end{proof}

As a consequence, we obtain the duality between the spatial Rokhlin property and spatial approximate representability. 

\begin{theorem} \label{duality rp ar}
Let $G$ be a coexact compact quantum group and let $(A, \alpha)$ be a separable $G$-\cstar-algebra. 
Then $ \alpha $ has the spatial Rokhlin property if and only if $\check{\alpha}$ is spatially approximately representable. 

Dually, let $ G $ be an exact discrete quantum group and let $ (A, \alpha) $ be a separable $G$-\cstar-algebra. 
Then $ \alpha $ is spatially approximately representable if and only if $\check{\alpha}$ has the spatial Rokhlin property. 
\end{theorem}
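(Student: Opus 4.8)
The plan is to deduce the theorem formally from three already-established ingredients: the general duality for equivariantly sequentially split $*$-homomorphisms (Proposition \ref{general-duality}) and the two structural identifications in Propositions \ref{aux iso Psi_alpha} and \ref{aux iso Phi_beta}. Both halves of the statement follow by the same mechanism, so I would treat them in parallel.

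For the first half, let $G$ be coexact compact. By Definition \ref{def:rp}, $\alpha$ has the spatial Rokhlin property exactly when the nondegenerate $G$-equivariant second-factor embedding $\iota_\alpha\colon(A,\alpha)\to(C^\red(G)\boxtimes A,\Delta\boxtimes\alpha)$ is $G$-equivariantly sequentially split. Since $A$ and $C^\red(G)\boxtimes A$ are separable and $\iota_\alpha$ is nondegenerate, Proposition \ref{general-duality} shows this is equivalent to $G\ltimes_\red\iota_\alpha$ being $\check G$-equivariantly sequentially split. I would then feed in Proposition \ref{aux iso Psi_alpha}: its commutative triangle says $j_{\check\alpha}=\Psi_\alpha\circ(G\ltimes_\red\iota_\alpha)$ with $\Psi_\alpha$ a $\check G$-equivariant $*$-isomorphism, so $G\ltimes_\red\iota_\alpha$ is $\check G$-equivariantly sequentially split if and only if $j_{\check\alpha}$ is. Since $\check G$ is exact discrete (being dual to a coexact compact quantum group), the latter is by Definition \ref{def:ar} precisely the assertion that $\check\alpha$ is spatially approximately representable. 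Chaining these equivalences proves the first half.

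For the second half, let $G$ be exact discrete, so that $\check G$ is coexact compact. Now $\alpha$ is spatially approximately representable exactly when the nondegenerate $G$-equivariant inclusion $j_\alpha\colon(A,\alpha)\to(G\ltimes_{\alpha,\red}A,\ad(W_\alpha^*))$ is $G$-equivariantly sequentially split; by Proposition \ref{general-duality} this is equivalent to $G\ltimes_\red j_\alpha$ being $\check G$-equivariantly sequentially split; and by the commutative triangle of Proposition \ref{aux iso Phi_beta}, $\iota_{\check\alpha}=\Phi_\alpha\circ(G\ltimes_\red j_\alpha)$ with $\Phi_\alpha$ a $\check G$-equivariant $*$-isomorphism, so this is in turn equivalent to $\iota_{\check\alpha}$ being $\check G$-equivariantly sequentially split, i.e., by Definition \ref{def:rp} to $\check\alpha$ having the spatial Rokhlin property.

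The substantive content of the theorem is entirely in the inputs --- the equivariance and commutativity checks in Propositions \ref{aux iso Psi_alpha} and \ref{aux iso Phi_beta}, and the Takesaki-Takai argument behind Proposition \ref{general-duality}. The only things I expect to need care in assembling the final proof are bookkeeping: confirming nondegeneracy of $\iota_\alpha$ and $j_\alpha$ (so that Proposition \ref{general-duality} applies), that separability is preserved throughout the constructions, and that at each stage the quantum group in play is of the correct type (compact coexact versus discrete exact) for the cited results. I do not anticipate a genuine obstacle beyond this.
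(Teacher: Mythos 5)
Your proposal is correct and follows essentially the same route as the paper's own proof: apply Proposition \ref{general-duality} to $\iota_\alpha$ (resp.\ $j_\alpha$) and then use the commutative triangles of Propositions \ref{aux iso Psi_alpha} and \ref{aux iso Phi_beta} to identify the induced map on crossed products with $j_{\check\alpha}$ (resp.\ $\iota_{\check\alpha}$) up to an equivariant isomorphism. The bookkeeping points you flag (nondegeneracy, separability, which duality class $\check G$ lies in) are indeed the only things to check, and they all go through.
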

\begin{proof}
Let us first consider the case of compact quantum groups. By the general duality result from 
Proposition \ref{general-duality}, we know that
\[
\iota_\alpha: (A,\alpha)\into \big(C^\red(G)\boxtimes A,\Delta \boxtimes \alpha \big)
\]
is $G$-equivariantly sequentially split if and only if the induced $*$-homomorphism 
\[
G\ltimes_\red \iota_\alpha: (G \ltimes_{\alpha,\red} A, \check{\alpha}) \to 
\big(G \ltimes_{\Delta\boxtimes\alpha, \red} (C(G)\boxtimes A), (\Delta\boxtimes\alpha)\check{\phantom{a}} \big) 
\]
is $\check{G}$-equivariantly sequentially split. By Proposition \ref{aux iso Psi_alpha}, there exists a commutative diagram 
of $\check{G}$-equivariant $*$-homomorphisms
\[
\xymatrix{
(G \ltimes_{\alpha, \red} A, \check{\alpha}) \ar[rr]^{\hspace{-15mm}G\ltimes_\red \iota_\alpha} \ar[rrd]_{j_{\check{\alpha}}} && 
\big(G \ltimes_{\Delta\boxtimes\alpha, \red} (C(G) \boxtimes A), (\Delta\boxtimes\alpha)\check{\phantom{a}} \big) \ar[d]^\cong \\
&&  \big(\check{G} \ltimes_{\check{\alpha}, \red} (G \ltimes_{\alpha, \red} A ), \ad(W_{\check{\alpha}}^*)  \big)
}
\]
We conclude that $\iota_\alpha$ is $G$-equivariantly sequentially split if and only if $j_{\check{\alpha}}$ is $\check{G}$-equivariantly sequentially split. This means that $\alpha$ has the spatial Rokhlin property if and only if $\check{\alpha}$ is spatially 
approximately representable. 

The claim in the discrete case is proved in an analogous fashion. Again by Proposition \ref{general-duality}, 
the $G$-equivariant $*$-homomorphism
\[
j_\alpha: (A,\alpha) \to \big(G \ltimes_{\alpha, \red} A, \ad(W_\alpha^*) \big)
\]
is $G$-equivariantly sequentially split if and only if the induced $*$-homomorphism
\[
G \ltimes_\red j_\alpha:(G \ltimes_{\alpha, \red} A, \check{\alpha}) \to 
\big(G \ltimes_{\ad(W_\alpha^*), \red} (G \ltimes_{\alpha, \red} A), \ad(W_\alpha^*)\check{\phantom{a}} 
\big)
\]
is $\check{G}$-equivariantly sequentially split. By Proposition \ref{aux iso Phi_beta}, there exists a commutative diagram 
of $\check{G}$-equivariant $*$-homomorphisms
\[
\xymatrix{
(G \ltimes_{\alpha, \red} A, \check{\alpha}) \ar[rr]^{\hspace{-15mm} G \ltimes_\red j_\alpha} \ar[rrd]_{\iota_{\check{\alpha}}} && 
\big(G \ltimes_{\ad(W_\alpha^*), \red} (G \ltimes_{\alpha, \red} A) , \ad(W_\alpha^*)\check{\phantom{a}} \big) \ar[d]^\cong  \\
&& \big( C^\red(\check{G})\boxtimes (G \ltimes_{\alpha, \red} A) , \check{\Delta}\boxtimes\check{\alpha} \big)
}
\]
We conclude that $j_\alpha$ is $G$-equivariantly sequentially split if and only if $\iota_{\check{\alpha}}$ is $\check{G}$-equivariantly sequentially split. Hence $\alpha$ is spatially approximately representable if and only if $\check{\alpha}$ has the spatial Rokhlin property. 
\end{proof}


\section{Rigidity of Rokhlin actions}

In this section we provide a classification of actions of coexact compact quantum groups with the spatial Rokhlin property on 
separable \cstar-algebras. This type of result was first obtained by Izumi in \cite{Izumi04}. Our basic approach follows 
Gardella-Santiago \cite[Section 3]{GardellaSantiago15}, who proved corresponding results for finite group actions. We note 
that Gardella-Santiago have also announced the results of this section for actions of classical compact groups, see \cite{GardellaSantiago16}.

Recall that if $ (B, \beta) $ is a $G$-\cstar-algebra for a compact quantum group $ G $ 
then $ B^\beta \subset B $ denotes the fixed point subalgebra. We shall also write $ \beta $ for the induced coaction 
on the minimal unitarization $ \tilde{B} $ of $B$; note that $ \beta(\eins) = \eins \otimes \eins $. 
\begin{defi}
Let $G$ be a compact quantum group. Let $\alpha: A\to C^\red(G)\otimes A$ and $\beta: B\to C^\red(G)\otimes B$ be two $G$-actions 
on \cstar-algebras, and assume that $A$ is separable. Let $\phi_1, \phi_2: (A,\alpha)\to (B,\beta)$ be two equivariant $*$-homomorphisms. We say that $\phi_1$ and $\phi_2$ are approximately $G$-unitarily equivalent, written $\phi_1\ue{G}\phi_2$, if there exists a sequence of unitaries $v_n\in\CU(\tilde{B}^\beta)$ such that
\[
\phi_2(x)=\lim_{n\to\infty} v_n\phi_1(x)v_n^* \quad\text{for all}~x\in A.
\]
\end{defi}

\begin{rem}
For the trivial (quantum) group $G$, the above definition recovers the usual notion of approximate unitary equivalence between $*$-homomorphisms. We write simply $\phi_1\ueo \phi_2$ instead of $\phi_1\ue{G}\phi_2$ in this case. 
\end{rem}

\begin{prop} \label{intertwining}
Let $G$ be a compact quantum group. Let $\alpha: A\to C^\red(G)\otimes A$ and $\beta: B\to C^\red(G)\otimes B$ be two $G$-actions on separable \cstar-algebras. Let $\phi: (A,\alpha)\to (B,\beta)$ and $\psi: (B,\beta)\to (A,\alpha)$ be two equivariant $*$-homomorphisms such that $\psi\circ\phi\ue{G}\id_A$ and $\phi\circ\psi\ue{G}\id_B$. Then there exists an equivariant $ * $-isomorphism $\Phi: (A,\alpha)\to (B,\beta)$ such that $\Phi\ue{G}\phi$ and $\Phi^{-1}\ue{G}\psi$.
\end{prop}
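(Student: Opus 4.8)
The statement is an equivariant version of the classical Elliott intertwining argument, so the plan is to adapt the standard two-sided approximate intertwining to the equivariant setting, where the unitaries are taken from $\CU(\tilde B^\beta)$ and $\CU(\tilde A^\alpha)$. First I would fix a dense sequence $(a_k)_{k\in\IN}$ in $A$ and a dense sequence $(b_k)_{k\in\IN}$ in $B$, and a summable sequence of tolerances $\eps_n > 0$. Using the hypotheses $\psi\circ\phi\ue{G}\id_A$ and $\phi\circ\psi\ue{G}\id_B$, together with the fact that conjugating by a fixed unitary in $\tilde B^\beta$ (resp.\ $\tilde A^\alpha$) preserves equivariance, I would inductively construct unitaries $v_n\in\CU(\tilde B^\beta)$ and $w_n\in\CU(\tilde A^\alpha)$ so that the maps
\[
\phi_n = \ad(v_n\cdots v_1)\circ\phi, \qquad \psi_n = \ad(w_n\cdots w_1)\circ\psi
\]
satisfy the usual approximate-intertwining estimates: $\|\psi_n\phi_n(a_k) - a_k\| < \eps_n$ for $k\le n$, $\|\phi_n\psi_n(b_k) - b_k\| < \eps_n$ for $k\le n$, together with compatibility conditions ensuring $(\phi_n)$ and $(\psi_n)$ are Cauchy on the respective dense sequences. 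Each step is exactly as in the classical proof (e.g.\ \cite[Section 3]{GardellaSantiago15}), the only new point being to keep track that all conjugating unitaries lie in the fixed-point algebras of the unitarizations, which is possible because $\psi_n\circ\phi_n$ and $\id_A$ are both equivariant and approximately $G$-unitarily equivalent by construction.

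The key steps, in order, are: (1) set up the bookkeeping data (dense sequences, tolerances); (2) run the inductive construction of $v_n, w_n\in\CU$ of the fixed-point algebras of the unitarizations, alternately forcing $\psi_n\phi_n$ close to $\id_A$ and $\phi_n\psi_n$ close to $\id_B$ on longer and longer finite sets, while simultaneously controlling $\|\phi_{n+1}(a_k)-\phi_n(a_k)\|$ and $\|\psi_{n+1}(b_k)-\psi_n(b_k)\|$ by small amounts so as to guarantee convergence; (3) define $\Phi = \lim_n \phi_n$ and $\Psi = \lim_n \psi_n$ as pointwise limits of $*$-homomorphisms, which are again $*$-homomorphisms, and check they are mutually inverse from the intertwining estimates, hence $\Phi$ is a $*$-isomorphism; (4) verify $\Phi$ is $G$-equivariant, which follows since each $\phi_n$ is equivariant and equivariance is preserved under pointwise limits (the coactions $\alpha$, $\beta$ are continuous $*$-homomorphisms, so one may pass to the limit in $\beta\circ\phi_n = (\id\otimes\phi_n)\circ\alpha$); and (5) observe that $\Phi\ue{G}\phi$ and $\Phi^{-1}\ue{G}\psi$ by construction, since $\Phi$ is obtained from $\phi$ by successive conjugations by unitaries in $\tilde B^\beta$ and the partial products $v_n\cdots v_1$ themselves realize the approximate $G$-unitary equivalence.

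The main obstacle — and really the only nonroutine point — is step (2): one must arrange that at each stage the unitary correcting $\psi_n\circ\phi_n$ toward $\id_A$ can be chosen in $\CU(\tilde A^\alpha)$ rather than just in $\CU(\tilde A)$, and likewise on the $B$ side, \emph{and} that this correction does not spoil the estimates already achieved on the other side. The former is guaranteed by the hypothesis $\psi\circ\phi\ue{G}\id_A$ (which a priori gives unitaries in $\tilde A^\alpha$) together with the observation that after conjugating $\phi$ and $\psi$ by fixed-point unitaries the relations $\psi_n\circ\phi_n\ue{G}\id_A$ and $\phi_n\circ\psi_n\ue{G}\id_B$ persist; the latter is the usual interleaving trick, choosing the new approximate-unitary-equivalence witness close enough to $1$ on the relevant finite set that previously established inequalities degrade by at most $\eps_n$. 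Everything else is the standard Elliott intertwining machinery, carried out verbatim inside the fixed-point algebras of the unitarizations.
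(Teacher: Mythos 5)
Your proposal is correct and takes essentially the same approach as the paper: the paper's proof consists of citing the two-sided approximate intertwining argument of R{\o}rdam (Corollary~2.3.4 of his classification monograph) and observing that all conjugating unitaries may be taken in $\CU(\tilde{B}^\beta)$ and $\CU(\tilde{A}^\alpha)$, so that equivariance and the $G$-unitary equivalences $\Phi\ue{G}\phi$, $\Phi^{-1}\ue{G}\psi$ survive the passage to the limit. These are exactly the points you identify and elaborate, including the correct justification that the relations $\psi_n\circ\phi_n\ue{G}\id_A$ and $\phi_n\circ\psi_n\ue{G}\id_B$ persist under conjugation by fixed-point unitaries.
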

\begin{proof}
This follows from a straightforward adaptation of the proof of \cite[Corollary~2.3.4]{Rordam2001} to the setting of $G$-equivariant $*$-homomorphism. For this, one requires the approximate intertwinings from \cite[Definition~2.3.1]{Rordam2001} to be (approximately) $G$-equivariant in the obvious way. The resulting $*$-isomorphism $\Phi: A\to B$ then turns out to be $\alpha$-to-$\beta$-equivariant. 
Moreover, the approximate unitary equivalences $\Phi\ueo \phi$ and $\Phi^{-1}\ueo\psi$ that come out of the proof are indeed implemented by unitaries in $\CU(\tilde{B}^\beta)$ and $\CU(\tilde{A}^\alpha)$, respectively.
\end{proof}

Let us now consider a series of partial results that will lead to the classification of Rokhlin actions.

\begin{lemma} \label{pre-existence-1}
Let $G$ be a compact quantum group. Let $\alpha: A\to C^\red(G)\otimes A$ and $\beta: B\to C^\red(G)\otimes B$ be two $G$-actions on separable \cstar-algebras. Let $\phi: A\to B$ be a $*$-homomorphism that is equivariant modulo approximate unitary equivalence, i.e.\ $\beta\circ\phi \ueo (\id\otimes\phi)\circ\alpha$ as $*$-homomorphisms between $A$ and $C^\red(G)\otimes B$. Then for every finite set $F\fin A$ and 
every $\eps>0$, there exists a unitary $v\in (C^\red(G)\boxtimes B)^\sim$ such that
\[
(\Delta\boxtimes\beta)\circ \ad(v)\circ(\eins\boxtimes\phi)(x) =_\eps \big( \id\otimes (\ad(v)\circ(\eins\boxtimes\phi)) \big)\circ\alpha(x) 
\]
and
\[
\| [(\eins\boxtimes \phi)(x), v] \| \leq \eps+\|\beta\circ\phi(x)-(\id\otimes\phi)\circ\alpha(x)\|
\]
for all $x\in F$.
\end{lemma}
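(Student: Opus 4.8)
The plan is to transport the whole statement to an ordinary tensor product. By Lemma~\ref{trivialization} there is a $G$-equivariant $*$-isomorphism $T_\beta\colon(C^\red(G)\boxtimes B,\Delta\boxtimes\beta)\to(C^\red(G)\otimes B,\Delta\otimes\id)$ with $T_\beta(\eins\boxtimes b)=\beta(b)$, so that $T_\beta\circ(\eins\boxtimes\phi)=\beta\circ\phi=:\Phi$. Extending $T_\beta$ to the minimal unitarizations, where it is an isometric $*$-isomorphism carrying unitaries to unitaries with $T_\beta\circ\ad(v)=\ad(T_\beta(v))\circ T_\beta$, it suffices to find a unitary $w\in(C^\red(G)\otimes B)^\sim$ so that, writing $\theta:=\ad(w)\circ\Phi$, one has $(\Delta\otimes\id)\circ\theta(x)=_\eps(\id\otimes\theta)\circ\alpha(x)$ and $\|[\Phi(x),w]\|\leq\eps+\|\beta\phi(x)-(\id\otimes\phi)\alpha(x)\|$ for all $x\in F$; then $v:=T_\beta^{-1}(w)$ satisfies the two asserted relations, since applying the isometric isomorphisms $T_\beta$ and $\id\otimes T_\beta$ and using $G$-equivariance of $T_\beta$ turns $\Delta\otimes\id$ back into $\Delta\boxtimes\beta$ and $\ad(w)\circ\Phi$ back into $\ad(v)\circ(\eins\boxtimes\phi)$.

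The key observation is that $M:=(\id\otimes\phi)\circ\alpha\colon A\to C^\red(G)\otimes B$ is \emph{genuinely} $G$-equivariant from $(A,\alpha)$ to $(C^\red(G)\otimes B,\Delta\otimes\id)$, irrespective of whether $\phi$ is equivariant, because $(\Delta\otimes\id)\circ M=(\id\otimes\id\otimes\phi)\circ(\Delta\otimes\id)\circ\alpha=(\id\otimes\id\otimes\phi)\circ(\id\otimes\alpha)\circ\alpha=(\id\otimes M)\circ\alpha$ by coassociativity of $\alpha$. By hypothesis $\Phi\ueo M$, so for every finite set $F'\fin A$ and $\delta>0$ there is a unitary $w\in(C^\red(G)\otimes B)^\sim$ with $\|w\Phi(y)w^*-M(y)\|<\delta$ for all $y\in F'$. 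For such $w$ the commutator bound is immediate from $\|[\Phi(x),w]\|=\|[\Phi(x),w]w^*\|=\|\Phi(x)-w\Phi(x)w^*\|\leq\|\Phi(x)-M(x)\|+\|M(x)-w\Phi(x)w^*\|<\|\beta\phi(x)-(\id\otimes\phi)\alpha(x)\|+\delta$, which is what is wanted once $\delta\leq\eps$. And $\theta=\ad(w)\circ\Phi$ is then within $\delta$ of the equivariant map $M$ on $F'$, so its failure of equivariance on $F$ ought to be controllable.

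Making that last point precise is the only non-formal step, and it dictates the choice of $F'$: the term $(\id\otimes\theta)\circ\alpha(x)$ involves $\alpha(x)$, which is not a finite sum of elementary tensors, so proximity of $\theta$ to $M$ on $F$ alone is not enough. I would fix, for each $x\in F$, a finite sum $\zeta_x=\sum_i c^x_i\otimes a^x_i\in C^\red(G)\odot A$ with $\|\alpha(x)-\zeta_x\|<\eps/4$, let $C$ be the largest of the numbers $\sum_i\|c^x_i\|$ over $x\in F$, and take $F'=F\cup\{a^x_i\}_{x,i}$ with $\delta=\eps/(4(1+C))$. Then for $x\in F$ one estimates, using that $\Delta\otimes\id$ and all amplification maps are contractive $*$-homomorphisms: $\|(\Delta\otimes\id)\theta(x)-(\Delta\otimes\id)M(x)\|<\delta$; $(\Delta\otimes\id)M(x)=(\id\otimes M)\alpha(x)$; $\|(\id\otimes M)\alpha(x)-(\id\otimes M)\zeta_x\|<\eps/4$; $\|(\id\otimes M)\zeta_x-(\id\otimes\theta)\zeta_x\|\leq\sum_i\|c^x_i\|\,\|M(a^x_i)-\theta(a^x_i)\|\leq C\delta$; and $\|(\id\otimes\theta)\zeta_x-(\id\otimes\theta)\alpha(x)\|<\eps/4$. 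Summing, $\|(\Delta\otimes\id)\theta(x)-(\id\otimes\theta)\alpha(x)\|<\delta(1+C)+\eps/2\leq 3\eps/4<\eps$, which is the first required estimate; transporting back through $T_\beta$ as described produces the desired unitary $v$.
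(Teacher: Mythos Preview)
Your proof is correct and follows essentially the same route as the paper: transport to the ordinary tensor product via $T_\beta$, use that $M=(\id\otimes\phi)\circ\alpha$ is genuinely $G$-equivariant, and produce the unitary from the approximate unitary equivalence $\Phi\approx_{\mathrm u} M$. The only difference is presentational: the paper phrases the argument via a sequence $u_n$ with $\ad(u_n)\circ\Phi\to M$ pointwise and shows both sides of the first relation converge to the same limit (implicitly using that pointwise convergence of $*$-homomorphisms passes through $\id\otimes(-)$ by uniform boundedness), whereas you work with a single $w$ and explicitly approximate $\alpha(x)$ by elementary tensors to control the amplified term.
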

\begin{proof}
For convenience, the term $\id$ will always denote the identity map on $C^\red(G)$ in this proof. Identity maps on other sets are decorated with the corresponding set.

Using our assumptions on $\phi$, we may choose unitaries $u_n\in (C^\red(G)\otimes B)^\sim$ such that
\begin{equation} \label{e:pre-ex:1}
\ad(u_n)\circ\beta\circ\phi \quad\stackrel{n\to\infty}{\longrightarrow}\quad (\id\otimes\phi)\circ\alpha
\end{equation}
in point-norm.
By Lemma \ref{trivialization}, we have the equivariant isomorphism
\begin{equation} \label{e:pre-ex:2}
T_\beta: ( C^\red(G)\boxtimes B, \Delta\boxtimes\beta) \to ( C^\red(G)\otimes B, \Delta\otimes\id_B )
\end{equation}
satisfying 
\begin{equation} \label{e:pre-ex:3}
T_\beta(\eins\boxtimes x)=\beta(x)\quad\text{for all}~x\in B.
\end{equation}
Let us also denote by $T_\beta$ the obvious extension to the unitarizations. Set $v_n=T_\beta^{-1}(u_n)$. We calculate 
\[
\begin{array}{cl}
\multicolumn{2}{l}{ \hspace{-5mm} \dst \lim_{n\to\infty}~ (\Delta\boxtimes\beta)\circ \ad(v_n)\circ(\eins\boxtimes\phi) } \\
=& \dst \lim_{n\to\infty}~ (\Delta\boxtimes\beta)\circ \ad(T_\beta^{-1}(u_n))\circ(\eins\boxtimes\phi) \\
\stackrel{\eqref{e:pre-ex:3}}{=}& \dst \lim_{n\to\infty}~ (\Delta\boxtimes\beta)\circ T_\beta^{-1}\circ\ad(u_n)\circ\beta\circ\phi \\ 
\stackrel{\eqref{e:pre-ex:1}}{=} & (\Delta\boxtimes\beta)\circ T_\beta^{-1}\circ(\id\otimes\phi)\circ\alpha \\
\stackrel{\eqref{e:pre-ex:2}}{=} & (\id\otimes T_\beta^{-1})\circ (\Delta\otimes\id_B)\circ (\id\otimes\phi)\circ\alpha \\
=& (\id\otimes T_\beta^{-1})\circ (\id\otimes\id\otimes\phi)\circ(\Delta\otimes\id_A)\circ\alpha \\
=& (\id\otimes T_\beta^{-1})\circ (\id\otimes\id\otimes\phi)\circ (\id\otimes\alpha)\circ\alpha \\
=& \big( \id\otimes (T_\beta^{-1}\circ (\id\otimes\phi)\circ\alpha) \big)\circ\alpha \\
\stackrel{\eqref{e:pre-ex:1}}{=} & \dst \lim_{n\to\infty}~ \big( \id\otimes (T_\beta^{-1}\circ \ad(u_n)\circ\beta\circ\phi ) \big)\circ\alpha \\
=& \dst \lim_{n\to\infty}~ \big( \id\otimes( \ad(v_n)\circ(\eins\boxtimes\phi)) \big)\circ\alpha.
\end{array}
\]
One should note that even though the existence of all these limits is a priori not clear at the beginning of this calculation, it follows a posteriori from the steps in this calculation.

Moreover, we calculate for all $x\in A$ that
\[
\begin{array}{cl}
\multicolumn{2}{l}{ \hspace{-5mm} \|[\eins\boxtimes \phi(x), v_n]\| } \\
=& \|[(\eins\boxtimes\phi)(x), T_\beta^{-1}(u_n)] \| \\
\stackrel{\eqref{e:pre-ex:3}}{=}& \|[(\beta\circ\phi)(x),u_n]\| \\
=& \|(\ad(u_n)\circ\beta\circ\phi)(x)-(\beta\circ\phi)(x)\| \\
\stackrel{\eqref{e:pre-ex:1}}{\longrightarrow} & \|(\id\otimes\phi)\circ\alpha(x)-\beta\circ\phi(x)\|.
\end{array}
\]
From these two calculations, it is clear that for given $F\fin A$ and $\eps>0$, any of the unitaries $v_n$ satisfies the desired property for sufficiently large $n$.
\end{proof}

\begin{lemma} \label{pre-existence-2}
Let $G$ be a coexact compact quantum group. Let $\alpha: A\to C^\red(G)\otimes A$ and $\beta: B\to C^\red(G)\otimes B$ be two $G$-actions on separable \cstar-algebras. Assume that $\beta$ has the spatial Rokhlin property. Let $\phi: A\to B$ be a $*$-homomorphism that is equivariant modulo approximate unitary equivalence, i.e.\ $\beta\circ\phi \ueo (\id\otimes\phi)\circ\alpha$ as $*$-homomorphisms between $A$ and $C^\red(G)\otimes B$. Then for every finite set $F\fin A$ and $\eps>0$, there exists a unitary $v\in \tilde{B}$ such that
\[
\beta\circ \ad(v)\circ\phi(x) =_\eps \big( \id\otimes (\ad(v)\circ\phi) \big)\circ\alpha(x) 
\]
and
\[
\| [\phi(x), v] \| \leq \eps+\|\beta\circ\phi(x)-(\eins\otimes\phi)\circ\alpha(x)\|
\]
for all $x\in F$.
\end{lemma}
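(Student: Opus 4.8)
The plan is to deduce Lemma \ref{pre-existence-2} from Lemma \ref{pre-existence-1} by using the spatial Rokhlin property of $\beta$ to transport a unitary living in the braided tensor product $C^\red(G)\boxtimes B$ down to a unitary in $\tilde B$. Fix $F\fin A$ and $\eps>0$. By Definition \ref{def:rp}, the second-factor embedding $\iota_\beta=\eins\boxtimes\id_B\colon(B,\beta)\to(C^\red(G)\boxtimes B,\Delta\boxtimes\beta)$ is $G$-equivariantly sequentially split, so I would fix an equivariant approximate left-inverse $\psi\colon(C^\red(G)\boxtimes B,\Delta\boxtimes\beta)\to(B_\infty,\beta_\infty)$; thus $\psi\circ\iota_\beta$ is the standard embedding $B\to B_\infty$, and $\beta_\infty\circ\psi=(\id\otimes\psi)\circ(\Delta\boxtimes\beta)$. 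Since $\iota_\beta$ is nondegenerate (as used in the proof of Proposition \ref{char:rp}), so is $\psi$, and I would use the same symbol for its strictly continuous extension to multiplier algebras and for the induced unital $*$-homomorphism $(C^\red(G)\boxtimes B)^\sim\to(B_\infty)^\sim$, noting that the equivariance identity is inherited by these extensions.

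The first step is to apply Lemma \ref{pre-existence-1} with $F$ and $\eps/2$ in place of $\eps$, producing a unitary $w\in(C^\red(G)\boxtimes B)^\sim$ with
\[
(\Delta\boxtimes\beta)\circ\ad(w)\circ(\eins\boxtimes\phi)(x)=_{\eps/2}\big(\id\otimes(\ad(w)\circ(\eins\boxtimes\phi))\big)\circ\alpha(x)
\]
and $\|[(\eins\boxtimes\phi)(x),w]\|\le\eps/2+\|\beta\circ\phi(x)-(\id\otimes\phi)\circ\alpha(x)\|$ for all $x\in F$. Next I would set $u=\psi(w)\in(B_\infty)^\sim$, a unitary, and apply $\psi$ to these relations. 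For the commutator estimate one uses that $\psi(\eins\boxtimes\phi(x))=\phi(x)$ (a consequence of $\psi\circ\iota_\beta$ being the standard embedding) and contractivity of $\psi$, giving $\|[\phi(x),u]\|\le\eps/2+\|\beta\circ\phi(x)-(\id\otimes\phi)\circ\alpha(x)\|$. For the first relation one applies $\id\otimes\psi$; on the left-hand side equivariance of $\psi$ rewrites $(\id\otimes\psi)\circ(\Delta\boxtimes\beta)$ as $\beta_\infty\circ\psi$, while on the right-hand side $\psi\circ\ad(w)\circ(\eins\boxtimes\phi)=\ad(u)\circ\phi$. This yields
\[
\beta_\infty(u\,\phi(x)\,u^*)=_{\eps/2}\big(\id\otimes(\ad(u)\circ\phi)\big)\circ\alpha(x)\qquad(x\in F)
\]
inside $M(C^\red(G)\otimes D_{\infty,B})$, where I would use $B\subset B_{\infty,\beta}$ and the compatibility of $\beta_\infty$ with $\id\otimes\psi$ from the preliminaries on (relative) multiplier algebras.

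Finally, I would realise $u$ by genuine unitaries in $\tilde B$: after multiplying $u$ by a scalar of modulus one we may assume $u=\eins+y$ with $y\in B_\infty$; lifting $y$ to a bounded sequence $(y_n)_n$ in $B$ and taking the unitary parts of the polar decompositions of the eventually invertible elements $\eins+y_n\in\tilde B$ produces unitaries $v_n\in\CU(\tilde B)$ with $[(v_n)_n]=u$. Reading the two displayed relations componentwise and passing to the $\limsup$ over $n$ then shows that for all sufficiently large $n$ and all $x\in F$ one has $\beta\circ\ad(v_n)\circ\phi(x)=_\eps\big(\id\otimes(\ad(v_n)\circ\phi)\big)\circ\alpha(x)$ and $\|[\phi(x),v_n]\|\le\eps+\|\beta\circ\phi(x)-(\eins\otimes\phi)\circ\alpha(x)\|$; choosing one such $n$ and setting $v=v_n$ completes the argument. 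The genuinely essential input is the Rokhlin property, which supplies the equivariant splitting $\psi$ that collapses the braided factor $C^\red(G)$ back into the coefficient algebra; the only delicate bookkeeping is pushing the first relation through $\id\otimes\psi$ on the level of multiplier algebras and the unitary lifting at the end, both of which are routine.
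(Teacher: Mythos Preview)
Your proposal is correct and follows essentially the same route as the paper: apply Lemma \ref{pre-existence-1} with $\eps/2$, push the resulting unitary $w\in(C^\red(G)\boxtimes B)^\sim$ through the Rokhlin splitting $\psi$ to obtain a unitary in $(B_\infty)^\sim$, use equivariance of $\psi$ and $\psi(\eins\boxtimes b)=b$ to transfer the two estimates, and then represent by a sequence of unitaries in $\tilde B$ and choose a sufficiently large index. Your explicit unitary-lifting step (adjusting by a scalar phase and taking polar parts) is a bit more detailed than the paper's terse ``represent $v$ by some sequence of unitaries $v_n\in\tilde B$'', but the argument is the same.
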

\begin{proof}
As $\beta$ is assumed to have the spatial Rokhlin property, let 
\[
\psi: (C^\red(G)\boxtimes B,\Delta\boxtimes\beta)\to (B_\infty,\beta_\infty)
\] 
be an equivariant $*$-homomorphism satisfying
\begin{equation} \label{e:pre-ex:4}
\psi(\eins\boxtimes b)=b\quad\text{for all } b \in B.
\end{equation}
We also denote by $\psi$ the canonical extensions to the smallest unitarizations on both sides. Now let $F\fin A$ and $\eps>0$ be given. 
Apply Lemma \ref{pre-existence-1} and choose a unitary $w\in (C^\red(G)\boxtimes B)^\sim$ such that
\begin{equation} \label{e:pre-ex:5}
(\Delta\boxtimes\beta)\circ \ad(w)\circ(\eins\boxtimes\phi)(x) =_{\eps/2} \big( \id\otimes (\ad(w)\circ(\eins\boxtimes\phi)) \big)\circ\alpha(x) 
\end{equation}
and
\begin{equation} \label{e:pre-ex:6}
\| [(\eins\boxtimes \phi)(x), w] \| \leq \eps/2+\|\beta\circ\phi(x)-(\id\otimes\phi)\circ\alpha(x)\|
\end{equation}
for all $x\in F$. Set $v=\psi(w)\in (B_\infty)^\sim \subset \tilde{B}_\infty$.
Combining the equivariance of $\psi$ with \eqref{e:pre-ex:4}, \eqref{e:pre-ex:5} and \eqref{e:pre-ex:6}, we obtain 
\[
\beta_\infty \circ \ad(v)\circ\phi(x) =_{\eps/2} \big( \id\otimes (\ad(v)\circ\phi) \big)\circ\alpha(x)
\]
and
\[
\| [\phi(x), v] \| \leq \eps/2+\|\beta\circ\phi(x)-(\id\otimes\phi)\circ\alpha(x)\|
\]
for all $x\in F$. Now represent $v$ by some sequence of unitaries $v_n\in\tilde{B}$. Then these equations translate to the conditions
\[
\limsup_{n\to\infty}~ \|\beta \circ \ad(v_n)\circ\phi(x) - \big( \id\otimes (\ad(v_n)\circ\phi) \big)\circ\alpha(x)\| \leq \eps/2
\]
and
\[
\limsup_{n\to\infty}~ \| [\phi(x), v_n] \| \leq \eps/2+\|\beta\circ\phi(x)-(\id\otimes\phi)\circ\alpha(x)\|
\]
for all $x\in F$. It follows that for sufficiently large $n$, any of the unitaries $v_n$ satisfies the desired inequalities with respect to $\eps$ in place of $\eps/2$. This finishes the proof.
\end{proof}

\begin{prop}[cf.\ {\cite[3.2]{GardellaSantiago15}}] \label{existence}
Let $G$ be a coexact compact quantum group. Let $\alpha: A\to C^\red(G)\otimes A$ and $\beta: B\to C^\red(G)\otimes B$ be two $G$-actions on separable \cstar-algebras. Assume that $\beta$ has the spatial Rokhlin property. Let $\phi: A\to B$ be a $*$-homomorphism that is equivariant modulo approximate unitary equivalence, i.e.\ $\beta\circ\phi \ueo (\id\otimes\phi)\circ\alpha$ as $*$-homomorphisms between $A$ and $C^\red(G)\otimes B$. Then there exists an equivariant $*$-homomorphism $\psi: (A,\alpha)\to (B,\beta)$ with $\psi\ueo\phi$.
\end{prop}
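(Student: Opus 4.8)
The plan is to run a one-sided approximate intertwining argument in the spirit of Elliott, using Lemma \ref{pre-existence-2} as the single correction step and iterating it. Fix a dense sequence $(a_k)_{k\in\IN}$ in $A$, put $F_n=\{a_1,\dots,a_n\}$, and fix a summable sequence $\eps_n>0$, say $\eps_n=2^{-n}$. I would construct recursively unitaries $v_n\in\tilde{B}$ and $*$-homomorphisms $\phi_n=\ad(v_n)\circ\phi_{n-1}$, starting from $\phi_0=\phi$, with the following properties: each $\phi_n$ is again equivariant modulo approximate unitary equivalence; $\phi_n$ is $\eps_n$-equivariant on $F_n$, meaning $\|\beta\circ\phi_n(x)-(\id\otimes\phi_n)\circ\alpha(x)\|\le\eps_n$ for all $x\in F_n$; and $\|[\phi_{n-1}(x),v_n]\|\le\eps_{n-1}+\eps_n$ for all $x\in F_{n-1}$ (for $n\ge 2$). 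At the $n$-th step this is obtained by applying Lemma \ref{pre-existence-2} to the map $\phi_{n-1}$ with the finite set $F_n$ and tolerance $\eps_n$: its first conclusion gives the $\eps_n$-equivariance on $F_n$, while its commutator estimate yields $\|[\phi_{n-1}(x),v_n]\|\le\eps_n+\|\beta\circ\phi_{n-1}(x)-(\id\otimes\phi_{n-1})\circ\alpha(x)\|$ for every $x\in F_n\supseteq F_{n-1}$, and on $F_{n-1}$ the defect term is at most $\eps_{n-1}$ by the previous step. (At $n=1$ the commutator bound involves the possibly large defect of $\phi$, but this estimate is not needed there.)

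The one point that must be checked to keep the recursion self-sustaining is that $\ad(v)\circ\phi$ is again equivariant modulo approximate unitary equivalence whenever $\phi$ is and $v\in\tilde B$ is unitary. This is a short computation: if unitaries $u_m$ in the multiplier algebra (or unitization) of $C^\red(G)\otimes B$ witness $\beta\circ\phi\ueo(\id\otimes\phi)\circ\alpha$, then the unitaries $(\eins\otimes v)u_m\beta(v)^*$ witness the corresponding statement for $\ad(v)\circ\phi$, since conjugating $\beta\big(\ad(v)\circ\phi(x)\big)=\beta(v)\beta(\phi(x))\beta(v)^*$ by $(\eins\otimes v)u_m\beta(v)^*$ reproduces $(\eins\otimes v)\,u_m\beta(\phi(x))u_m^*\,(\eins\otimes v)^*$, which converges to $(\eins\otimes v)(\id\otimes\phi)(\alpha(x))(\eins\otimes v)^*=(\id\otimes\ad(v)\circ\phi)(\alpha(x))$ by hypothesis. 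Thus the recursion produces the desired data.

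It then remains to pass to the limit. For $m>n\ge k$ one has $\|\phi_m(a_k)-\phi_n(a_k)\|\le\sum_{j=n+1}^m\|[\phi_{j-1}(a_k),v_j]\|\le\sum_{j=n+1}^m(\eps_{j-1}+\eps_j)\to 0$, so $(\phi_n)_n$ converges in point-norm on the dense set $\{a_k\}$; since each $\phi_n$ is a contractive $*$-homomorphism, the limit extends to a $*$-homomorphism $\psi\colon A\to B$ (its image lies in $B$ because each $\phi_n=\ad(v_n\cdots v_1)\circ\phi$ is the conjugate of $\phi$ by a unitary multiplier of $B$). Passing to the limit in $\|\beta\circ\phi_n(x)-(\id\otimes\phi_n)\circ\alpha(x)\|\le\eps_n$ for $x\in F_n$, and using continuity of $\beta$ and of $\id\otimes(-)$, gives $\beta\circ\psi=(\id\otimes\psi)\circ\alpha$ on $\{a_k\}$, hence everywhere, so $\psi\colon(A,\alpha)\to(B,\beta)$ is equivariant. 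Finally, writing $w_n=v_n\cdots v_1\in\tilde B$, for given $F\fin A$ and $\eps>0$ one approximates the elements of $F$ by finitely many $a_k$, chooses $n$ large enough that these $a_k$ lie in $F_n$ and $\|\psi(a_k)-\phi_n(a_k)\|$ is small, and concludes $\|\psi(x)-w_n\phi(x)w_n^*\|<\eps$ for all $x\in F$ by a three-$\eps$ estimate; hence $\psi\ueo\phi$.

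I expect the main obstacle to be purely the bookkeeping that makes the recursion work: one has to feed Lemma \ref{pre-existence-2} a map that is still equivariant modulo approximate unitary equivalence at every stage (handled by the computation in the second paragraph), and one has to arrange that the commutator cost incurred at step $n$ is controlled by the already small equivariance defect produced at step $n-1$, so that the telescoping series $\sum_n\|[\phi_{n-1}(x),v_n]\|$ converges. There is no analytic difficulty beyond what is already packaged into Lemma \ref{pre-existence-2}, and ultimately into the spatial Rokhlin property of $\beta$.
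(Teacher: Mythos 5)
Your proposal is correct and follows essentially the same route as the paper: an iterated application of Lemma \ref{pre-existence-2} along increasing finite sets with a summable tolerance sequence, a telescoping commutator estimate to get point-norm convergence of $\ad(v_n\cdots v_1)\circ\phi$, and passage to the limit for equivariance and $\psi\ueo\phi$. Your explicit check that $\ad(v)\circ\phi$ remains equivariant modulo approximate unitary equivalence is a detail the paper leaves implicit, and it is verified correctly.
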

\begin{proof}
Let 
\[
F_1\subset F_2\subset F_3\subset \dots \fin A
\]
be an increasing sequence of finite subsets with dense union. Let $ (\eps_n)_{n \in \mathbb{N}} $ be a decreasing sequence 
of strictly positive numbers with $\sum_{n=1}^\infty \eps_n < \infty$. Using Lemma \ref{pre-existence-2} we find a 
unitary $v_1\in\tilde{B}$ satisfying
\[
\beta\circ \ad(v_1)\circ\phi(x) =_{\eps_1} \big( \id\otimes (\ad(v_1)\circ\phi) \big)\circ\alpha(x) 
\]
for all $x\in F_1$. Applying Lemma \ref{pre-existence-2} again (but now for $\ad(v_1)\circ\phi$ in place of $\phi$), we find a unitary $v_2\in\tilde{B}$ satisfying
\[
\beta\circ \ad(v_2v_1)\circ\phi(x) =_{\eps_2} \big( \id\otimes (\ad(v_2v_1)\circ\phi) \big)\circ\alpha(x) 
\]
and
\[
\| [(\ad(v_1)\circ\phi)(x), v_2] \| \leq \eps_2+\|\beta\circ\ad(v_1)\circ\phi(x)-(\id\otimes(\ad(v_1)\circ\phi))\circ\alpha(x)\|
\]
for all $x\in F_2$. Applying Lemma \ref{pre-existence-2} again (but now for $\ad(v_2v_1)\circ\phi$ in place of $\phi$), we 
find a unitary $v_3\in\tilde{B}$ satisfying
\[
\beta\circ \ad(v_3v_2v_1)\circ\phi(x) =_{\eps_3} \big( \id\otimes (\ad(v_3v_2v_1)\circ\phi) \big)\circ\alpha(x) 
\]
and
\[
\| [(\ad(v_2v_1)\circ\phi)(x), v_3] \| \leq \eps_3+\|\beta\circ\ad(v_2v_1)\circ\phi(x)-(\eins\otimes(\ad(v_2v_1)\circ\phi))\circ\alpha(x)\|
\]
for all $x\in F_3$. We inductively repeat this process and obtain a sequence of unitaries $v_n\in\tilde{B}$ satisfying
\begin{equation} \label{e:ex:1}
\beta\circ \ad(v_n\cdots v_1)\circ\phi(x) =_{\eps_n} \big( \id\otimes (\ad(v_n\cdots v_1)\circ\phi) \big)\circ\alpha(x) 
\end{equation}
for all $n\geq 1$ and
\begin{equation} \label{e:ex:2}
\begin{array}{rl}
\multicolumn{2}{l}{ \| [(\ad(v_{n-1}\cdots v_1)\circ\phi)(x), v_n] \| } \\ \leq & \eps_n+\|\beta\circ\ad(v_{n-1}\cdots v_1)\circ\phi(x)-(\id\otimes(\ad(v_{n-1}\cdots v_n)\circ\phi))\circ\alpha(x)\|
\end{array}
\end{equation}
for all $x\in F_n$ and $n\geq 2$. 
For $m>n\geq k$ and $x\in F_k$ this implies
\[
\begin{array}{cl}
\multicolumn{2}{l}{ \hspace{-5mm} \|\ad(v_m\cdots v_1)\circ\phi(x)-\ad(v_n\cdots v_1)\circ\phi(x)\| } \\
\leq & \dst \sum_{j=n}^{m-1} \|\ad(v_{j+1}\cdots v_1)\circ\phi(x)-\ad(v_j\cdots v_1)\circ\phi(x)\| \\
=& \dst \sum_{j=n}^{m-1} \big\| \big[ (\ad(v_j\cdots v_1)\circ\phi)(x), v_{j+1} \big] \big\| \\
\stackrel{\eqref{e:ex:1},\eqref{e:ex:2}}{\leq} & \dst \sum_{j=n}^{m-1}~ \eps_{j+1}+\eps_j ~\leq~ 2\cdot\sum_{j=n}^{m} \eps_j. 
\end{array}
\]
As the $\eps_n$ were chosen as a $1$-summable sequence and the union of the $F_n$ is dense, this estimate implies 
that the sequence $\ad(v_n\cdots v_1)\circ\phi(x)$ is Cauchy for every $x\in A$. In particular, the point-norm limit $\psi=\lim_{n\to\infty} \ad(v_n\cdots v_1)\circ\phi$ exists and yields a well-defined $*$-homomorphism from $A$ to $B$. By construction we have $\psi\ueo\phi$, and the equivariance condition
\[
\beta\circ\psi = (\id\otimes\psi)\circ\alpha
\]
follows from \eqref{e:ex:1}. This finishes the proof.
\end{proof}

\begin{lemma} \label{hom-rigid-1}
Let $G$ be a compact quantum group. Let $\alpha: A\to C^\red(G)\otimes A$ and $\beta: B\to C^\red(G)\otimes B$ be two $G$-actions on separable \cstar-algebras. Let $\phi_1,\phi_2: (A,\alpha)\to (B,\beta)$ be two equivariant $*$-homomorphisms. If $\phi_1\ueo\phi_2$, then $\eins\boxtimes\phi_1\ue{G}\eins\boxtimes\phi_2$ as equivariant $*$-homomorphisms from $A$ to $C^\red(G)\boxtimes B$.
\end{lemma}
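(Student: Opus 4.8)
The plan is to transport the hypothesis $\phi_1\ueo\phi_2$ through the trivialization isomorphism of Lemma~\ref{trivialization}. First I would fix unitaries $v_n\in\CU(\tilde B)$ witnessing $\phi_1\ueo\phi_2$, so that $\ad(v_n)\circ\phi_1\to\phi_2$ in point-norm, and recall the $G$-equivariant $*$-isomorphism
\[
T_\beta: \big(C^\red(G)\boxtimes B,\Delta\boxtimes\beta\big)\to\big(C^\red(G)\otimes B,\Delta\otimes\id_B\big)
\]
from Lemma~\ref{trivialization}, extended in the obvious way to the minimal unitarizations, so that $T_\beta(\eins\boxtimes b)=\beta(b)$ for all $b\in B$. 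I would also note at the outset that $\eins\boxtimes\phi_i=\iota_B\circ\phi_i$ is a genuine $G$-equivariant $*$-homomorphism $(A,\alpha)\to(C^\red(G)\boxtimes B,\Delta\boxtimes\beta)$: it is equivariant because $\iota_B$ is, and it takes values in $C^\red(G)\boxtimes B$ rather than merely its multipliers, since $T_\beta(\eins\boxtimes\phi_i(x))=\beta(\phi_i(x))\in C^\red(G)\otimes B$.

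Next I would produce the implementing unitaries. Since $\Delta(\eins)=\eins\otimes\eins$, the element $\eins\otimes v_n$, regarded as a unitary in the minimal unitarization $(C^\red(G)\otimes B)^\sim$, is fixed by the (extended) coaction $\Delta\otimes\id_B$; because $T_\beta$ is $G$-equivariant and unital, $w_n:=T_\beta^{-1}(\eins\otimes v_n)$ is then a unitary in $(C^\red(G)\boxtimes B)^\sim$ which is fixed by $\Delta\boxtimes\beta$. Applying $T_\beta$ and using equivariance of $\phi_1$, I would compute for every $x\in A$ that
\[
T_\beta\big(\ad(w_n)(\eins\boxtimes\phi_1(x))\big)=(\eins\otimes v_n)\,(\id\otimes\phi_1)(\alpha(x))\,(\eins\otimes v_n)^*=\big(\id\otimes(\ad(v_n)\circ\phi_1)\big)(\alpha(x)).
\]
Letting $n\to\infty$, the right-hand side converges to $(\id\otimes\phi_2)(\alpha(x))=\beta(\phi_2(x))=T_\beta(\eins\boxtimes\phi_2(x))$ by equivariance of $\phi_2$, and since $T_\beta$ is isometric this gives $\ad(w_n)(\eins\boxtimes\phi_1(x))\to\eins\boxtimes\phi_2(x)$ for all $x\in A$. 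As the $w_n$ lie in $(C^\red(G)\boxtimes B)^\sim$ and are fixed by $\Delta\boxtimes\beta$, this is precisely the statement $\eins\boxtimes\phi_1\ue{G}\eins\boxtimes\phi_2$.

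The argument is essentially routine; the only step where I would pause to justify something is the convergence in the last paragraph, namely that $\id_{C^\red(G)}\otimes\psi_n\to\id_{C^\red(G)}\otimes\psi$ in point-norm on $C^\red(G)\otimes A$ whenever $\psi_n,\psi:A\to B$ are $*$-homomorphisms with $\psi_n\to\psi$ in point-norm. This I would handle by the standard $\eps/3$-argument: on the dense set of elementary tensors the estimate is immediate, and it propagates to all of $C^\red(G)\otimes A$ using the uniform bound $\|\id\otimes\psi_n\|\le 1$ coming from complete contractivity of the $\psi_n$. Applying this with $\psi_n=\ad(v_n)\circ\phi_1$ and $\psi=\phi_2$, and evaluating at $\alpha(x)$, yields the limit used above. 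I do not anticipate any real obstacle beyond this bookkeeping.
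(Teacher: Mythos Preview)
Your proposal is correct and follows essentially the same approach as the paper: both transport the approximate unitary equivalence through the trivialization isomorphism $T_\beta$ of Lemma~\ref{trivialization}, pull back $\eins\otimes v_n$ to obtain fixed unitaries in $(C^\red(G)\boxtimes B)^\sim$, and use equivariance of $\phi_1,\phi_2$ together with $T_\beta(\eins\boxtimes b)=\beta(b)$ to verify the required convergence. Your additional remarks on why $\eins\boxtimes\phi_i$ lands in $C^\red(G)\boxtimes B$ and on the point-norm convergence of $\id\otimes\psi_n$ are harmless extra bookkeeping.
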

\begin{proof}
Let $u_n\in\CU(\tilde{B})$ be a sequence of unitaries satisfying
\begin{equation} \label{e:pre-rigid}
\ad(u_n)\circ\phi_1 \stackrel{n\to\infty}{\longrightarrow} \phi_2.
\end{equation}
Using Lemma \ref{trivialization}, we consider the equivariant isomorphism
\[
T_\beta: ( C^\red(G)\boxtimes B, \Delta\boxtimes\beta) \to ( C^\red(G)\otimes B, \Delta\otimes\id_B )
\]
that satisfies condition \eqref{e:pre-ex:3}.
We shall also denote by $T_\beta$ the obvious extension to the unitarizations. Set 
\[
v_n=T_\beta^{-1}(\eins\otimes u_n)~ \in \big( C^\red(G)\boxtimes B \big)^\sim.
\] 
As $\eins\otimes u_n$ is in the fixed-point algebra of $\Delta\otimes\id_B$, it follows that $v_n$ is in the fixed-point algebra of $\Delta\boxtimes\beta$. We have
\[
\begin{array}{ccl}
\ad(v_n)\circ(\eins\boxtimes\phi_1) &\stackrel{\eqref{e:pre-ex:3}}{=}& T_\beta^{-1}\circ\ad(\eins\otimes u_n)\circ\beta\circ\phi_1 \\
&=& T_\beta^{-1}\circ\ad(\eins\otimes u_n)\circ (\eins\otimes\phi_1)\circ\alpha \\
&\stackrel{\eqref{e:pre-rigid}}{\longrightarrow}& T_\beta^{-1}\circ(\eins\otimes\phi_2)\circ\alpha \\
&=& T_\beta^{-1}\circ\beta\circ\phi_2 \\
&\stackrel{\eqref{e:pre-ex:3}}{=}& \eins\boxtimes\phi_2.
\end{array}
\]
This shows our claim.
\end{proof}

\begin{prop} \label{ueG pushed forward seq split}
Let $G$ be a coexact compact quantum group. Let $\alpha: A\to C^\red(G)\otimes A$, $\beta: B\to C^\red(G)\otimes B$ and $\gamma: C\to C^\red(G)\otimes C$ be three $G$-actions on separable \cstar-algebras. Let
\[
\phi_1, \phi_2: (A,\alpha)\to (B,\beta),\quad \psi: (B,\beta)\to (C,\gamma)
\]
be equivariant $*$-homomorphisms. Assume that $\psi$ is equivariantly sequentially split. Then $\phi_1\ue{G}\phi_2$ if and only if $\psi\circ\phi_1\ue{G}\psi\circ\phi_2$.
\end{prop}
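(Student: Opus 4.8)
The plan is to prove the two implications separately. The forward direction ``$\phi_1\ue{G}\phi_2\Rightarrow\psi\circ\phi_1\ue{G}\psi\circ\phi_2$'' does not use the sequential splitting at all: if $v_n\in\CU(\tilde B^\beta)$ are unitaries with $\phi_2(x)=\lim_n v_n\phi_1(x)v_n^*$ for all $x\in A$, extend $\psi$ to the canonical unital $*$-homomorphism $\tilde B\to\tilde C$ and observe that equivariance of $\psi$ forces $\psi(B^\beta)\subseteq C^\gamma$, hence $\psi(\tilde B^\beta)\subseteq\tilde C^\gamma$. Then the unitaries $\psi(v_n)\in\CU(\tilde C^\gamma)$ witness $\psi\circ\phi_1\ue{G}\psi\circ\phi_2$, since $\psi$ is a $*$-homomorphism.

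For the converse I would fix a $G$-equivariant approximate left-inverse $\rho\colon(C,\gamma)\to(B_\infty,\beta_\infty)$ of $\psi$, so that $\rho\circ\psi$ is the canonical embedding $B\hookrightarrow B_\infty$ and consequently $\rho\circ\psi\circ\phi_i=\phi_i$, viewed as maps from $A$ into $B_\infty$. Equivariance of $\rho$ forces $\rho(C)\subseteq B_{\infty,\beta}$ and $\rho(C^\gamma)\subseteq(B_{\infty,\beta})^{\beta_\infty}$, and by Lemma~\ref{fixed-point-alg} the latter algebra is canonically identified with $(B^\beta)_\infty$. Extending $\rho$ to the smallest unitarizations, every $w\in\CU(\tilde C^\gamma)$ is sent to a unitary $\rho(w)$ in the unitarization of $(B^\beta)_\infty$, which sits inside $(\tilde B^\beta)_\infty$.

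Now, given $F\fin A$ and $\eps>0$, use $\psi\circ\phi_1\ue{G}\psi\circ\phi_2$ to choose $w\in\CU(\tilde C^\gamma)$ with $\|\psi\phi_2(x)-w(\psi\phi_1(x))w^*\|<\eps$ for all $x\in F$. Applying the contractive $*$-homomorphism $\rho$ and using $\rho\circ\psi\circ\phi_i=\phi_i$, this gives $\|\phi_2(x)-\rho(w)\phi_1(x)\rho(w)^*\|<\eps$ in $B_\infty$ for $x\in F$. Lift $\rho(w)$ to a sequence $(u_n)_n$ of unitaries in $\CU(\tilde B^\beta)$; then for each $x\in F$ the sequence $(\phi_2(x)-u_n\phi_1(x)u_n^*)_n$ represents $\phi_2(x)-\rho(w)\phi_1(x)\rho(w)^*$ in $B_\infty$, so $\limsup_n\|\phi_2(x)-u_n\phi_1(x)u_n^*\|<\eps$. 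Since $F$ is finite, some single $u_n\in\CU(\tilde B^\beta)$ satisfies $\|\phi_2(x)-u_n\phi_1(x)u_n^*\|<\eps$ for all $x\in F$. Running this over an increasing exhaustion $F_1\subseteq F_2\subseteq\cdots\fin A$ with dense union and $\eps_k\to0$ yields a sequence of unitaries in $\CU(\tilde B^\beta)$ witnessing $\phi_1\ue{G}\phi_2$.

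The only genuinely technical point is the unitarization bookkeeping in the converse: one must check that $\rho$ carries $\CU(\tilde C^\gamma)$ into the unitary group of the unitarization of $(B^\beta)_\infty$ inside $(\tilde B^\beta)_\infty$, and that such a unitary lifts to a sequence of unitaries in $\tilde B^\beta$. The first is exactly equivariance of $\rho$ combined with Lemma~\ref{fixed-point-alg}; the second is the standard observation that an element of $\ell^\infty(\tilde B^\beta)$ whose class in $(\tilde B^\beta)_\infty$ is unitary is eventually invertible and can be polar-decomposed componentwise without changing its class. Everything else is routine.
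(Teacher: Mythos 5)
Your proof is correct and follows essentially the same route as the paper: the forward direction by pushing the intertwining unitaries through $\psi$ (equivariance sends $\tilde B^\beta$ into $\tilde C^\gamma$), and the converse by applying an equivariant approximate left-inverse, identifying $(B_{\infty,\beta})^{\beta_\infty}$ with $(B^\beta)_\infty$ via Lemma \ref{fixed-point-alg}, and extracting a suitable member of a representing sequence of unitaries in $\CU(\tilde B^\beta)$. The only difference is that you spell out the unitary-lifting and unitarization bookkeeping that the paper leaves implicit, which is a harmless (and welcome) elaboration.
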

\begin{proof}
If $\phi_1\ue{G}\phi_2$, then clearly $\psi\circ\phi_1\ue{G}\psi\circ\phi_2$. For the converse, assume that $\psi\circ\phi_1\ue{G}\psi\circ\phi_2$. Let $\kappa:(C,\gamma) \to (B_\infty,\beta_\infty)$ be an equivariant approximate left-inverse for $\psi$. Then $\kappa \circ \psi \circ \phi_1 \ue{G} \kappa \circ \psi \circ \phi_1$, or in other words, $\phi_1 \ue{G} \phi_2$ as equivariant $*$-homomorphisms from $A$ 
to $B_{\infty, \beta} $. Given $F \fin A$ and $\eps >0$, we therefore find some $u \in \CU((\widetilde{B_{\infty, \beta}})^{\beta_\infty})$ such that 
\[
\ad(u) \circ \phi_1(a)=_\eps \phi_2(a)\ \text{for all}\ a \in F.
\]
According to Lemma \ref{fixed-point-alg}, we may choose a representing 
sequence $(u_n)_{n \in \mathbb{N}} \subset \CU(\tilde{B}^\beta)$ for $u$. Picking a suitable member of this sequence, we find a 
unitary $v \in \CU(\tilde{B}^\beta)$ such that
\[
\ad(v) \circ \phi_1(a)=_{2\eps} \phi_2(a)\ \text{for all}\ a \in F.
\]
This shows that $\phi_1\ue{G}\phi_2$ as equivariant $*$-homomorphisms from $(A, \alpha)$ to $(B, \beta)$.
\end{proof}

\begin{cor}[cf.\ {\cite[3.1]{GardellaSantiago15}}] \label{hom-rigid-2}
Let $G$ be a coexact compact quantum group. Let $\alpha: A\to C^\red(G)\otimes A$ and $\beta: B\to C^\red(G)\otimes B$ be two $G$-actions on separable \cstar-algebras. Assume that $\beta$ has the spatial Rokhlin property. Let $\phi_1, \phi_2: (A,\alpha)\to (B,\beta)$ be two equivariant $*$-homomorphisms. Then $\phi_1\ueo\phi_2$ if and only if $\phi_1\ue{G}\phi_2$.
\end{cor}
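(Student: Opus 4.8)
The plan is to deduce the statement directly from the two preceding results, Lemma \ref{hom-rigid-1} and Proposition \ref{ueG pushed forward seq split}, using the key observation that the second-factor embedding $\iota_\beta = \eins\boxtimes\id_B$ is exactly the map witnessing the spatial Rokhlin property of $\beta$.

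First, the implication $\phi_1\ue{G}\phi_2 \Rightarrow \phi_1\ueo\phi_2$ is immediate: any sequence of unitaries in $\CU(\tilde{B}^\beta)$ is in particular a sequence of unitaries in $\CU(\tilde{B})$.

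For the converse, suppose $\phi_1\ueo\phi_2$. By Lemma \ref{hom-rigid-1} we obtain $\eins\boxtimes\phi_1\ue{G}\eins\boxtimes\phi_2$ as equivariant $*$-homomorphisms from $(A,\alpha)$ to $(C^\red(G)\boxtimes B, \Delta\boxtimes\beta)$. Now observe that $\eins\boxtimes\phi_i$ factors as $\iota_\beta\circ\phi_i$, where $\iota_\beta = \eins\boxtimes\id_B\colon (B,\beta)\to (C^\red(G)\boxtimes B, \Delta\boxtimes\beta)$ is the canonical second-factor embedding. Since $\beta$ has the spatial Rokhlin property, the map $\iota_\beta$ is $G$-equivariantly sequentially split by Definition \ref{def:rp}.

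We are thus in the situation of Proposition \ref{ueG pushed forward seq split}, applied with $(C^\red(G)\boxtimes B, \Delta\boxtimes\beta)$ in the role of $(C,\gamma)$ and $\iota_\beta$ in the role of the equivariantly sequentially split map. Since $\iota_\beta\circ\phi_1\ue{G}\iota_\beta\circ\phi_2$, that proposition yields $\phi_1\ue{G}\phi_2$, as desired. No independent difficulty remains at this stage: the real content has already been packaged into Lemma \ref{hom-rigid-1} — which handles the passage $\phi_i\rightsquigarrow\eins\boxtimes\phi_i$ via the trivialization isomorphism of Lemma \ref{trivialization} — and into Proposition \ref{ueG pushed forward seq split}, which pulls back approximate $G$-unitary equivalence along an equivariantly sequentially split map using Lemma \ref{fixed-point-alg} to lift unitaries from the fixed-point algebra of the sequence algebra.
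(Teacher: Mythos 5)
Your proposal is correct and follows exactly the same route as the paper's proof: Lemma \ref{hom-rigid-1} gives $\eins\boxtimes\phi_1\ue{G}\eins\boxtimes\phi_2$, the factorization $\eins\boxtimes\phi_i=(\eins\boxtimes\id_B)\circ\phi_i$ together with the Rokhlin property makes the second-factor embedding equivariantly sequentially split, and Proposition \ref{ueG pushed forward seq split} pulls the equivalence back. The only (harmless) addition is your explicit remark on the trivial direction, which the paper leaves unstated.
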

\begin{proof}
We have to show that $\phi_1\ueo\phi_2$ implies $\phi_1\ue{G}\phi_2$. By Lemma \ref{hom-rigid-1}, the amplified $*$-homomorphisms are $G$-approximately unitarily equivalent, that is, $\eins \boxtimes \phi_1 \ue{G} \eins \boxtimes \phi_2$. As $\beta$ has the Rokhlin property, the canonical embedding $\eins \boxtimes \id_B:(B,\beta) \to (C^\red(G) \boxtimes B, \Delta \boxtimes \beta)$ is equivariantly sequentially split. Writing $\eins \boxtimes \phi_i = (\eins \boxtimes \id_B) \circ \phi_i$ for $i=1,2$, an application of 
Lemma \ref{ueG pushed forward seq split} yields $\phi_1\ue{G}\phi_2$. This finishes the proof.
\end{proof}

Here comes the main result of this section, which generalizes analogous results for finite group actions due to Izumi \cite[3.5]{Izumi04}, Nawata \cite[3.5]{Nawata16} and Gardella-Santiago \cite[3.4]{GardellaSantiago15}. It also generalizes the corresponding results 
for finite quantum groups by Osaka-Teruya \cite[10.7]{KodakaTeruya15} and for classical compact groups by 
Gardella-Santiago \cite{GardellaSantiago16}.

\begin{theorem} \label{rigidity Rokhlin}
Let $G$ be a coexact compact quantum group. Let $\alpha, \beta: A\to C^\red(G)\otimes A$ be two $G$-actions on a separable \cstar-algebra. Assume that both have the spatial Rokhlin property. Then $\alpha\ueo\beta$ as $*$-homomorphisms if and only if there exists an equivariant isomorphism $\theta: (A,\alpha)\to (A,\beta)$ which is approximately inner as a $*$-automorphism of $A$.
\end{theorem}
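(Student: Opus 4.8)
The plan is to prove the two implications separately: the reverse implication is a short approximation estimate, while the forward implication assembles Propositions~\ref{intertwining} and~\ref{existence} together with Corollary~\ref{hom-rigid-2} into an Elliott-type intertwining argument.

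For the reverse direction, suppose $\theta\colon (A,\alpha)\to(A,\beta)$ is an equivariant $*$-isomorphism with $\theta\ueo\id_A$; I aim to show $\alpha\ueo\beta$ as $*$-homomorphisms into $C^\red(G)\otimes A$. The starting point is the identity $\beta=(\id\otimes\theta)\circ\alpha\circ\theta^{-1}$, which is a reformulation of the equivariance of $\theta$. Given $F\fin A$ and $\eps>0$, I would first use $\theta\ueo\id_A$ to choose $u\in\CU(\tilde A)$ with $\theta^{-1}(x)=_{\eps/3}u^*xu$ for all $x\in F$, whence $\alpha(\theta^{-1}(x))=_{\eps/3}\alpha(u^*)\alpha(x)\alpha(u)$; then, for the now-fixed finite set $\{\alpha(u^*)\alpha(x)\alpha(u):x\in F\}$, I would use that $\ad(\eins\otimes u')\to\id\otimes\theta$ in point-norm (which follows from $\ad(u')\to\theta$ on $A$) to choose $u'\in\CU(\tilde A)$ with $(\eins\otimes u')\alpha(u^*)\alpha(x)\alpha(u)(\eins\otimes u'^*)=_{\eps/3}\beta(x)$ for all $x\in F$. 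Setting $w=(\eins\otimes u')\alpha(u^*)$, a unitary in $M(C^\red(G)\otimes A)$, gives $\ad(w)\circ\alpha=_\eps\beta$ on $F$, hence $\alpha\ueo\beta$.

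For the forward direction, suppose $\alpha\ueo\beta$. Since $\ueo$ is symmetric, the identity $\id_A$ is equivariant modulo approximate unitary equivalence both as a map $(A,\alpha)\to(A,\beta)$ and as a map $(A,\beta)\to(A,\alpha)$. I would then apply Proposition~\ref{existence} twice: using that $\beta$ has the spatial Rokhlin property, obtain an equivariant $*$-homomorphism $\psi_1\colon(A,\alpha)\to(A,\beta)$ with $\psi_1\ueo\id_A$; using that $\alpha$ has the spatial Rokhlin property, obtain an equivariant $*$-homomorphism $\psi_2\colon(A,\beta)\to(A,\alpha)$ with $\psi_2\ueo\id_A$ (after extending the maps to the minimal unitarisation, the approximating unitaries may be taken in $\CU(\tilde A)$). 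Composing, $\psi_2\circ\psi_1\ueo\id_A$ as equivariant endomorphisms of $(A,\alpha)$ and $\psi_1\circ\psi_2\ueo\id_A$ as equivariant endomorphisms of $(A,\beta)$; applying Corollary~\ref{hom-rigid-2} to $\alpha$ and to $\beta$ respectively upgrades these to $\psi_2\circ\psi_1\ue{G}\id_A$ and $\psi_1\circ\psi_2\ue{G}\id_A$. Finally, Proposition~\ref{intertwining} with $\phi=\psi_1$ and $\psi=\psi_2$ produces an equivariant $*$-isomorphism $\theta\colon(A,\alpha)\to(A,\beta)$ with $\theta\ue{G}\psi_1$; since then $\theta\ueo\psi_1\ueo\id_A$, this $\theta$ is approximately inner as a $*$-automorphism of $A$, as required.

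I do not expect a genuine obstacle, since all the analytic content — the existence of equivariant approximants under the Rokhlin property, the rigidity of approximate unitary equivalence, and the approximate intertwining — is already contained in Propositions~\ref{existence} and~\ref{intertwining} and Corollary~\ref{hom-rigid-2}. The only care needed is bookkeeping: in the reverse direction, fixing $u$ before choosing $u'$ so that the amplified approximation $\ad(\eins\otimes u')\approx\id\otimes\theta$ only has to hold on a predetermined finite set; and in the forward direction, checking that approximate unitary equivalence to the identity is preserved under composition with $\psi_1$ and $\psi_2$, which holds because any $*$-homomorphism $A\to A$ extends to a unital $*$-homomorphism $\tilde A\to\tilde A$ carrying unitaries to unitaries.
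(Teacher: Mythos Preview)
Your proof is correct and follows essentially the same route as the paper: the forward direction is identical (apply Proposition~\ref{existence} twice, upgrade via Corollary~\ref{hom-rigid-2}, and conclude with Proposition~\ref{intertwining}). For the reverse direction the paper gives the one-line chain
\[
\beta \;\ueo\; \beta\circ\theta \;=\; (\id\otimes\theta)\circ\alpha \;\ueo\; \alpha,
\]
which is exactly what your explicit unitary construction verifies by hand; your argument is correct but can be compressed to this. One small correction: your $w=(\eins\otimes u')\,\tilde\alpha(u^*)$ lives in $\CU\big((C^\red(G)\otimes A)^\sim\big)$, not in $M(C^\red(G)\otimes A)$, which is what $\ueo$ requires.
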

\begin{proof}
First assume that $\theta: (A,\alpha)\to (A,\beta)$ is an equivariant $ * $-isomorphism which is approximately inner as a $*$-automorphism. 
Then
\[
\beta\ueo\beta\circ\theta = (\id\otimes\theta)\circ\alpha \ueo \alpha.
\]
Now assume that $\alpha$ and $\beta$ are approximately unitarily equivalent. Then clearly
\[
\beta\circ\id_A=\beta\ueo\alpha=(\id\otimes\id_A)\circ\alpha,
\]
and analogously $\alpha\circ\id_A\ueo(\id\otimes\id_A)\circ\beta$. Since both $\alpha$ and $\beta$ have the spatial Rokhlin property, it follows from Proposition \ref{existence} that there exist equivariant $*$-homomorphisms $\phi_1: (A,\alpha)\to (A,\beta)$ and $\phi_2: (A,\beta)\to (A,\alpha)$ that are both approximately inner as $*$-homomorphisms. Hence Corollary \ref{hom-rigid-2} implies $\phi_1\circ\phi_2\ue{G}\id_A$ and $\phi_2\circ\phi_1\ue{G}\id_A$. According to Proposition \ref{intertwining} we conclude that there exists an equivariant 
$ * $-isomorphism $\theta: (A,\alpha)\to (A,\beta)$ with $\theta\ue{G}\phi_1$. In particular, $\theta$ is also approximately inner as 
a $*$-automorphism.
\end{proof}

To conclude this section, we generalize the $K$-theory formula for fixed-point algebras of Rokhlin actions, which is originally due to Izumi and was recently extended by the first two authors. 

\begin{theorem}[cf.\ {\cite[3.13]{Izumi04} and \cite[4.9]{BarlakSzabo15}}]
Let $G$ be a coexact compact quantum group. Let $\alpha: A\to C^\red(G)\otimes A$ be an action on a separable \cstar-algebra with the spatial Rokhlin property. Then the inclusion $A^\alpha\into A$ is injective in $K$-theory, and its image coincides with the subgroup
\[
K_*(A^\alpha)\cong\set{ x\in K_*(A) \mid K_*(\alpha)(x)=K_*(\eins\otimes\id_A)(x) }.
\]
\end{theorem}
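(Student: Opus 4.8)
The plan is to deduce this $K$-theory formula from the fact, established in the preceding theorem, that the inclusion $\iota\colon A^\alpha \hookrightarrow A$ is a sequentially split $*$-homomorphism, together with the identification $A \cong C^\red(G)\boxtimes A$ provided by Lemma \ref{trivialization} and the sequential split property of $\alpha\colon A \to C^\red(G)\otimes A$. First I would record that since $\iota\colon A^\alpha \to A$ is sequentially split, it follows from \cite[Theorem 2.8]{BarlakSzabo15} (the $K$-theoretic consequence of sequential splitness) that $K_*(\iota)\colon K_*(A^\alpha) \to K_*(A)$ is \emph{split injective}; in particular it is injective, which gives the first assertion. So the remaining content is the computation of the image.

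For the image, the key observation is that one has an equalizer-type diagram on the $C^*$-algebra level. Concretely, consider the two $*$-homomorphisms $\alpha, \eins\otimes\id_A\colon A \to C^\red(G)\otimes A$. The fixed-point algebra $A^\alpha$ is by definition their equalizer, and the point is that this equalizer is respected by $K$-theory because the relevant inclusions are sequentially split. More precisely, I would argue as follows. Using Lemma \ref{trivialization}, the action $\alpha$, viewed as a map $A \to C^\red(G)\otimes A \cong C^\red(G)\boxtimes A$, becomes identified with the second-factor embedding $\iota_\alpha = \eins\boxtimes\id_A$, which by the spatial Rokhlin property is equivariantly sequentially split, hence in particular sequentially split as a plain $*$-homomorphism; therefore $K_*(\alpha)$ is split injective. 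The trivial embedding $\eins\otimes\id_A$ is obviously split injective in $K$-theory as well (it is split by the evaluation at the counit, or by any character of $C^\red(G)$). Now $x \in K_*(A)$ lies in the image of $K_*(A^\alpha)$ if and only if $x$ is ``fixed,'' i.e.\ $K_*(\alpha)(x) = K_*(\eins\otimes\id_A)(x)$: the inclusion ``$\subseteq$'' is immediate since $\alpha\circ\iota = (\eins\otimes\id_A)\circ\iota$ on $A^\alpha$ by definition of the fixed-point algebra.

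The reverse inclusion ``$\supseteq$'' is the heart of the matter, and I expect it to be the main obstacle. The strategy is to exhibit, at the level of $K$-theory, a left inverse to $K_*(\iota)\colon K_*(A^\alpha) \to K_*(A)$ whose composition with $K_*(\iota)$ is the identity and which kills everything outside the ``fixed'' subgroup. A natural candidate is the conditional expectation $E = (\phi\otimes\id)\circ\alpha\colon A \to A^\alpha$, where $\phi$ is the Haar state; this is the map already used in the proof of Lemma \ref{fixed-point-alg}. However, $E$ is only completely positive, not a $*$-homomorphism, so it does not directly induce a $K$-theory map. To get around this, I would instead use the sequentially split structure: let $\psi\colon C^\red(G)\boxtimes A \to A_\infty$ be an equivariant approximate left inverse for $\iota_\alpha$, so that under the trivialization $T_\alpha$ we obtain a (not necessarily equivariant, but this is irrelevant for $K$-theory) $*$-homomorphism $\rho\colon C^\red(G)\otimes A \to A_\infty$ with $\rho\circ\alpha = \iota_{A,\infty}$, where $\iota_{A,\infty}\colon A\hookrightarrow A_\infty$ is the canonical embedding. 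Restricting $\psi$ to the fixed points and invoking Lemma \ref{fixed-point-alg} (which gives $(A^\alpha)_\infty \cong (A_{\infty,\alpha})^{\alpha_\infty}$), and using that $\psi$ is equivariant so maps $(C^\red(G)\boxtimes A)^{\Delta\boxtimes\alpha}\cong A$ into $(A_{\infty,\alpha})^{\alpha_\infty} \cong (A^\alpha)_\infty$, one obtains the commuting triangle $A^\alpha \hookrightarrow A \to (A^\alpha)_\infty$, i.e.\ precisely the statement that $\iota\colon A^\alpha\to A$ is sequentially split with approximate left inverse $\bar\psi\colon A \to (A^\alpha)_\infty$. Now if $x\in K_*(A)$ satisfies $K_*(\alpha)(x) = K_*(\eins\otimes\id_A)(x)$, I would show $K_*(\bar\psi)(x)$, which a priori lies in $K_*((A^\alpha)_\infty)$, actually comes from $K_*(A^\alpha)$ and maps back to $x$ under $K_*(\iota)$. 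The first point follows because $\bar\psi = \iota_{A^\alpha,\infty}\circ(\text{something})$ — more carefully, one uses that $K_*$ of a sequence algebra detects the constant-sequence subalgebra well enough here, exactly as in \cite[4.9]{BarlakSzabo15}; the second point, $K_*(\iota)\circ K_*(\bar\psi)(x) = x$, follows from $\iota_{A,\infty}\circ\iota = \iota\circ(\text{left inverse})$ up to the sequence-algebra identifications, combined with the ``fixed'' hypothesis on $x$ to replace $\bar\psi$ (built from $\alpha$) by the genuine inclusion. Since all the structural inputs — sequential splitness of $\iota$ and of $\alpha$, and the fixed-point sequence algebra identification — are already available, this last computation should proceed in close parallel to the proof of \cite[Theorem 4.9]{BarlakSzabo15}; I would follow that argument essentially verbatim, with $C^\red(G)\boxtimes A$ in place of the classical tensor product.
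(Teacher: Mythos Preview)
Your high-level plan matches the paper's: injectivity of $K_*(\iota)$ comes from sequential splitness of $\iota\colon A^\alpha\hookrightarrow A$, the inclusion $\im K_*(\iota)\subseteq\{x:K_*(\alpha)(x)=K_*(\eins\otimes\id_A)(x)\}$ is immediate, and the hard containment uses the equivariant approximate left inverse $\psi\colon C^\red(G)\boxtimes A\to A_\infty$ together with $T_\alpha$ and Lemma~\ref{fixed-point-alg}. You are also right that the argument should parallel \cite[4.9]{BarlakSzabo15} with the braided tensor product replacing the ordinary one.

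However, your sketch of the hard direction via the $K$-theory map $K_*(\bar\psi)\colon K_*(A)\to K_*((A^\alpha)_\infty)$ does not work as written. The expression ``$K_*(\iota)\circ K_*(\bar\psi)(x)=x$'' is ill-typed: $K_*(\bar\psi)(x)$ lives in $K_*((A^\alpha)_\infty)$ while $K_*(\iota)$ has domain $K_*(A^\alpha)$. More importantly, there is no reason for $K_*(\bar\psi)(x)$ to lie in the image of $K_*(A^\alpha)\to K_*((A^\alpha)_\infty)$; the canonical map $B\to B_\infty$ is in general far from surjective on $K$-theory, and your phrase ``$K_*$ of a sequence algebra detects the constant-sequence subalgebra well enough'' is precisely the point that cannot be taken for granted.

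The paper avoids $K_*((A^\alpha)_\infty)$ altogether and works with projections. One represents $x\in K_0(A)$ by $[p]-[\eins_k]$; the fixed hypothesis gives, after stabilizing, a Murray--von Neumann equivalence between $M_n(\tilde\alpha)(p)$ and $M_n((\eins\otimes\id_A)^\sim)(p)$ in matrices over $(C^\red(G)\otimes A)^\sim$. Transporting via $T_\alpha^{-1}$ and applying $\psi$, one finds that the \emph{constant} projection $p\oplus\eins_m\oplus 0_l$ is MvN equivalent in $M_r(\tilde A)_\infty$ to an element $q_\infty$ which, by equivariance of $\psi$ and Lemma~\ref{fixed-point-alg}, lies in $M_r((A^\alpha)^\sim)_\infty$. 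The decisive step is now \emph{weak stability}: the relation ``partial isometry with range equal to the fixed constant projection $p\oplus\eins_m\oplus 0_l$'' is weakly stable, so a representing sequence for the MvN equivalence produces an honest projection $q\in M_r((A^\alpha)^\sim)$ with $p\oplus\eins_m\oplus 0_l\sim_{\mathrm{MvN}} q$ in $M_r(\tilde A)$. This exhibits $x$ in the image of $K_0(A^\alpha)$; $K_1$ follows by suspension. Your sketch omits this lifting step, which is exactly the mechanism that lets one descend from the sequence algebra back to $A$ without ever invoking $K_*$ of a sequence algebra.
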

\begin{proof}
If $x \in \im(K_*(A^\alpha) \rightarrow K_*(A)) $, then clearly $ K_*(\alpha)(x) = K_*(\eins \otimes \id_A)(x) $.
For the converse, let $ x = [p] - [\eins_k] $ be an element of $ K_0(A) $, where $ p \in M_n(\tilde{A}) $ and 
$ \eins_k \in M_k(\tilde{A}) \subset M_n(\tilde{A}) $ for some $ k \leq n $ such that $ p - \eins_k \in M_n(A) $. 
Let us write 
\[ 
M_n(\tilde{\alpha}): M_n(\tilde{A}) \rightarrow M_n((C^\red(G) \otimes A)^\sim)
\]
for the canonical extension of $ \alpha $ to unitarizations and matrix amplification. 

Similarly, we write $ M_n((\eins \otimes \id_A)^\sim) $ for the extension of $ \eins \otimes \id_A $.
If $ x $ satisfies $ K_0(\alpha)(x) = K_0(\eins \otimes \id_A)(x) $ then 
\[
[M_n(\tilde{\alpha})(p)] - [M_n(\tilde{\alpha})(\eins_k)] = [M_n((\eins \otimes \id_A)^\sim)(p)] 
- [M_n((\eins \otimes \id_A)^\sim)(\eins_k)] 
\] 
in $ K_0(C^\red(G) \otimes A) $. Notice that $ M_n(\tilde{\alpha})(\eins_k) = M_n((\eins \otimes \id_A)^\sim)(\eins_k) $ by definition 
of $ \tilde{\alpha} $, so that we get 
\[
[M_n(\tilde{\alpha})(p)] = [M_n((\eins \otimes \id_A)^\sim)(p)]. 
\] 
By definition of $ K_0 $, we therefore find natural numbers $ m, l $ such that 
\[
M_n(\tilde{\alpha})(p) \oplus \eins_m \oplus 0_l \sim_{\mathrm{MvN}} M_n((\eins \otimes \id_A)^\sim)(p) \oplus \eins_m \oplus 0_l
\]
in $ M_{n + m + l}((C^\red(G) \otimes A)^\sim) $. Using the equivariant isomorphism 
\[ 
T=T_\alpha^{-1} : \big( C^\red(G) \otimes A, \Delta\otimes\id_A \big) \to \big( C^\red(G) \boxtimes A, \Delta\boxtimes\alpha \big)
\] 
from Lemma \ref{trivialization}, we can view this as a relation in $ M_{n + m + l}((C^\red(G) \boxtimes A)^\sim) $. More precisely, using that $ \alpha(a) $ and $ \eins \otimes a $ in $ C^\red(G) \otimes A $ for $ a \in A $ correspond to 
the elements $ \eins \boxtimes a $ and $ T(\eins \otimes a) $ in $ C(G) \boxtimes A $, respectively, we get 
\[
M_n((\eins \boxtimes \id_A)^\sim)(p) \oplus \eins_m \oplus 0_l \sim_{\mathrm{MvN}} M_n(\tilde{T}) \circ M_n((\eins \otimes \id_A)^\sim)(p) \oplus \eins_m \oplus 0_l
\]
in $ M_{n + m + l}((C^\red(G) \boxtimes A)^\sim) $. 
Write $ n + m + l = r $. Since $\alpha$ has the Rokhlin property, let
\[
\psi': \big( C^\red(G) \boxtimes A, \Delta\boxtimes\alpha \big) \to \big( A_\infty, \alpha_\infty \big)
\]
be an equivariant $*$-homomorphism with $\psi'(\eins\boxtimes a)=a$ for all $a\in A$. We then consider
\[
\psi = M_r(\tilde{\psi'}) : M_r((C^\red(G) \boxtimes A)^\sim) \rightarrow M_r(\tilde{A})_\infty,
\] 
which is an approximate left-inverse for $ M_r((\eins \boxtimes \id_A)^\sim) $. Then 
\[
p \oplus \eins_m \oplus 0_l \sim_{\mathrm{MvN}} \psi \circ M_n(\tilde{T}) \circ M_n((\eins \otimes \id_A)^\sim)(p) \oplus \eins_m \oplus 0_l
\]
in $ M_r(\tilde{A})_\infty $.
Note that $ M_n((\eins \otimes \id_A)^\sim)(p) $ is contained in the invariant part of 
$ M_n((C^\red(G) \otimes A)^\sim) $. That is, 
\[
M_n((\Delta \otimes \id_A)^\sim) \circ M_n((\eins \otimes \id_A)^\sim)(p) = M_n((\eins \otimes \eins \otimes \id_A)^\sim)(p) 
\] 
in $ M_n((C^\red(G) \otimes C^\red(G) \otimes A)^\sim) $. 
By equivariance of $ T $ and $ \psi $, the same applies to 
\[ 
q_\infty = \psi \circ M_n(\tilde{T}) \circ M_n((\eins \otimes \id_A)^\sim)(p), 
\] 
that is, the latter element 
satisfies 
\[ 
M_n(\tilde{\alpha}_\infty)(q_\infty) = M_n((\eins \otimes \id_{A_\infty})^\sim)(q_\infty) .
\] 
Now the invariant part of $ M_n(\tilde{A})_\infty $ equals $ M_n((A^\alpha)^\sim)_\infty $ by Lemma \ref{fixed-point-alg}. 
Since the relation of being a partial isometry with a fixed range projection is well-known to be weakly stable, this shows that there exists a projection $ q \in M_r((A^\alpha)^\sim) $ such that 
\[
p \oplus \eins_m \oplus 0_l \sim_{\mathrm{MvN}} q
\]
in $ M_r(\tilde{A}) $. Hence 
\[
x = [p] - [\eins_k] = [p \oplus \eins_m] - [\eins_{m + k}] = [q] - [\eins_{m + k}] 
\]
is contained in $ \im(K_0(A^\alpha) \rightarrow K_0(A)) $ as desired.

For the statement about the $K_1$-group, one uses suspension to reduce matters to $K_0$, see the proof of \cite[4.9]{BarlakSzabo15}.
\end{proof}


\section{Examples}

In this final section we present some examples of Rokhlin actions. 

\begin{example} \label{exregularaction}
Let $ G $ be a coamenable compact quantum group acting on $ A = C(G) $ by the regular coaction $ \alpha = \Delta $. Then $ \alpha $ has 
the spatial Rokhlin property.

Indeed, in this case the embedding $ \iota_A: A \rightarrow C(G) \boxtimes A \cong C(G) \otimes A $ is given 
by $ \iota_A = \Delta $. Since $ G $ is coamenable, the counit $ \epsilon: \CO(G) \rightarrow \mathbb{C} $ extends 
continuously to $ C(G) = C^\red(G) $, and $ \id \otimes \epsilon $ is an equivariant left-inverse for $\iota_A$. Hence 
composition with the canonical embedding of $ C(G) = A $ into $ A_\infty $ yields an equivariant approximate left-inverse. 

The $ * $-homomorphism $ \kappa: C(G) \rightarrow A_\infty $ 
corresponding to this Rokhlin action according to Proposition \ref{char:rp} is induced by the canonical embedding of $ C(G) $ into 
its sequence algebra. 
\end{example}

\begin{rem}
Let $G$ be a finite quantum group and $\alpha: A\to C(G)\otimes A$ an action on a separable, unital \cstar-algebra. 
In \cite{KodakaTeruya15}, Kodaka-Teruya introduce and study the Rokhlin property and approximate representability 
in this setting; in fact they also allow for twisted actions in their paper. 
It follows from 
Proposition \ref{char:ar} that $\alpha$ is spatially approximately representable in the sense of Definition \ref{def:ar} if and only if 
it is approximately representable in the sense of Kodaka-Teruya \cite[Section 4]{KodakaTeruya15}. As a consequence, 
Theorem \ref{duality rp ar} shows that $\alpha$ has the spatial Rokhlin property in the sense of Definition \ref{def:rp}, if and only 
if it has the Rokhlin property in the sense of Kodaka-Teruya \cite[Section 5]{KodakaTeruya15}. In particular, our definitions recover 
Kodaka-Teruya's notions of the Rokhlin property and approximate representability and extend them to the non-unital setting. 
A substantial difference between our approach and  \cite{KodakaTeruya15} is that the duality of these two notions becomes 
a theorem rather than a definition.
\end{rem}

\begin{example} \label{fin-qg-UHF}
Let $ G $ be a finite quantum group of order $ n = \dim(C(G)) $. 
Then $ B = M_n=M_n(\mathbb{C}) \cong \mathbb{K}(L^2(G)) $ is a $G$-$ \yd $-\cstar-algebra with the 
coactions $ \beta: B \rightarrow C(G) \otimes B, 
\gamma: B \rightarrow C^*(G) \otimes B $ given by 
\[
\beta(T) = W^*(\eins \otimes T) W, \qquad \gamma(T) = \hat{W}^*(\eins \otimes T) W^*, 
\]
respectively. 
 
Let us write $ B^{\boxtimes k} = B \boxtimes \cdots \boxtimes B $ for the $ k $-fold braided tensor product. 
Note that the embeddings $ M_n^{\boxtimes k} \rightarrow M_n \boxtimes M_n^{\boxtimes k} 
\cong M_n^{\boxtimes k + 1} $ given by $ T \mapsto \eins \boxtimes T $ are $ G $-equivariant. 
As explained in Remark \ref{induct limit coactions}, we may therefore form the inductive limit action $ \alpha: A \rightarrow C(G) \otimes A $ 
of $ G $ on the corresponding inductive limit $ A $. 
 
We remark that $ A $ can be identified with the UHF-algebra $ M_{n^\infty} $. Indeed, the braided tensor product $ M_n \boxtimes M_n $ 
is easily seen to be isomorphic to the ordinary tensor product $ M_n \otimes M_n $ as a \cstar-algebra, using that 
\[
\begin{array}{ccl}
M_n \boxtimes M_n
&=& [\hat{W}^*_{12} (\eins \otimes M_n \otimes \eins) \hat{W}_{12} \beta(M_n)_{13}] \\ 
&\cong& [(M_n \otimes \eins \otimes \eins)(\id \otimes \beta) \beta(M_n)] \\
&\cong& [(M_n \otimes \eins) \beta(M_n)] \\
&\cong& M_n \otimes M_n.
\end{array}
\]
An analogous statement holds for iterated braided tensor products. 

We obtain a $ G $-equivariant $ * $-homomorphism $ \kappa: C(G) \rightarrow A_\infty $ by setting 
$ \kappa(f) = [(\iota_k(f))_{k \in \mathbb{N}}] $, where $ \iota_k: M_n \rightarrow A $ is the embedding into the $ k $-th braided 
tensor factor of $ A $. Moreover, for $ a \in \iota_m(M_n) \subset A $ and $ f \in C(G) $ we have 
\[
a \iota_k(f) = a_{(-2)} \iota_k(f) S(a_{(-1)}) a_{(0)} 
\]
provided $ k > m $. 
It follows that $ \kappa $ satisfies the commutation relations required by Proposition \ref{char:rp}, 
and the norm condition in Proposition \ref{char:rp} $ b) $ is automatic since $ G $ is a finite quantum group. Hence $ (A, \alpha) $ has 
the spatial Rokhlin property. 
\end{example}

\begin{prop} \label{ssaconj}
Let $G$ be a coexact compact quantum group and $\CD$ a strongly self-absorbing \cstar-algebra. Then there exists at most one conjugacy 
class of $G$-actions on $\CD$ with the spatial Rokhlin property.
\end{prop}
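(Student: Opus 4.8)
The plan is to reduce the statement to the rigidity theorem~\ref{rigidity Rokhlin}. So suppose $\alpha,\beta\colon\CD\to C^\red(G)\otimes\CD$ are two $G$-actions with the spatial Rokhlin property. Since $\CD$ is unital, both $\alpha$ and $\beta$ are unital $*$-homomorphisms. By Theorem~\ref{rigidity Rokhlin} it is enough to prove that $\alpha\ueo\beta$ as $*$-homomorphisms from $\CD$ into $C^\red(G)\otimes\CD$: the theorem then produces an equivariant isomorphism $(\CD,\alpha)\cong(\CD,\beta)$, so $\alpha$ and $\beta$ are conjugate, and as $\alpha,\beta$ were arbitrary this yields at most one conjugacy class.

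To obtain $\alpha\ueo\beta$, I would put $B=C^\red(G)\otimes\CD$ and observe that $B$ is separable (by second countability of $G$), unital, and $\CD$-absorbing, since $B\otimes\CD\cong C^\red(G)\otimes(\CD\otimes\CD)\cong C^\red(G)\otimes\CD=B$. The equivalence is then an instance of the general fact that any two unital $*$-homomorphisms from a strongly self-absorbing \cstar-algebra $\CD$ into a separable, unital, $\CD$-absorbing \cstar-algebra are approximately unitarily equivalent, which I would justify as follows. Let $\lambda_1,\lambda_2\colon\CD\to\CD\otimes\CD$ be the first- and second-factor embeddings $\lambda_1(d)=d\otimes\eins$, $\lambda_2(d)=\eins\otimes d$; since $\CD$ is strongly self-absorbing, the flip automorphism of $\CD\otimes\CD$ is approximately inner, hence $\lambda_1\ueo\lambda_2$. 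Moreover, by the standard description of $\CD$-absorption there is an isomorphism $\Theta\colon B\to B\otimes\CD$ that is approximately unitarily equivalent to the first-factor embedding $\rho_B\colon b\mapsto b\otimes\eins$. Given any unital $*$-homomorphism $\phi\colon\CD\to B$, let $\iota_\CD\colon\CD\to B\otimes\CD$ be $d\mapsto\eins_B\otimes d$; then $\rho_B\circ\phi=(\phi\otimes\id_\CD)\circ\lambda_1$ and $\iota_\CD=(\phi\otimes\id_\CD)\circ\lambda_2$, so precomposing $\Theta\ueo\rho_B$ with $\phi$ and postcomposing $\lambda_1\ueo\lambda_2$ with the unital $*$-homomorphism $\phi\otimes\id_\CD$ gives $\Theta\circ\phi\ueo\rho_B\circ\phi\ueo\iota_\CD$, and therefore $\phi\ueo\Theta^{-1}\circ\iota_\CD$. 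As the right-hand side is independent of $\phi$, any two unital $*$-homomorphisms $\CD\to B$ agree up to $\ueo$; applying this to $\phi=\alpha$ and $\phi=\beta$ completes the reduction.

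The only ingredients beyond routine bookkeeping are the two classical facts about strongly self-absorbing \cstar-algebras invoked above — approximate innerness of the flip on $\CD\otimes\CD$, and the characterisation of $\CD$-absorption by an isomorphism $B\to B\otimes\CD$ that is approximately unitarily equivalent to the first-factor embedding — together with the already established Theorem~\ref{rigidity Rokhlin}. Accordingly I do not expect a genuine obstacle here; should one prefer to avoid invoking those two facts, the same half-flip argument can instead be carried out directly inside the sequence algebra $(C^\red(G)\otimes\CD)_\infty$, but the version above is cleaner.
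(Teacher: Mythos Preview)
Your proof is correct and follows exactly the same route as the paper: reduce to Theorem~\ref{rigidity Rokhlin} by showing that any two unital $*$-homomorphisms $\CD\to C^\red(G)\otimes\CD$ are approximately unitarily equivalent. The paper simply cites this last fact from \cite[Corollary~1.12]{TomsWinter2007}, whereas you unfold the standard half-flip argument for it; otherwise the arguments are identical.
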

\begin{proof}
By \cite[Corollary~1.12]{TomsWinter2007}, any two unital $*$-homomorphisms from $\CD$ to $C^\red(G) \otimes \CD$ are approximately unitarily equivalent. Therefore the claim follows from Theorem \ref{rigidity Rokhlin}.
\end{proof}

\begin{rem}
As a consequence of Proposition \ref{ssaconj} we see that the action of a finite quantum group $G$ of order $n=\dim(C(G))$ 
constructed in Example \ref{fin-qg-UHF} is the unique Rokhlin action of $G$ on $M_{n^\infty}$ up to conjugacy. In particular, it is 
conjugate to the action constructed by Kodaka-Teruya in \cite[Section 7]{KodakaTeruya15}.
\end{rem}

Finally, we shall construct a Rokhlin action of any coamenable compact quantum group on $\CO_2$. As a preparation, 
recall that an element $ a $ in a \cstar-algebra $ A $ is called full if the closed two-sided ideal generated by $ a $ is equal to $ A $. 

\begin{lemma} \label{fullness}
Let $G$ be a coamenable compact quantum group and let $\iota: C(G)\to\CO_2$ be a unital embedding. 
Then $(\id\otimes\iota) \circ \Delta(f)$ is a full element in $C(G)\otimes\CO_2$ for any nonzero $ f \in C(G) $.
\end{lemma}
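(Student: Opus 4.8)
The plan is to use the simplicity of $\CO_2$ to reduce the statement to a purely quantum‑group theoretic assertion about the comultiplication, and then to settle that assertion using ergodicity of the translation coaction. Since $\CO_2$ is simple and nuclear, every closed two‑sided ideal of $C(G)\otimes\CO_2$ has the form $I\otimes\CO_2$ for a closed two‑sided ideal $I\triangleleft C(G)$; in particular the ideal generated by $(\id\otimes\iota)\circ\Delta(f)$ is of this form, so it suffices to check that $(\id\otimes\iota)\circ\Delta(f)$ lies in no ideal $I\otimes\CO_2$ with $I\subsetneq C(G)$. Writing $q_I\colon C(G)\to C(G)/I$ for the quotient map, and using that $\iota$ is injective (hence so is $\id\otimes\iota$ on minimal tensor products) together with exactness of $\CO_2$, membership $(\id\otimes\iota)\Delta(f)\in I\otimes\CO_2$ is equivalent to $(q_I\otimes\id_{C(G)})\circ\Delta(f)=0$. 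Thus the whole statement reduces to the following claim, applied with $\pi=q_I$: for every nonzero $*$-homomorphism $\pi$ out of $C(G)$, the map $(\pi\otimes\id_{C(G)})\circ\Delta$ is injective.

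To prove the claim, set $\Theta=(\pi\otimes\id)\circ\Delta\colon C(G)\to D\otimes C(G)$ and $N=\ker\Theta$, a closed two‑sided ideal of $C(G)$; we must show $N=0$. First, since $G$ is coamenable the counit $\epsilon\colon C(G)\to\IC$ is bounded, and slicing $\Theta(f)=0$ by $\id_D\otimes\epsilon$ gives $\pi(f)=(\pi\otimes\epsilon)\Delta(f)=\pi\big((\id\otimes\epsilon)\Delta(f)\big)=0$; hence $N\subseteq\ker\pi\subsetneq C(G)$. Second, a short computation with coassociativity yields the identity $(\Theta\otimes\id_{C(G)})\circ\Delta=(\id_D\otimes\Delta)\circ\Theta$, so for $f\in N$ we get $(\Theta\otimes\id)(\Delta(f))=(\id\otimes\Delta)(\Theta(f))=0$, i.e.\ $\Delta(f)\in\ker(\Theta\otimes\id_{C(G)})=N\otimes C(G)$, the last equality using that $C(G)$ is exact (which holds, as $C(G)$ is nuclear for coamenable $G$). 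Therefore $N$ is a closed two‑sided ideal invariant under the translation coaction $\Delta$ of $G$ on $C(G)$.

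It remains to observe that this translation coaction is ergodic and that an ergodic coaction of a compact quantum group admits no nonzero proper invariant ideal. Ergodicity is immediate: if $\Delta(a)=a\otimes\eins$ then $a=(\epsilon\otimes\id)\Delta(a)=\epsilon(a)\eins$, so the fixed‑point algebra is $\IC\eins$. For the second point, the averaging map $E=(\id\otimes\phi)\circ\Delta$, with $\phi$ the Haar state of $G$, is a faithful conditional expectation onto the fixed‑point algebra $\IC\eins$ (faithfulness comes from faithfulness of $\phi$ — via faithfulness of the product of faithful states on a minimal tensor product — together with injectivity of $\Delta$; that $E$ maps into the fixed points is the standard computation using right invariance of $\phi$). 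If $N\neq 0$, choosing $0\neq a\in N$ with $a\geq 0$ we get $0\neq E(a)\in N\cap\IC\eins$ since $\Delta(N)\subseteq N\otimes C(G)$, whence $\eins\in N$ and $N=C(G)$, contradicting $N\subsetneq C(G)$. Hence $N=0$, proving the claim and with it the lemma.

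The step I expect to require the most thought is the claim, and within it the point that $\ker\big[(\pi\otimes\id)\circ\Delta\big]$ is invariant under the translation coaction; once this is isolated, the ergodicity argument is routine. Coamenability is used exactly there: through boundedness of $\epsilon$ (for $N\subseteq\ker\pi$ and for ergodicity) and, via nuclearity of $C(G)$, through exactness. All remaining ingredients — the ideal structure of $A\otimes\CO_2$ for simple nuclear $\CO_2$, stability of injectivity under minimal tensor products, and faithfulness of $\id\otimes\phi$ — are standard and can be quoted.
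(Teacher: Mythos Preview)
Your proof is correct and shares the paper's opening move: both use that the closed ideals of $C(G)\otimes\CO_2$ have the form $I\otimes\CO_2$ (simplicity and nuclearity of $\CO_2$), reducing fullness to the claim that $(\pi\otimes\id)\circ\Delta$ is injective for every nonzero $*$-representation $\pi$ of $C(G)$. The difference lies in how this claim is established. The paper dispatches it in one line via the multiplicative unitary: since $\Delta(f)=W^*(\eins\otimes f)W$, one gets $(\pi\otimes\id)(\Delta(f))=(\pi\otimes\id)(W)^*(\eins\otimes f)(\pi\otimes\id)(W)$ in $\IL(\CH_\pi)\otimes C(G)$, which is unitarily conjugate to $\eins\otimes f$ and hence nonzero whenever $f\neq0$. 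Your route is more abstract: you show that the kernel $N$ of $(\pi\otimes\id)\circ\Delta$ is a proper ideal satisfying $\Delta(N)\subseteq N\otimes C(G)$, and then invoke faithfulness of the Haar state (via the averaging map $(\id\otimes\phi)\circ\Delta$) to force $N=0$. Your argument is longer but has the virtue of not appealing to the concrete realization of $\Delta$ through $W$; it would work verbatim for any ergodic coaction with a faithful invariant state. Incidentally, your use of the bounded counit to secure $N\subsetneq C(G)$ is not essential: since $\pi\neq0$ one has $\pi(\eins)\neq0$, whence $(\pi\otimes\id)\Delta(\eins)=\pi(\eins)\otimes\eins\neq0$ already gives $\eins\notin N$.
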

\begin{proof}
Let $ f \in C(G) = C^\red(G) $ be nonzero. To show that $ h = (\id \otimes \iota)(\Delta(f)) $ is full it is enough to verify 
that $ (\pi \otimes \id)(h) $ is nonzero for all irreducible representations $ \pi $ of $ C(G) $. 
Indeed, if the ideal generated by $ h $ is proper, there must exist a primitive ideal of $ C(G) \otimes \CO_2 $ containing $ h $. 
Since $ \CO_2 $ is nuclear and simple these ideals are of the form $ I \otimes \CO_2 $ for primitive ideals $ I \subset C(G) $, 
see \cite[Theorem 3.3]{Blackadar77}.
Now if $ \pi: C(G) \rightarrow \IL(\CH_\pi) $ is any $ * $-representation, then 
\[ 
(\pi \otimes \id)(\Delta(f)) = (\pi \otimes \id)(W)^*(\eins \otimes f)(\pi \otimes \id)(W) 
\] 
is nonzero in $ \IL(\CH_\pi) \otimes C(G) $, and hence 
\[
(\id \otimes \iota)((\pi \otimes \id)(\Delta(f)) = (\pi \otimes \iota)(\Delta(f)) = (\pi \otimes \id)(h)
\]
is nonzero as well since $ \iota $ is injective.
\end{proof}

\begin{theorem}
Let $G$ be a coamenable compact quantum group. Then up to conjugacy, there exists a unique $G$-action on the Cuntz algebra $\CO_2$ with the spatial Rokhlin property.
\end{theorem}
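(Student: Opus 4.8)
The uniqueness statement is immediate from Proposition \ref{ssaconj}: since $ \CO_2 $ is strongly self-absorbing and $ G $, being coamenable, is coexact, there is at most one conjugacy class of $ G $-actions on $ \CO_2 $ with the spatial Rokhlin property. The plan for existence is to produce one such action as an inductive limit; the point is that already a single ``$\CO_2$-step'' suffices to pass from the non-simple algebra $ C(G) \otimes \CO_2 $ to $ \CO_2 $ without destroying the relevant equivariant structure.

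The starting observation is Example \ref{exregularaction}: the regular coaction $ \Delta $ on $ C(G) $ has the spatial Rokhlin property, an equivariant approximate left-inverse of $ \iota_\Delta $ being induced by the counit $ \epsilon $ (bounded since $ G $ is coamenable). Using Lemma \ref{trivialization} to identify $ C(G) \boxtimes (C(G) \otimes \CO_2) $ with $ C(G) \otimes C(G) \otimes \CO_2 $, the same computation shows that $ (C(G) \otimes \CO_2, \Delta \otimes \id_{\CO_2}) $ has the spatial Rokhlin property, witnessed by $ \id_{C(G)} \otimes \epsilon \otimes \id_{\CO_2} $. This algebra is not simple, hence not isomorphic to $ \CO_2 $, and the construction below corrects for that.

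Fix a unital embedding $ \iota \colon C(G) \hookrightarrow \CO_2 $ (which exists because $ C(G) $ is separable, unital and nuclear) and an isomorphism $ \CO_2 \otimes \CO_2 \cong \CO_2 $. Using these, build an inductive system $ B_1 \to B_2 \to \cdots $ with $ B_n = C(G) \otimes \CO_2 $ carrying the coaction $ \beta = \Delta \otimes \id_{\CO_2} $, and with injective, unital, $ G $-equivariant connecting maps assembled from $ \Delta $ and $ \iota $ — concretely, something of the form $ \mu_n(f \otimes x) = f_{(1)} \otimes \iota(f_{(2)}) x $ after the chosen identification $ \CO_2 \otimes \CO_2 \cong \CO_2 $; equivariance of such $ \mu_n $ follows from coassociativity of $ \Delta $, and injectivity from that of $ \Delta $, $ \iota $ and the identification. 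By Remark \ref{induct limit coactions}, the limit $ A = \varinjlim(B_n, \mu_n) $ is a $ G $-\cstar-algebra with coaction $ \alpha = \varinjlim \beta $. Each $ B_n \cong C(G) \otimes \CO_2 $ is separable, unital, nuclear and $ \CO_2 $-absorbing, hence so is $ A $; in particular $ A $ is purely infinite, and $ K_*(A) = \varinjlim K_*(C(G) \otimes \CO_2) = 0 $ because $ K_*(\CO_2) = 0 $. The decisive point is simplicity: by Lemma \ref{fullness}, for every nonzero $ f \in C(G) $ the element $ (\id \otimes \iota)(\Delta(f)) $ is full in $ C(G) \otimes \CO_2 $, so the connecting maps carry nonzero elements of $ C(G) \otimes \eins $ to full elements of $ B_{n+1} $; since every nonzero closed ideal of $ C(G) \otimes \CO_2 $ meets $ C(G) \otimes \eins $, it follows that every nonzero ideal of $ B_n $ generates all of $ B_{n+1} $, and hence that $ A $ is simple. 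Thus $ A $ is a unital Kirchberg algebra with vanishing $ K $-theory, i.e.\ $ A \cong \CO_2 $ by the Kirchberg--Phillips classification theorem.

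What remains, and what I expect to be the main obstacle, is to show that $ \alpha $ inherits the spatial Rokhlin property. This does not follow by merely assembling the equivariantly sequentially split embeddings $ \iota_{B_n} $, since the twisted connecting maps do not intertwine the obvious approximate left-inverses of the second paragraph. The plan is instead to verify the Rokhlin property directly through Proposition \ref{char:rp}(b) — available because $ G $ is coamenable — by producing a unital, $ G $-equivariant $ * $-homomorphism $ \kappa \colon C(G) \to M(D_{\infty,A}) $ with $ a \kappa(f) = \kappa\big(a_{(-2)} f S(a_{(-1)})\big) a_{(0)} $ for all $ f \in \CO(G) $ and $ a \in \CS(A) $, and with $ \|\kappa(S(a_{(-1)})) a_{(0)}\| \le \|a\| $ for all $ a \in \CS(A) $. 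Such a $ \kappa $ should be built from the copies of $ C(G) $ that sit through $ \iota $ inside the $ \CO_2 $-tensor factors of the $ B_n $, by a reindexing in the sequence algebra $ A_\infty $; the shape of the connecting maps must be arranged so precisely that these copies assemble into such a $ \kappa $, while the norm bound comes for free because at each finite stage the relevant element is a value of a $ * $-homomorphism between \cstar-algebras (as in the proof of Proposition \ref{char:rp}(a)). Establishing the commutation relation on all of $ \CS(A) $ while keeping the connecting maps full enough for simplicity is the delicate balancing act at the heart of the argument.
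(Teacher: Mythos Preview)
Your overall strategy coincides with the paper's: uniqueness via Proposition \ref{ssaconj}, existence via an inductive limit whose connecting maps are built from $(\id\otimes\iota)\circ\Delta$, simplicity via Lemma \ref{fullness}, and identification with $\CO_2$ via Kirchberg--Phillips. The cosmetic choice of collapsing $\CO_2^{\otimes n}$ to $\CO_2$ at each stage, versus keeping the building blocks as $C(G)\otimes\CO_2^{\otimes n}$ as the paper does, is immaterial.

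Where you go astray is in the Rokhlin verification, which you leave as a plan and moreover aim at the wrong target. You propose to locate $\kappa$ in the $\iota$-embedded copies of $C(G)$ inside the $\CO_2$-factors and anticipate a ``delicate balancing act''. In fact the correct $\kappa_n: C(G)\to A$ is simply the \emph{first-factor} embedding $f\mapsto f\otimes\eins$ into the $n$-th building block, composed with the canonical map $\Phi_{n,\infty}$ into $A$; the sequence $(\kappa_n)_n$ then represents $\kappa: C(G)\to A_\infty$. These $\kappa_n$ are deliberately incompatible with the connecting maps --- that is precisely why one lands in $A_\infty$ rather than $A$ --- but each satisfies the requirements of Proposition \ref{char:rp}(b) \emph{exactly} on the image of every earlier building block, by the computation you already did in your second paragraph. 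Concretely, for $a=g\otimes y\in\CO(G)\odot\CO_2^{\otimes n}$ one has
\[
\kappa_n\big(a_{(-2)}\,f\,S(a_{(-1)})\big)\,a_{(0)} \;=\; g_{(1)}\,f\,S(g_{(2)})\,g_{(3)}\otimes y \;=\; gf\otimes y \;=\; a\,\kappa_n(f),
\]
while the auxiliary map $a\mapsto\kappa_n(S(a_{(-1)}))a_{(0)}$ is nothing but $\epsilon\otimes\id_{\CO_2^{\otimes n}}$, a genuine $*$-homomorphism by coamenability, so the norm bound is automatic. The paper dispatches this step in one line by reference to Example \ref{fin-qg-UHF}; there is no balancing act, and the ``main obstacle'' you flag is actually the most routine part of the argument.
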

\begin{proof}
According to Proposition \ref{ssaconj} it suffices to construct some $G$-action on $\CO_2$ with the spatial Rokhlin property. 
Since $C^\red(G)=C(G)$ is nuclear, hence in particular exact, there exists a unital embedding $\iota: C(G)\into\CO_2$. 
For every $n\geq 0$, consider the unital $*$-homomorphism $\Phi_n: C(G)\otimes\CO_2^{\otimes n}\to C(G)\otimes\CO_2^{\otimes n+1}$ given by
\[
\Phi_n(x\otimes y)= \big( (\id\otimes\iota)\circ\Delta \big)(x)\otimes y\quad\text{for all}~x\in C(G),~ y\in\CO_2^{\otimes n}.
\]
Notice that $\Phi_n = \Phi_0\otimes\id_{\CO_2^{\otimes n}}$ for all $n\geq 1$. We have
\[
\begin{array}{ccl}
(\Delta\otimes\id_{\CO_2})\circ\Phi_0 &=& (\Delta\otimes\id_{\CO_2})\circ(\id\otimes\iota)\circ\Delta \\
&=& (\id\otimes\id\otimes\iota)\circ (\Delta\otimes\id)\circ\Delta \\
&=& (\id\otimes\id\otimes\iota)\circ (\id\otimes\Delta)\circ\Delta \\
&=& (\id\otimes\Phi_0)\circ\Delta.
\end{array}
\]
This means that $\Phi_0$ is an injective equivariant $*$-homomorphism
\[
\Phi_0: ( C(G), \Delta) \to ( C(G)\otimes\CO_2, \Delta\otimes\id_{\CO_2} ).
\]
We thus also have that each 
\[
\Phi_n: ( C(G)\otimes\CO_2^{\otimes n} , \Delta\otimes\id_{\CO_2^{\otimes n}} ) \to ( C(G)\otimes\CO_2^{\otimes n+1}, \Delta\otimes\id_{\CO_2^{\otimes n+1}} )
\]
is injective and equivariant. 
Define the inductive limit
\[
(A,\alpha) = \lim_{\longrightarrow}~\set{ ( C(G)\otimes\CO_2^{\otimes n} , \Delta\otimes\id_{\CO_2^{\otimes n}} ), \Phi_n },
\]
where $ \alpha $ denotes the inductive limit coaction, compare Remark \ref{induct limit coactions}. 

Notice that each building block in this inductive limit has the Rokhlin property, and moreover the first-factor embedding of $C(G)$ into 
$C(G)\otimes\CO_2^{\otimes n}$ satisfies the required conditions from Proposition \ref{char:rp} $ b) $ on the nose; 
see Example \ref{exregularaction}. Similarly to what happens in Example \ref{fin-qg-UHF}, the sequence 
\[
\kappa_n=\Phi_{n, \infty}\circ(\id\otimes\eins_{\CO_2^{\otimes n}}): C(G)\to A
\]
yields a $*$-homomorphism $\kappa: C(G)\to A_\infty$ satisfying
the conditions in Proposition \ref{char:rp} $ b) $. Hence $\alpha$ has the Rokhlin property.

By Lemma \ref{fullness}, we know that $\Phi_0$, and thus also each $\Phi_n$ is a full $*$-homomorphism. 
It follows that the inductive limit $A$ is simple, and it is clearly separable, unital and nuclear. 
Moreover $A$ is $\CO_2$-absorbing by \cite[3.4]{TomsWinter2007}. 
This implies $A\cong\CO_2$ due to Kirchberg-Phillips \cite{KirchbergPhillips2000}.
\end{proof}


\bibliographystyle{gabor}
\bibliography{rokhlin}

\end{document}